% !TEX encoding = UTF-8 Unicode

\documentclass[a4paper]{article}
\usepackage[utf8]{inputenc}
\usepackage[T1]{fontenc} % for ą
\usepackage{fancyhdr}
\usepackage[colorlinks,linkcolor= black,citecolor=blue]{hyperref}

%%%%%%%%%%%%%%%%%%%%%%%%%%%% Bibliography

\usepackage[style = alphabetic, sorting = nyt, useprefix]{biblatex}
\setcounter{biburlnumpenalty}{9000}

\addbibresource{ref_all.bib}

%%%%%%%%%%%%%%%%%%%%% Packages
\usepackage[title]{appendix}
\usepackage{amsfonts,amsmath,amssymb,amsthm}
\usepackage{enumerate}
\usepackage{bbm} % for bb digits
\usepackage{verbatim}
\usepackage{authblk}
\usepackage{times}
\usepackage{mdframed}
\usepackage{lipsum}
\usepackage{soul}

%%%%%%%%%%%%%%%%%%%%%%%%%%%%%%%% Graphics

\usepackage{graphicx,float}
\usepackage{tikz,tkz-euclide,pgfplots}
\usetikzlibrary{calc,patterns}
\usetikzlibrary{arrows,shapes,positioning}
\setlength{\abovecaptionskip}{-2pt plus 2pt minus 2pt}

%%%%%%%%%%%%%%%%%%%%%%%%%%% Quoting
\usepackage{epigraph} 
\usepackage{csquotes}
\SetBlockThreshold{2}

\setcounter{secnumdepth}{5}

\setcounter{secnumdepth}{6}

%%%%%%%%%%%%%%%%%%%%%%%%%%% Math Operators

\DeclareMathOperator{\Red}{Red}
\DeclareMathOperator{\Int}{Int}

\DeclareMathOperator{\Cay}{Cay}
\DeclareMathOperator{\Cancel}{Cancel}
\DeclareMathOperator{\degg}{deg}
\newcommand{\moinsun}{^{-1}}

\newcommand\flr[1]{\lfloor #1 \rfloor}
\newcommand{\vide}{\textup{\O}}

\newcommand\SetCond[2]{\left\{ #1 \;\middle|\; #2 \right\}}
\newcommand\Pres[2]{\langle #1 | #2 \rangle}
\def\tilde{\widetilde}

%%%%%%%%%%%%%%%%%%%%%%%%%%% Theorems
\newtheorem*{thm*}{Theorem}
\newtheorem*{cor*}{Corollary}

\newtheorem{thm}{Theorem}[section]
\newtheorem{defi}[thm]{Definition}
\newtheorem{lem}[thm]{Lemma}
\newtheorem{defilem}[thm]{Definition-Lemma}

\theoremstyle{remark}
\newtheorem{rem}[thm]{Remark}

\title{Parallel geodesics and minimal stable length of random groups}
\author{\textsc{Tsung-Hsuan Tsai}}
\affil{Institut Camille Jordan, Université Lyon 1\\
\small{tsai@math.univ-lyon1.fr}}
%\affil{MSC2020: }
%\affil{Key words: }
\date{}

\pgfplotsset{compat=1.18}
\begin{document}
\maketitle

\begin{abstract}
We show that for any pair of long enough parallel geodesics in a random group $G_\ell(m,d)$ with $m$ generators at density $d<1/6$, there is a van Kampen diagram having only one layer of faces. Using this result, we give an upper bound, depending only on $d$, of the number of pairwise parallel geodesics in $G_\ell(m,d)$ when $d<1/6$. As an application, we show that the minimal stable length of $G_\ell(m,d)$ at $d<1/6$ is exactly $1$.
\end{abstract}
\tableofcontents

\newpage

\section{Introduction}

\paragraph{Random groups.} A \textit{random group} $G_\ell(m,d)$ with $m\geq 2$ generators at density $d\in[0,1]$, as defined by Gromov in \cite{Gro93}, is given by the presentation \[G = \Pres{x_1,\dots,x_m}{R_\ell},\] where $R_\ell$ is a set of $(2m-1)^{d\ell}$ relators, randomly chosen from the set of cyclically reduced words on $\{x_1^\pm,\dots,x_m^\pm\}$ of length at most $\ell$.

We say that $G_\ell(m,d)$ satisfies some group property \textit{asymptotically almost surely} (denoted a.a.s.) if the probability that the property holds converges to $1$ as the relator length $\ell$ goes to infinity. The first result in the theory of random groups is a phase transition (\cite{Gro93}): at $d>1/2$, a.a.s. $G_\ell(m,d)$ is a trivial group; whereas at $d<1/2$, a.a.s. $G_\ell(m,d)$ is a non-elementary hyperbolic group.

\paragraph{Parallel geodesics.} Two geodesics in (the Cayley graph of) a $\delta$-hyperbolic group are called \textit{parallel} if they do not intersect and their end points are respectively $10\delta$-close. By convention, if the mentioned geodesics are bi-infinite, they share the same boundary points.

In this article, we study van Kampen diagrams between parallel geodesics, referred to as \textit{band diagrams} (Definition-Lemma \ref{band diagram}). A \textit{1-layer face} in a band diagram is a face that touches both geodesics. We first discover that 1-layer faces appear in band diagrams of random groups at density $d<1/4$.

\begin{thm*}[Theorem \ref{parallel diagram in one forth}]
    If $d<1/4$, then a.a.s. for any pair of parallel geodesics $\gamma_1$, $\gamma_2$ of $G_\ell(m,d)$ of lengths at least $\frac{1200}{1-4d}\ell$ and any band diagram $D$ between them, there is a face of $D$ that intersects the two boundary geodesics.
\end{thm*}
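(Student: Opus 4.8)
The plan is to argue by contradiction, pitting a lower bound on the number $|D|$ of faces of $D$ --- forced by the hypothesis that no face touches both $\gamma_1$ and $\gamma_2$ --- against the Gromov--Ollivier isoperimetric inequality for random groups at density $d<1/2$.

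Fix $d<1/4$ and $\varepsilon>0$ with $1-4d-2\varepsilon>0$ (for instance $\varepsilon=(1-4d)/4$). A.a.s.\ the following hold simultaneously: $G_\ell(m,d)$ is $\delta$-hyperbolic with $\delta\le c\ell$ for a constant $c$ depending only on $d$; and every reduced van Kampen diagram $E$ over the presentation satisfies $|\partial E|\ge(1-2d-\varepsilon)\,\ell\,|E|$, where $|E|$ is the number of faces. From now on we argue deterministically on this event. By Definition-Lemma~\ref{band diagram} we may take $D$ reduced, with boundary cycle $\gamma_1\cdot p\cdot\gamma_2^{-1}\cdot q^{-1}$, where the two ``short sides'' $p,q$ join the corresponding endpoints of $\gamma_1$ and $\gamma_2$ and have length at most $10\delta\le 10c\ell$. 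Write $L_i=|\gamma_i|\ge\frac{1200}{1-4d}\ell$.

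For the lower bound, let $\mathcal L_i$ denote the set of faces of $D$ that border $\gamma_i$. Since $\gamma_i\subseteq\partial D$ and $D$ is reduced, every edge of $\gamma_i$ borders exactly one face, so $\sum_{f\in\mathcal L_i}|\partial f\cap\gamma_i|=L_i$. For a single face $f$, write its boundary cycle as an alternation $\alpha_1\beta_1\cdots\alpha_k\beta_k$ in which the $\alpha_j$ are the maximal subpaths lying on $\gamma_i$; by planarity the $\alpha_j$ appear along $\gamma_i$ in the same cyclic order, so the subpath of $\gamma_i$ from the start of $\alpha_1$ to the end of $\alpha_k$ --- which is geodesic --- has length at least $\sum_j|\alpha_j|$ while also being bounded above by $|\beta_k|\le|\partial f|-\sum_j|\alpha_j|$. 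Hence $|\partial f\cap\gamma_i|=\sum_j|\alpha_j|\le|\partial f|/2\le\ell/2$, and therefore $|\mathcal L_i|\ge 2L_i/\ell$. Finally $\mathcal L_1\cap\mathcal L_2=\varnothing$, because a face in the intersection would touch both $\gamma_1$ and $\gamma_2$, i.e.\ would be a $1$-layer face, contrary to assumption. Consequently
\[ |D|\ \ge\ |\mathcal L_1|+|\mathcal L_2|\ \ge\ \frac{2(L_1+L_2)}{\ell}. \]
On the other hand the isoperimetric inequality gives $|D|\le\dfrac{|\partial D|}{(1-2d-\varepsilon)\ell}\le\dfrac{L_1+L_2+20c\ell}{(1-2d-\varepsilon)\ell}$, and combining the two estimates yields $2(1-2d-\varepsilon)(L_1+L_2)\le L_1+L_2+20c\ell$, that is $(1-4d-2\varepsilon)(L_1+L_2)\le 20c\ell$, hence $L_1+L_2\le\frac{40c}{1-4d}\ell$. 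This contradicts $L_1+L_2\ge\frac{2400}{1-4d}\ell$ provided the numerical constant (here $1200$) is chosen large enough compared with $c$ --- a matter of bookkeeping, since $c=c(d)$.

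The conceptual core is the inequality $2>\frac{1}{1-2d}$, valid exactly when $d<1/4$: the absence of $1$-layer faces forces two disjoint layers of faces, which is simply ``too much area'' for the isoperimetric inequality to tolerate once the geodesics are long. I expect the real care to lie not in this count but in the prerequisites invoked at the start --- that a band diagram between parallel geodesics can indeed be taken reduced, with boundary of the stated shape and short sides of length $O(\delta)$ (the role of Definition-Lemma~\ref{band diagram}) --- and in tracking constants so that the $O(\ell)$ contributions of the short sides and of $\varepsilon$ are genuinely dominated by the hypothesis $L_i\ge\frac{1200}{1-4d}\ell$.
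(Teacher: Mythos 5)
Your proof is correct and follows essentially the same route as the paper's: assuming no $1$-layer face, each face meets at most one of the two boundary geodesics and contributes at most $\ell/2$ of its boundary to it, forcing $|D|\gtrsim 4L/\ell$, which contradicts the isoperimetric inequality $|\partial D|\geq(1-2d-\varepsilon)|D|\ell$ since $|\partial D|\approx 2L$. The only deviation is that you bound $|\partial f\cap\gamma_i|\leq|\partial f|/2$ directly even for disconnected intersections (which works, though your cyclic-order bookkeeping tacitly requires choosing the basepoint of $\partial f$ so that $\beta_k$ is the arc spanning the whole configuration; more simply, the two extreme points of $\partial f\cap\gamma_i$ are joined along $\partial f$ by a path of length at most $|\partial f|/2$), whereas the paper first proves that $\partial f\cap\gamma_i$ is connected (Lemma~\ref{connected on boundary}) and then counts maximal arcs.
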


We then show that at $d<1/6$, which corresponds to $C'(1/3)$ in random groups, band diagrams already have only one layer of faces.

\begin{thm*}[Theorem \ref{parallel diagram in one sixth}]
    Let $L\geq10000\ell$. If $d<1/6$, then a.a.s. for any pair of parallel geodesics $\gamma_1, \gamma_2$ of $G_\ell(m,d)$ of lengths at least $L$, there exists a pair of parallel sub-geodesics $\gamma_1',\gamma_2'$ of lengths at least $L-7200\ell$ having a 1-layer band diagram.
\end{thm*}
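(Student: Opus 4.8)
The plan is to analyse one fixed reduced band diagram of minimal area between the two geodesics and to show that, away from a bounded collar near its two ``vertical'' sides, its faces already form a single ladder of $1$-layer faces.

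\emph{Setup.} I would work on the a.a.s.\ event on which $G_\ell(m,d)$ is $\delta$-hyperbolic with $\delta=O(\ell)$, satisfies $C'(1/3)$ (pieces having length $<(2d+o(1))\ell<\ell/3$ because $d<1/6$), has all relators of length at least $(1-o(1))\ell$, and on which the conclusion of Theorem~\ref{parallel diagram in one forth} holds. Fix parallel geodesics $\gamma_1,\gamma_2$ of length at least $L\ge 10000\ell$ and a reduced band diagram $D$ between them of minimal area. Since parallel geodesics stay uniformly $O(\delta)=O(\ell)$-close, I would first prove that the maximal ``vertical arcs'' of $D$ — the portions of face boundaries crossing $D$ from the $\gamma_1$-side to the $\gamma_2$-side — have length $O(\ell)$: a longer one would trace a non-quasigeodesic loop in the Cayley complex and would allow $D$ to be shortened, contradicting minimality. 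Hence cutting $D$ along any such vertical arc yields two band diagrams between parallel sub-geodesics whose new ``side'' has length $O(\ell)$; enlarging the parallel constant from $10\delta$ to some $C\ell$ is harmless, so Theorem~\ref{parallel diagram in one forth} applies to every band sub-diagram of $D$ produced this way as soon as the corresponding sub-arc of $\gamma_1$ has length at least $T:=\tfrac{1200}{1-4d}\ell$; note $T<3600\ell$ since $d<1/6$.

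\emph{Core step.} List the faces of $D$ meeting $\gamma_1$ in order along $\gamma_1$; a face is $1$-layer exactly when it also occurs in the analogous list for $\gamma_2$. First, if two $1$-layer faces are consecutive among the $1$-layer faces of $D$ and the sub-arc of $\gamma_1$ strictly between their $\gamma_1$-arcs has length at least $T$, then the band sub-diagram $M$ trapped between them would contain no $1$-layer face, contradicting Theorem~\ref{parallel diagram in one forth}; so that sub-arc has length $<T$, i.e.\ $1$-layer faces are already $T$-dense along $\gamma_1$. The crucial refinement is to upgrade ``length $<T$'' to ``$M$ empty'', i.e.\ that two consecutive $1$-layer faces are in fact adjacent. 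For this I would run a $C'(1/3)$ disk-diagram analysis on $M$ — a reduced diagram whose boundary consists of two geodesic sub-arcs of $\gamma_1,\gamma_2$ of length $<T$ joined by two vertical arcs of length $O(\ell)$, and which contains no face meeting both geodesics — together with the minimality of $D$ (so that $M$ is area-minimal for its own boundary word) and the near-maximal length $(1-o(1))\ell$ of random relators, and conclude that $M$ is empty: a nonempty such $M$ would have to be at least two faces ``tall'', yet the total available boundary length $\approx 2T+O(\ell)$, the geodesic constraint on its top and bottom arcs, and $C'(1/3)$ together would force a length-decreasing reduction. The same analysis at the two ends of $\gamma_1$ would show that after deleting the faces of $D$ within distance $\approx 3600\ell$ of either vertical side, every remaining face meeting $\gamma_1$ is $1$-layer and consecutive ones are adjacent.

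\emph{Conclusion.} The surviving faces $f_a,\dots,f_b$ then form a chain of pairwise adjacent $1$-layer faces; their $\gamma_1$-arcs tile a sub-geodesic $\gamma_1'$, their $\gamma_2$-arcs tile a sub-geodesic $\gamma_2'$, and $f_a\cup\dots\cup f_b$ is a band diagram between $\gamma_1'$ and $\gamma_2'$ all of whose faces are $1$-layer, i.e.\ a $1$-layer band diagram. Bookkeeping the constants — the $\approx 3600\ell$ discarded at each end, plus the slack from enlarging the parallel constant — would give $|\gamma_i'|\ge L-7200\ell>0$ (using $L\ge 10000\ell$), and $\gamma_1',\gamma_2'$ are parallel, being sub-geodesics of parallel geodesics whose new endpoints are joined by short vertical arcs. \textbf{The main obstacle} is the step upgrading ``length $<T$'' to ``$M$ empty'': at the borderline density $d=1/6$, corresponding to $C'(1/3)$, the usual Greendlinger-type lemmas are vacuous, so ruling out the bounded second-layer pockets that $C'(1/3)$ alone permits must simultaneously use Theorem~\ref{parallel diagram in one forth}, area-minimality, the uniform closeness of parallel geodesics, and the near-maximal length of random relators; arranging the quantitative constants — and controlling how the parallel constant behaves under iterated cutting — is the delicate part.
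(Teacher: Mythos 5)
Your skeleton matches the paper's: use Theorem \ref{parallel diagram in one forth} to find 1-layer faces within $3600\ell$ of each end, show that between two 1-layer faces there can be no non-1-layer face, and take the resulting sub-band-diagram (the $7200\ell$ bookkeeping is the same). But the step you yourself flag as ``the main obstacle'' --- upgrading ``the pocket $M$ between consecutive 1-layer faces is short'' to ``$M$ is empty'' --- is exactly the content of the paper's key Lemma \ref{one layer diagram}, and you do not prove it; you only gesture at a $C'(1/3)$ Greendlinger-type, length-decreasing-reduction argument that, as you correctly note, is vacuous at the borderline $d<1/6$. That is a genuine gap, and your proposed tool is the wrong one: no appeal to area-minimality of $D$, to piece lengths, or to near-maximal relator length is needed or used.

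The paper closes this gap with the isoperimetric inequality for reduced diagrams, $|\partial D'|\geq(1-2d-\varepsilon)|D'|\ell$, which in random groups is far stronger than $C'(1/3)$. Concretely, let $f_1,f_2$ be 1-layer faces bounding a non-empty pocket $D_{int}$ with no 1-layer face, and let $D_{ext}=D_{int}\cup f_1\cup f_2$. If $f_1,f_2$ meet along both geodesics, $D_{int}$ is surrounded by only two faces, contradicting a strengthened condition $\tilde C(3)$ (every simply connected subdiagram of a reduced diagram is surrounded by at least $3$ faces), for which the paper proves a phase transition at density $1/3$. Otherwise one bounds $|\partial D_{ext}|$ (and, in the disjoint case, $|\partial D_{int}|+|\partial D_{ext}|$) from above using only that each pocket face, having no 1-layer face, meets each geodesic in at most half of its boundary --- giving $|s_1|+|s_2|\leq\frac{1}{2}|D_{int}|\ell$ --- and compares with the isoperimetric lower bounds on $D_{int}$ and $D_{ext}$. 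With $|D_{ext}|\geq 3$ (resp.\ $|D_{int}|\geq 2$) this forces $d\geq 1/6-\varepsilon/2$, a contradiction. You should also note that Lemma \ref{connected on boundary} (each face meets a boundary geodesic in a connected arc, valid at $d<1/4$) is needed to make the ``at most half the boundary'' count and the ordering of faces along $\gamma_1$ legitimate; your proposal uses this ordering implicitly without justification.
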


To prove this result, we introduce a new small cancellation condition, called the $\tilde C(p)$ condition (Definition \ref{def C tilde of p}), which is a strong version of the $C(p)$ condition. We then established phase transitions for the $\tilde C(p)$ conditions in random groups (Lemma \ref{C tilde of 2} and Lemma \ref{C tilde of p}).

\paragraph{Canonical representatives.} The notion of \textit{canonical representatives} was introduced by Rips and Sela in \cite{RS95} to solve equations over hyperbolic groups. They then asked whether every Gromov hyperbolic group admits globally stabled canonical representatives. A recent work by Lazarovich and Sageev \cite{LS24} shows that hyperbolic cubulated groups admit such representatives, and it is known from Olivier and Wise \cite{OW11} that random groups at density $d<1/6$ are cubulable, and hence admit globally stabled canonical representatives.

In Appendix of \cite{RS95}, Rips and Sela remarked that van Kampen diagrams between geodesics of a $C'(1/8)$ group are 1-layer diagrams. Using this structure, they constructed globally stabled canonical representatives for $C'(1/8)$ groups. Together with their construction, our Theorem \ref{parallel diagram in one sixth} provides an alternative method for showing that random groups at density $d<1/6$ have globally stabled canonical representatives.

\paragraph{Number of parallel geodesics.} It is known that the number of pairwise parallel bi-infinite geodesics in a hyperbolic group is bounded above by a large number (\cite{Gro87}, \cite[Proposition 3.1]{Del96}). In \cite{GM18}, Gruber and Mackay showed that for a random \textit{triangular} group at density $d\in]1/3,d_{crit}[$, there exists a large $n_0 = n_0(d)$ such that, for any $n\geq n_0$, the $n$-periodic Burnside quotient is infinite. A key ingredient in their proof is providing a much sharper bound on the number of parallel geodesics.

In this article, we provide a sharp upper bound on the number of parallel geodesics in $G_\ell(m,d)$ at density $d<1/6$:
\begin{thm*}[Theorem \ref{number of parallel geodesics}] Let $k\geq 1$ be an integer. Let $G_\ell(m,d)$ be a random group at density $d<1/6$. There exists a number $C=C(d)$ such that, if $\gamma_1,\dots,\gamma_k$ is a set of $k$ parallel geodesics of length at least $C\ell$, then a.a.s.
\[k\leq 2+\frac{4d}{1-6d}.\]
\end{thm*}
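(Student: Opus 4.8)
The plan is to prove the contrapositive: fix an integer $k$ with $k > 2 + \frac{4d}{1-6d}$ — a fixed number once $d<1/6$ is fixed — and show that, for a suitable $C=C(d)$, a.a.s.\ no family of $k$ pairwise parallel geodesics of length at least $C\ell$ exists. So suppose $\gamma_1,\dots,\gamma_k$ were such a family.

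First I would assemble all of them into one diagram. Applying Theorem~\ref{parallel diagram in one sixth} to each consecutive pair $(\gamma_i,\gamma_{i+1})$ produces a $1$-layer band diagram $D_i$ between sub-geodesics of $\gamma_i$ and $\gamma_{i+1}$; since each application costs at most $7200\ell$ at the ends and there are only $k-1$ pairs (a fixed number), choosing $C$ large enough — say $C \ge 10^5(2+\frac{4d}{1-6d})$ — leaves a common window of length at least $C\ell/2$ on which all the $D_i$ are defined. Gluing $D_1,\dots,D_{k-1}$ successively along the intermediate geodesics gives a diagram $D$ bounded by sub-geodesics of $\gamma_1$ and $\gamma_k$ together with two ``vertical'' sides of length $O(k\ell)$, and carrying $\gamma_2,\dots,\gamma_{k-1}$ as the $k-2$ interior ``rungs'' that separate its $k-1$ layers.

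Next I would read off numerical constraints from $D$. By the $1$-layer property, a face $f$ in layer $i$ meets $\gamma_i$ in a single arc $a_i(f)$, meets $\gamma_{i+1}$ in a single arc $a_{i+1}(f)$, and is separated from its horizontal neighbours by ``vertical'' subpaths which, being common subwords of two relators, a.a.s.\ have length $<(2d+\epsilon)\ell$; moreover each $a_i(f)$ is a subword of the geodesic $\gamma_i$, so $|a_i(f)|\le|\partial f|/2$, and, seen from the neighbouring layer, $a_i(f)$ is itself cut into pieces of length $<(2d+\epsilon)\ell$. Writing $|\partial f| = |a_i(f)|+|a_{i+1}(f)|+(\text{two vertical pieces})$, summing over all faces and carrying out the standard edge count, one relates the number $F$ of faces of $D$ to the length $\approx 2C\ell$ of $\gamma_1\cup\gamma_k$, the length $\approx(k-2)C\ell$ of the rungs, and the total vertical-piece length; the piece bounds, sharpened by the $\tilde C(p)$ estimates of Lemma~\ref{C tilde of p} (which is exactly why that condition was introduced), then show $F$ is comparable to $(k-1)C$.

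The last step is to play this against the scarcity of efficient diagrams in $G_\ell(m,d)$: bounding the expected number of combinatorial diagrams of the above shape that are fulfilled by the random relator set, one gets at most $\mathrm{poly}(\ell)\cdot(2m-1)^{-c(d,k)\ell}$ with $c(d,k)>0$ precisely when $k>2+\frac{4d}{1-6d}$, so a.a.s.\ no such $D$, and hence no such family, exists. The main obstacle I foresee is that the glued diagram $D$ need not be reduced along its rungs, so it cannot be fed directly into a generic isoperimetric inequality — a naive attempt to do so even overshoots the claimed constant — and one must instead control, rung by rung, how the vertical pieces on the two sides of $\gamma_i$ interleave along $\gamma_i$, presumably after passing to a reduced localized sub-configuration such as a single vertical column of $k-1$ faces. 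It is the bookkeeping of these interleavings — each rung forcing a loss of order $1-6d$ against a gain of order $4d$, i.e.\ the inequality $(k-2)(1-6d)\le 4d$ — that should produce the sharp value $2+\frac{4d}{1-6d}$.
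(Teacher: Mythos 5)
Your overall strategy --- assemble the band diagrams from Theorem \ref{parallel diagram in one sixth} into one complex and play the cancellation created along the shared geodesics against an isoperimetric-type bound --- is the right one, and your target inequality $(k-2)(1-6d)\le 4d$ is exactly what the paper derives. But there are two genuine gaps.

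The first is quantitative and, as far as I can see, fatal to your version of the construction: you glue only the $k-1$ \emph{consecutive} band diagrams $D_1,\dots,D_{k-1}$ into a stack. The paper instead attaches all $\binom{k}{2}$ band diagrams $D_{ij}$, one for \emph{every} pair $i<j$, along the $k$ geodesics. This is where the sharp constant comes from: each geodesic $\gamma_j$ then carries $k-1$ diagrams, producing about $k(k-2)\,C\ell$ of excess cancellation in total, while the number of faces is only about $\binom{k}{2}\frac{2C}{1-2d}$; feeding this into $\Cancel(Y)-\Red(Y)\le (d+\varepsilon)|Y|\ell$ yields $k(k-2)\le 8\binom{k}{2}\frac{d}{1-2d}$, i.e.\ $(k-2)(1-6d)\le 4d$. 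In your stack each interior rung is covered only twice, so the excess cancellation is only $(k-2)C\ell$ against $(k-1)$ diagrams' worth of faces; running the same accounting gives roughly $(k-2)(1-2d)\le 8d(k-1)$, i.e.\ $(k-2)(1-10d)\le 8d$, which is strictly weaker than the claimed bound and is vacuous for $1/10\le d<1/6$. No amount of sharpening of the piece bounds inside a single layer recovers the lost factor, because the consecutive stack simply never records the parallelism between non-adjacent $\gamma_i$ and $\gamma_j$.

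The second gap is the one you yourself flag as the main obstacle: the glued complex is not reduced along the geodesics, so the isoperimetric input must be the non-reduced inequality $\Cancel(Y)-\Red(Y)\le(d+\varepsilon)|Y|\ell$ (Theorem \ref{inequality for 2-complex}), and one must bound $\Red(Y)$ from above. You leave this unresolved, and your proposed fallback (restricting to a single vertical column of $k-1$ faces) discards the length-$C\ell$ cancellation count that drives the whole argument. The paper's resolution is the dichotomy of edges ``close to / far from the other side'': Lemma \ref{lem reducible can not be close} shows a reducible pair cannot be close to the other side in both of its diagrams, and Lemma \ref{lem estimation far from the other side} bounds the far edges by $4\Cancel(D_{ij})+400\ell$, so $\Red(Y)\le 4\sum\Cancel(D_{ij})+O(k^2\ell)$, which is then absorbed via $\Cancel(D_{ij})\le\frac{d+\varepsilon}{1-2d-\varepsilon}|\partial D_{ij}|$. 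Your final first-moment count over fulfilled diagram shapes is essentially what Theorem \ref{inequality for 2-complex} already packages, so that step is fine in spirit, but without the all-pairs complex and the reduction-degree control the proof does not reach the stated bound.
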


\quad

Let $R, L, N >0$. A hyperbolic group $G=\Pres{X}{R}$ (acting on itself) is $(R,L,N)$-acylindrical if, for every $x,y\in \Cay(X,R)$ with $|x-y|\geq L$, the number of elements $g\in G$ satisfying simultaneously $|gx-x|\leq R$ and $|gy-y|\leq R$ is at most $N$ (\cite[Definition 5.5]{Cou18}). Following Gruber and Mackay's acylindricity result for random triangular groups \cite[Proof of Theorem 1.5 by Proposition 1.3]{GM18}, we have a version for random groups at density $d<1/6$.

\begin{cor*} If $d<1/6$, then for any $a>0$, a.a.s. $G_\ell(m,d)$ is \[(a\ell, C\ell+a\ell, k(C\ell+2a\ell)^2)\]-acylidrical where $k$ and $C$ are from Theorem \ref{number of parallel geodesics}.
\end{cor*}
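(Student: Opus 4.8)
The plan is to deduce acylindricity from the parallel-geodesic count, running the same argument by which Gruber and Mackay pass from their Proposition~1.3 to their acylindricity statement in \cite{GM18}; the only change is to feed in our Theorem~\ref{number of parallel geodesics} in place of theirs. We argue on the intersection of the two a.a.s.\ events that $G_\ell(m,d)$ is $\delta$-hyperbolic with $\delta=O(\ell)$ (available since $d<1/2$) and that the conclusion of Theorem~\ref{number of parallel geodesics} holds; on that event the argument is deterministic and uniform in the pair $(x,y)$, which is all the a.a.s.\ statement needs. So fix $x,y$ in the Cayley graph of $G_\ell(m,d)$ with $|x-y|\ge C\ell+a\ell$ and put
\[
S=\SetCond{g\in G_\ell(m,d)}{|gx-x|\le a\ell\ \text{ and }\ |gy-y|\le a\ell};
\]
we must show $|S|\le k(C\ell+2a\ell)^2$.

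For $g\in S$, consider the translate $\eta_g:=g\cdot[x,y]$, a geodesic from $gx$ to $gy$ of length $|x-y|$. The thin-quadrilateral inequality, applied to the geodesic quadrilateral with sides $[gx,x]$, $[x,y]$, $[y,gy]$, $\eta_g$, shows that $\eta_g$ fellow-travels $[x,y]$ within $a\ell+O(\delta)$ and, away from an initial and a final arc of length $O(a\ell+\delta)$, stays within $O(\delta)$ of $[x,y]$; on that middle part the nearest-point projection $[x,y]\to\eta_g$ is an $O(\delta)$-quasi-isometry. Using this I extract from each $\eta_g$, by a single recipe (project a fixed middle sub-geodesic of $[x,y]$ onto $\eta_g$), a sub-geodesic $\gamma_g\subseteq\eta_g$ of length at least $C\ell$ whose endpoints lie within $O(\delta)$ of a common pair of vertices (so that any two of the $\gamma_g$ have $10\delta$-close endpoints) and such that $g\moinsun\gamma_g$ is always contained in one fixed sub-geodesic $\sigma$ of $[x,y]$ with at most $C\ell+2a\ell$ vertices. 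Here one uses that $C$ is large relative to the implied constants, so that the trimming still leaves length $\ge C\ell$ and $\sigma$ fits inside $[x,y]$. Now take among $\{\gamma_g:g\in S\}$ a subfamily, indexed by a set $I$, that is maximal with respect to being pairwise parallel in the sense of the paper; Theorem~\ref{number of parallel geodesics} forces $|I|\le k$.

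It remains to bound, for each fixed $g_0\in I$, the number of $g\in S$ whose $\gamma_g$ is not parallel to $\gamma_{g_0}$; call the set of such $g$ the fibre over $g_0$. Since all the $\gamma_g$ already have $10\delta$-close endpoints pairwise, $g$ lying in the fibre over $g_0$ forces $\gamma_g\cap\gamma_{g_0}\ne\emptyset$; choose a point $p$ in the intersection. Then $h:=g_0\moinsun g$ sends the point $g\moinsun p$ to the point $g_0\moinsun p$, and since $p\in\gamma_g\cap\gamma_{g_0}$ both of these points lie on $\sigma$. Because $G_\ell(m,d)$ acts freely on its Cayley graph, $h$ is determined by a single point together with its image, so the fibre over $g_0$ has at most $(C\ell+2a\ell)^2$ elements. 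By maximality of $I$, every element of $S$ lies in some fibre over an element of $I$, hence $|S|\le|I|\,(C\ell+2a\ell)^2\le k(C\ell+2a\ell)^2$. As $x,y$ were arbitrary, this is the asserted acylindricity.

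The genuinely delicate point is the propagation of constants: one must carry the hyperbolicity constant $\delta=O(\ell)$ and the $O(a\ell)$ fellow-traveling errors through the extraction of the $\gamma_g$ and through both pigeonhole steps, verifying that the $\gamma_g$ really have length at least $C\ell$ (so Theorem~\ref{number of parallel geodesics} applies with the \emph{same} constant $C$) and that $\sigma$ really has at most $C\ell+2a\ell$ vertices, uniformly over the arbitrary $a>0$. This constant-chasing is precisely what is done in \cite[proof of Theorem 1.5]{GM18}, and it is essentially all that one has to redo in the present setting.
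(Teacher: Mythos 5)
Your overall strategy --- split the translates $g\cdot[x,y]$ into at most $k$ pairwise-parallel classes via Theorem \ref{number of parallel geodesics} and bound each class by $(C\ell+2a\ell)^2$ using freeness of the action --- is exactly the route the paper intends (the paper gives no proof of its own, only the pointer to \cite{GM18}), and your two pigeonhole steps, including the observation that an intersection point determines $g_0\moinsun g$ by freeness of the left action on vertices, are sound. The genuine gap is in the extraction of the $\gamma_g$. For $g,g'\in S$ the endpoints of $\eta_g$ and $\eta_{g'}$ are only $2a\ell$-close, and (by approximating the four endpoints by a tree) the region where the two geodesics come within $O(\delta)$ of each other has length only about $|x-y|-2a\ell-O(\delta)=C\ell-a\ell-O(\delta)$; outside that region they may remain at distance comparable to $a\ell$ from each other. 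Since $a$ is arbitrary and the paper's notion of parallel requires endpoints $10\delta$-close with $\delta\le 10\ell$, once $a$ exceeds an absolute constant there is \emph{no} choice of sub-geodesics of length at least $C\ell$ whose endpoints are pairwise $10\delta$-close. Your assertion that ``$C$ is large relative to the implied constants, so that the trimming still leaves length $\ge C\ell$'' is therefore false: the loss is proportional to $a\ell$, not to $\ell$, so no fixed $C$ absorbs it. This breaks both pigeonhole steps at once, because a disjoint pair of sub-geodesics whose endpoints are not $10\delta$-close is neither parallel (so not controlled by Theorem \ref{number of parallel geodesics}) nor intersecting (so not controlled by the freeness count).

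To repair the argument you must either restrict to $a$ bounded by an absolute constant, so that $a\ell=O(\delta)$ and the whole translates already qualify as parallel candidates, or work under the stronger separation hypothesis $|x-y|\ge C\ell+2a\ell+O(\delta)$ --- which is what the form of the bound $k(C\ell+2a\ell)^2$ suggests is really intended --- so that the pairwise $O(\delta)$-fellow-travelling region has length at least $C\ell$ and its endpoints are automatically $10\delta$-close. State explicitly which of these you are doing rather than deferring the constants to \cite{GM18}: the mismatch between the threshold $C\ell+a\ell$ in the statement and the $2a\ell$ that the geometry demands is precisely the delicate point, not routine bookkeeping.
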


Despite this result, it is not known whether there exists a uniform $n_0 = n_0(m,d)$ such that for $n\geq n_0$, the $n$-periodic Burnside quotient of $G_\ell(m,d)$ with $d<1/6$ is infinite. The main difficulty lies in the fact that the hyperbolicity constant of $G_\ell(m,d)$ is linear in $\ell$ (see \cite{Gro93}, \cite{Oll07} and Theorem \ref{inequality for diagrams}), whereas the hyperbolicity constant of a random triangular group depends solely on its density.

\paragraph{Minimal stable length.} The \textit{minimal stable length} of a hyperbolic group $G$ is the infimum of the stable lengths $[u]^\infty$ among the loxodromic elements $u\in G$. According to \cite[Proposition 3.1]{Del96}, if $G$ is a hyperbolic group with at most $k$ parallel geodesics, then the stable length $[u]^\infty$ of a loxodromic element $u\in G$ is at least $[u]/k$ where $[u]$ is the translation length of $u$. 

As an application of Theorem \ref{number of parallel geodesics}, we show that, in a random group $G_\ell(m,d)$ at density $d<1/6$, a.a.s. if $[u] < \frac{1-3d-3\varepsilon}{2k}\ell$ then $[u]^\infty = [u]$ (Remark \ref{rem stable length}). This leads to the following result:

\begin{thm*}[Theorem \ref{inj rad at 1/6}] If $d<1/6$, then a.a.s. the minimal stable length of $G_\ell(m,d)$ is $1$.
\end{thm*}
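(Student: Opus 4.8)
The plan is to prove the two inequalities $\ell_{\min}^\infty \ge 1$ and $\ell_{\min}^\infty \le 1$ separately, where $\ell_{\min}^\infty$ denotes the minimal stable length over loxodromic elements. The lower bound is the easy direction: in a random group at $d<1/6$, a.a.s. the presentation satisfies the small cancellation condition $C'(1/6)$, hence every non-trivial element has translation length $[u]\ge 1$ (in fact the Cayley graph with the word metric is a geodesic space with integer distances, and a loxodromic isometry of such a space has translation length at least $1$), and since $[u]^\infty = \lim_n [u^n]/n$ with each $[u^n]$ a non-negative integer, one still needs to rule out $[u]^\infty < 1$. This is exactly where Theorem \ref{number of parallel geodesics} enters: by \cite[Proposition 3.1]{Del96}, $[u]^\infty \ge [u]/k$ for the relevant $k=k(d)$, so $[u]^\infty$ is bounded below by a positive constant, but to get the sharp bound $1$ I would invoke Remark \ref{rem stable length}: whenever $[u]$ is small (below the threshold $\frac{1-3d-3\varepsilon}{2k}\ell$) one actually has $[u]^\infty = [u] \ge 1$, and for $[u]$ large the bound $[u]^\infty \ge [u]/k$ is already well above $1$ once $\ell$ is large. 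Combining the two regimes gives $[u]^\infty \ge 1$ for every loxodromic $u$, a.a.s.

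For the upper bound $\ell_{\min}^\infty \le 1$ I must exhibit a loxodromic element $u$ with $[u]^\infty = 1$, and by the previous paragraph it suffices to find one with $[u]=1$, i.e. a generator $x_1$ that acts loxodromically and whose translation length is exactly $1$ (automatically $[x_1]\le |x_1|=1$, so $[x_1]=1$), for then $[x_1]^\infty=[x_1]=1$ provided $[x_1]$ is below the Remark \ref{rem stable length} threshold — which it is, since $1 < \frac{1-3d-3\varepsilon}{2k}\ell$ for $\ell$ large. So the whole content reduces to: a.a.s. $x_1$ is loxodromic, i.e. $x_1$ has infinite order and its axis is a genuine bi-infinite geodesic (equivalently $[x_1]>0$, equivalently the powers $x_1^n$ are not uniformly bounded). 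In a random group at $d<1/2$ this is standard: a.a.s. no relator is a proper power of a short word and, more to the point, the bi-infinite word $\cdots x_1 x_1 x_1 \cdots$ is a geodesic because any van Kampen diagram correcting it would violate the isoperimetric inequality (Theorem \ref{inequality for diagrams}); in particular $[x_1^n]=n$ for all $n$, so $[x_1]=1$ and $[x_1]^\infty=1$.

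The main obstacle is the lower bound argument, specifically bridging the gap between the "small translation length" regime handled by Remark \ref{rem stable length} and the "large translation length" regime: one needs to check that the threshold $\frac{1-3d-3\varepsilon}{2k}\ell$ in Remark \ref{rem stable length} is genuinely $\ge k$ (so that any loxodromic $u$ with $[u]<$ threshold has $[u]^\infty=[u]\ge 1$, while any $u$ with $[u]\ge$ threshold has $[u]^\infty\ge[u]/k\ge 1$), which holds for all large $\ell$ since the threshold grows linearly in $\ell$ while $k=k(d)$ is a fixed constant. Everything else — that $x_1$ is loxodromic with $[x_1]=1$, and that translation lengths in a $C'(1/6)$ group are positive integers — is routine small-cancellation / random-group bookkeeping via the isoperimetric inequality of Theorem \ref{inequality for diagrams}. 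I would also record at the outset that all invoked statements (the $C'(1/6)$ property, Theorem \ref{number of parallel geodesics}, Remark \ref{rem stable length}, Theorem \ref{inequality for diagrams}) each hold a.a.s., and intersect the finitely many corresponding probability-$1$ events.
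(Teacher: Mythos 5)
Your overall architecture matches the paper's: split on the size of the translation length, use Theorem \ref{number of parallel geodesics} to bound the number $k$ of parallel axes permuted by $u$, use Lemma \ref{stable length = translation length} when $u$ fixes an axis, and get the upper bound from a generator ($[x_1]^\infty\le[x_1]\le 1$, which together with the lower bound gives equality — you do not actually need the geodesity of $x_1^n$ for this). But there is a genuine gap at the crux of the lower bound: you invoke Remark \ref{rem stable length} as an input, whereas in the paper that remark is explicitly a \emph{consequence} of the proof of Theorem \ref{inj rad at 1/6} ("According to the proof above\dots"). The statement you are borrowing — that any loxodromic $u$ with $[u]<\frac{1-3d-3\varepsilon}{2k}\ell$ actually \emph{fixes} a geodesic joining $u^{\pm\infty}$, rather than permuting $k\ge 2$ of them nontrivially — is precisely the hard content of the theorem, and without it Delzant's bound only yields $[u]^\infty\ge[u^{k}]/k\ge 1/k$, not $\ge 1$. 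So your plan is circular at the decisive step.

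What the paper does to fill this is the following. Assume $[u]=|u|\le 20\delta\le 200\ell$ and $k\ge 2$, so $u^k$ fixes the parallel geodesics $\gamma$ and $u\gamma\ne\gamma$. Build a band diagram $D$ between segments of $\gamma$ and $u\gamma$ of length $C\ell$, and its translate $D'$ with $\varphi'(D')=u^k\varphi(D)$; glue $D$ and $D'$ along $\gamma$ and $u\gamma$ to form an annular diagram $Y$ with $|Y|=2|D|\ge 4C-1600$ but $|\partial Y|\le 800(k+1)\ell$. For $C$ large the isoperimetric inequality forces $Y$ to be non-reduced, producing faces $f\ne g$ of $D$ with $u^k\varphi(g)=\varphi(f)$. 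The 1-layer structure of $D$ (Theorem \ref{parallel diagram in one sixth}) then forces $|\varphi(g)\cap\gamma|\ge(1-3d-3\varepsilon)\ell/2$ (or the same on $u\gamma$), whence $[u^k]\ge(1-3d-3\varepsilon)\ell/2$ and $[u]^\infty\ge\frac{1-3d-3\varepsilon}{2k}\ell\gg 1$. This dichotomy ($k=1$ gives $[u]^\infty=[u]\ge1$; $k\ge2$ gives $[u]^\infty$ linear in $\ell$) is what your proposal is missing. A minor further point: $d<1/6$ corresponds to $C'(1/3)$, not $C'(1/6)$, though nothing in your argument genuinely uses $C'(1/6)$.
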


\textbf{Acknowledgements.} I would like thank my thesis advisor, Thomas Delzant, for inspiring the idea of parallel geodesics in random groups during my time in Strasbourg. I am also very thankful to Rémi Coulon for inviting me to Dijon and for the valuable discussions we had on this subject.

\section{Preliminary: van Kampen diagrams and random groups}
\paragraph{Van Kampen diagrams.}
We consider edge-oriented combinatorial 2-complexes as described in \cite[\S III.2]{LS77}. A vertex is also called a point or a $0$-cell; a face is called a $2$-cell. Every edge $e$ has a starting point, an ending point and an inverse edge $e\moinsun$. A pair of inverse edges $\{e,e\moinsun\}$ is called an undirected edge or a $1$-cell. For a 2-complex $Y$, denote by $Y^{(i)}$ the set of $i$-cells of $Y$ (for $i=0,1,2$) and $|Y| = |Y^{(2)}|$ the number of faces of $Y$. For simplicity, if $e$ is an (oriented) edge of $Y$, we also write $e \in Y^{(1)}$; similarly, if $f$ is a face of $Y$, we write $f \in Y$.

A \textit{path} is a sequence of edges $p = e_1\dots e_n$ such that the ending point of $e_i$ is the starting point of $e_{i+1}$ for each $i$. A path is called \textit{reduced} if it has no subpath of type $ee\moinsun$. A \textit{loop} is a path that ends at its starting point. A loop is \textit{cyclically reduced} if it is a reduced loop with the additional requirement that $e_n\neq e_1\moinsun$. A path (or a loop) is called \textit{simple} if it does not cross itself.

In this article, we introduce an additional structure on 2-complexes for convenience: every face $f$ is assigned a starting point and an orientation on its boundary, referred to as the \textit{boundary loop} and denoted by $\partial f$.\\

Let $G = \Pres{X}{R}$ be a finite group presentation where every relator $r\in R$ is cyclically reduced. The \textit{Cayley complex} $\Cay(X,R)$ of $\Pres{X}{R}$ is the 2-complex constructed from its Cayley graph by attaching a face on each relator loop. Every edge of $\Cay(X,R)$ is labeled by the generator it represents, and every face is naturally labeled by the relator read on its boundary loop.

\begin{defi}
    A \textup{diagram} with respect to the group presentation $\Pres{X}{R}$ is a finite connected 2-complex $Y$ with labels on edges by generators and labels on faces by relators such that:
    \begin{itemize}
        \item If an edge $e$ is labeled by $x\in X$, then its inverse $e\moinsun$ is labeled by $x\moinsun$.
        \item If a face $f$ is labeled by $r\in R$, then its boundary loop is labeled by the word $r$.
    \end{itemize}
\end{defi}

Since the relators are cyclically reduced, every boundary loop of is a reduced loop. For every diagram $Y$, we associate a 2-complex morphism $\varphi_Y : Y\to \Cay(X,R)$ that maps $i$-cells to $i$-cells (for $i = 0,1,2$) and preserves the labels on edges and faces. The map $\varphi_Y$ is unique up to label-preserving isometries on $\Cay(X,R)$.\\

A pair of faces in a diagram is called \textit{reducible} if they are labeled by the same relator and their boundary loops share a common edge at the same respective position with the same orientation (Figure \ref{reducible faces}; the two faces share their fourth edge with the same orientation). A diagram is called \textit{reduced} if it contains no reducible pair of faces. Equivalently, a diagram is reduced if the map $\varphi_Y$ is locally injective on $Y\setminus Y^{(0)}$.

\begin{figure}[h]
    \centering
    \begin{tikzpicture}
        \fill[gray!20] (0,0) -- (0,1) --+ (30:1) -- (30:2);
        \fill[gray!20] (0,0) -- (-30:1) --+ (30:1) -- (30:2);
        \fill[gray!20] (0,0) -- (0,1) --+ (150:1) -- (150:2);
        \fill[gray!20] (0,0) -- (210:1) --+ (150:1) -- (150:2);
        \draw[very thick] (0,0) -- (0,1) --+ (30:1) -- (30:2);
        \draw[very thick] (0,0) -- (-30:1) --+ (30:1) -- (30:2);
        \draw[very thick] (0,1) --+ (150:1) -- (150:2);
        \draw[very thick] (0,0) -- (210:1) --+ (150:1) -- (150:2);
        \draw [thick, ->] (30:2) arc (0:-30:1.5);
        \draw [thick, ->] (150:2) arc (180:210:1.5);
        \fill (30:2) circle (0.07);
        \fill (150:2) circle (0.07);
        \node at (30:1) {$r$};
        \node at (150:1) {$r$};
    \end{tikzpicture}
    \caption{A reducible pair of faces.}
    \label{reducible faces}
\end{figure}
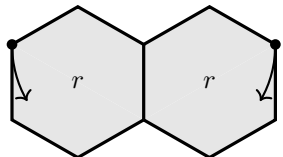

The \textit{reduction degree} of $Y$ is the total number of edges causing reducible pairs of faces, counted with \textit{multiplicity}: for any edge $e\in Y^{(1)}$, any relator $r\in R$, and any integer $j$, we count the number of faces $f\in Y$ labeled by $r$ that have $e$ as the $j$-th boundary edge. If this number is $t$, we add $(t-1)^+$ to the reduction degree, where $(\cdot)^+$ denotes the positive part function. Here is a formal definition.

\begin{defi}[Reduction degree, {\cite[Definition 2.5]{GM18}}]\label{reduction degree}
    Let $Y$  be a diagram of a group presentation $G = \langle X|R\rangle$. Denote by $\varphi_2: Y^{(2)}\to R$ the labeling function on faces. Let $\ell$ be the maximal boundary length of faces of $Y$. 

    The \textup{reduction degree} of $Y$ is defined as
    \[\Red(Y) := \sum_{e\in Y^{(1)}}\sum_{r\in R}\sum_{1\leq j\leq \ell}\Big(\big|\{f\in Y \,|\, \varphi_2(f) = r, e \textup{ is the $j$-th edge of } \partial f\}\big| - 1 \Big)^+.\]
\end{defi}

Note that $Y$ is reduced if and only if $\Red(Y) = 0$.\\

A \textit{van Kampen diagram} $D$ is a simply connected diagram that is embedded in the Euclidean plane. Up to cyclic permutations and inversion, the boundary loop of $D$ (which may not be reduced) is denoted by $\partial D$. A \textit{disk diagram} is a van Kampen diagram that is homeomorphic to a disk. In this case, the boundary loop is simple. Here are two useful results on van Kampen diagrams.

\begin{thm}[Van Kampen's lemma \cite{vK33}]
    Let $G = \Pres{X}{R}$ be a finitely presented group. A word $w$ of $X^\pm$ is trivial in $G$ if and only if there exists a reduced van Kampen diagram $D$ such that a chosen boundary loop $\partial D$ is labeled by the word $w$.
\end{thm}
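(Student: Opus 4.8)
The plan is to prove the two implications separately, treating the ``only if'' direction (every trivial word bounds a \emph{reduced} diagram) as the substantial one; the ``if'' direction holds whether or not the diagram is reduced.

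\textbf{The ``if'' direction.} Suppose $D$ is a van Kampen diagram whose boundary loop $\partial D$ is labeled by $w$, based at a vertex $v_0$. Using the freedom in the definition of $\varphi_D$ (it is determined only up to the label-preserving $G$-action on $\Cay(X,R)$), I would normalize so that $\varphi_D(v_0)$ is the identity vertex. Since $\partial D$ is a loop in $D$ and $\varphi_D$ is a label-preserving morphism, its image $\varphi_D(\partial D)$ is a \emph{closed} path in $\Cay(X,R)$ based at the identity and spelling $w$. But the terminal vertex of the $w$-labeled path issuing from the vertex $g$ is exactly $gw$, so a closed $w$-path based at the identity forces $w=1$ in $G$. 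Equivalently, one can induct on $|D|$, peeling off a face that meets $\partial D$, reading its label $r\in R$, and writing $w$ (up to cyclic permutation) as $u\,r^{\pm1}\,u\moinsun\,w'$, where $w'$ is the boundary word of the diagram with that face deleted and its spurs pruned; this exhibits $w$ as a member of the normal closure $\langle\langle R\rangle\rangle$, again giving $w=1$ in $G$.

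\textbf{The ``only if'' direction, construction.} Assume $w=1$ in $G$, so by definition $w\in\langle\langle R\rangle\rangle$ and we may write, as an identity in the free group $F(X)$,
\[ w \;=\; \prod_{i=1}^{n} u_i\, r_i^{\varepsilon_i}\, u_i\moinsun, \qquad u_i\in F(X),\ r_i\in R,\ \varepsilon_i\in\{\pm1\}. \]
From this factorization I would build a (not necessarily reduced) van Kampen diagram by the standard ``bouquet of lollipops'' construction: for each $i$ form a segment labeled $u_i$ (the stick) terminating in a single face labeled by $r_i$ traversed with sign $\varepsilon_i$ (the candy), then wedge the $n$ lollipops at a common basepoint. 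Reading around the boundary of this planar, simply connected complex produces the literal concatenation $u_1 r_1^{\varepsilon_1} u_1\moinsun\cdots u_n r_n^{\varepsilon_n} u_n\moinsun$, which equals $w$ in $F(X)$ but perhaps not letter-for-letter. I would reconcile the two by realizing the intervening free cancellations and insertions diagrammatically as the removal or addition of spurs (degree-one ``hairs''): these operations keep the complex a valid van Kampen diagram and change its boundary only by free reduction, so after them $\partial D$ is labeled exactly by the given word $w$.

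\textbf{Making the diagram reduced.} The diagram so far may satisfy $\Red(D)>0$. To finish I would iterate a reduction step: whenever two faces form a reducible pair---labeled by the same relator and sharing a boundary edge at the same respective position with the same orientation, as in Figure \ref{reducible faces}---I excise them by the classical cut-and-paste (diamond) move, which removes two faces, keeps $D$ a simply connected planar diagram, and alters the boundary word only up to free reduction (again absorbed into spurs as above). Each move strictly decreases $|D|$, so the process terminates; at termination no reducible pair remains, i.e.\ $\Red(D)=0$, and $D$ is the desired reduced van Kampen diagram with $\partial D$ labeled by $w$. I expect this last step to be the main obstacle: one must verify that the cut-and-paste can always be carried out within the plane without breaking simple connectivity, and that the spur bookkeeping genuinely restores the \emph{exact} word $w$ rather than merely a freely equivalent one. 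Both points are standard---they are the content of the reduction lemma in \cite[\S V]{LS77}---but they are where the real work lies.
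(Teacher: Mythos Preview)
The paper does not actually prove this statement: van Kampen's lemma is quoted as a classical result with a citation to \cite{vK33} and no proof is given. So there is nothing to compare against on the paper's side.

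Your sketch is the standard argument (essentially the one in \cite[\S V]{LS77}): build a bouquet of lollipops from a normal-form factorization, fold to realize free cancellations as spurs, then iterate diamond moves to eliminate reducible pairs. It is correct in outline and you have identified the only delicate point, namely that the cut-and-paste keeps the diagram planar and simply connected while preserving the boundary word up to spur bookkeeping. One small remark: in the ``if'' direction, the existence of $\varphi_D$ as a global map to $\Cay(X,R)$ already presupposes simple connectivity of the Cayley complex, which in turn encodes the fact that relator-bounded loops are trivial; the cleaner self-contained argument is the inductive peeling you mention second, which exhibits $w$ directly as a product of conjugates of relators without invoking $\varphi_D$.
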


\begin{thm}[\cite{Gro87}]
    A finitely presented group $G = \Pres{X}{R}$ is Gromov hyperbolic if and only if there exists a number $C>0$ such that every reduced van Kampen diagram of $G$ satisfies the isoperimetric inequality
    \[|\partial D| \leq C|D|.\]
\end{thm}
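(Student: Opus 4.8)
The statement is the classical linear isoperimetric characterisation of hyperbolicity; I would prove the two implications separately and expect the converse (isoperimetric inequality $\Rightarrow$ hyperbolic) to be the real difficulty.

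\emph{Hyperbolicity $\Rightarrow$ linear isoperimetric inequality.} Assuming $\Cay(X,R)$ is $\delta$-hyperbolic, the plan is to replace $R$ by a \emph{Dehn presentation}. Fix a constant $N = N(\delta)$ to be chosen, and let $R_0$ be the (finite) set of all words over $X^\pm$ of length at most $N$ that are trivial in $G$. The key claim is that $\langle X \mid R_0 \rangle$ is a Dehn presentation: every nonempty cyclically reduced word $w$ trivial in $G$ contains a subword $u$ with $2|u| > |r|$ for some $r = u v^{-1} \in R_0$. For $|w| \le N$ this is immediate, since $w$ is a cyclic conjugate of an element of $R_0$. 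For $|w| > N$, I would read $w$ as a closed loop $c$ of length $|w|$ in $\Cay(X,R)$: if \emph{no} subarc of $c$ of length $\le 8\delta + 1$ failed to be a geodesic, then $c$ would be an $(8\delta+1)$-local geodesic, hence a $(\lambda, \varepsilon)$-quasigeodesic with $\lambda, \varepsilon$ depending only on $\delta$ by the local-to-global principle for local geodesics in hyperbolic spaces; but stability of quasigeodesics forces a closed $(\lambda,\varepsilon)$-quasigeodesic to have length at most some $n_0(\delta)$, so taking $N \ge \max(n_0(\delta),\, 16\delta + 2)$ produces a non-geodesic subarc $u$ with $|u| \le 8\delta + 1$, and pairing $u$ with a geodesic between its endpoints yields the required $r \in R_0$ with $|r| < 2|u|$. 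With a Dehn presentation in hand, Dehn's algorithm on a null-homotopic word $w$ strictly decreases length at every step, and recording the successive relator substitutions assembles a van Kampen diagram with at most $|w|$ faces; this gives the isoperimetric inequality (the form for arbitrary reduced diagrams being the standard strengthening in the hyperbolic setting).

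\emph{Linear isoperimetric inequality $\Rightarrow$ hyperbolicity.} This is where the main obstacle lies. Granting $|D| \le C|\partial D|$ for all reduced diagrams --- equivalently, a linear bound on the Dehn function --- the plan is to show that geodesic triangles in $\Cay(X,R)$ are uniformly thin. I would first reduce, via Papasoglu's criterion, to uniformly thin geodesic \emph{bigons}: given two geodesics $\alpha, \beta$ with common endpoints, the loop $\alpha \beta^{-1}$ has length about $2|\alpha|$ and so bounds a reduced diagram $D$ with $|D| \le C|\partial D| = O(|\alpha|)$. The crux is a ``corridor''/divergence estimate inside $D$: if $\alpha$ and $\beta$ stayed at distance $\ge M$ over a sub-stretch of length $\ge T$, one cuts out a sub-diagram whose two long sides remain far apart and which must therefore contain on the order of $MT$ faces; comparing this with the linear bound caps $M$ by a constant, which yields uniformly thin bigons and hence hyperbolicity. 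The delicate point --- and where I expect essentially all the work to go --- is making the area lower bound for such a ``fat'' sub-diagram precise: one has to control how a reduced van Kampen diagram spanning two geodesic sides can be foliated by transversal arcs and rule out degeneracies. This is exactly the technical heart of Gromov's original argument; alternative routes I could take are Ol'shanskii's, Bowditch's, or the direct induction in Bridson--Haefliger that bounds the ``insize'' of a geodesic triangle straight from the isoperimetric function. In all cases the forward direction is routine bookkeeping once the Dehn presentation is set up, and the converse is the substantive part.
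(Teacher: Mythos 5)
This theorem is a classical result that the paper simply cites from Gromov without proof, so there is no internal argument to compare yours against; I can only judge the sketch on its own terms. What you describe is the standard textbook route and the ideas are the right ones: Dehn presentation plus the local-to-global principle for local geodesics in the forward direction, and thin bigons obtained from an area lower bound for ``fat'' corridors in the converse. Three caveats. First, in the forward direction you prove the linear bound for the auxiliary presentation $\langle X\mid R_0\rangle$, whereas the statement concerns the given presentation $\Pres{X}{R}$; you still need the standard (but not free) fact that a linear isoperimetric inequality is independent of the choice of finite presentation, and the passage from ``Dehn's algorithm uses at most $|w|$ relator applications'' to a bound valid for \emph{every} reduced diagram, rather than for some filling of minimal area, is exactly the step you wave at with ``the standard strengthening'' --- the classical theorem is really a statement about the Dehn function, i.e.\ about minimal-area fillings. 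Second, the converse is left as a plan: you correctly identify the area lower bound for a sub-diagram whose two geodesic sides diverge as the crux, but you do not carry it out, so as written this is an outline of the known proof (Papasoglu, Ol'shanskii, Bowditch, or Bridson--Haefliger all work) rather than a proof. Third, a point of reading: the inequality as printed in the statement, $|\partial D|\leq C|D|$, has the roles reversed relative to the isoperimetric inequality you (correctly) work with, namely $|D|\leq C|\partial D|$; the quantitative form used throughout the rest of the paper, $|\partial D|\geq(1-2d-\varepsilon)|D|\ell$, confirms that the intended meaning is a linear upper bound on area in terms of boundary length, so your reading is the right one, but the discrepancy is worth flagging.
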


\paragraph{Random groups.} Fix an alphabet $X = \{x_1,\dots,x_m\}$ with $m\geq 2$. For $\ell\geq1$, let $B_\ell$ be the set of cyclically reduced words on $X^\pm$ of length at most $\ell$. A \textit{sequence of random groups} $(G_\ell(m,d))_{\ell\in\mathbb{N}}$ with $m$ generators at density $d\in \{-\infty\}\cup[0,1]$ is defined by 

\[G_\ell(m,d) = \Pres{X}{R_\ell}\] 
where $R_\ell$ is a random subset of $B_\ell$, uniformly distributed among all subsets of $B_\ell$ of cardinality $\flr{|B_\ell|^d}$.

Let $(Q_\ell)$ be a sequence of group presentation properties. We say that $G_\ell(m,d)$ satisfies $Q_\ell$ \textit{asymptotically almost surely} (denoted a.a.s.) if the probability that $G_\ell(m,d)$ satisfies $Q_\ell$ converges to $1$ when $\ell$ goes to infinity.\\

Recall that (\cite{Gro93}) for any $m\geq 2$, if $d>1/2$ then a.a.s. $G_\ell(m,d)$ is a trivial group; if $d<1/2$ then a.a.s. $G_\ell(m,d)$ is a hyperbolic group. More precisely:

\begin{thm}[{\cite{Gro93}}, {\cite{Oll07}}]\label{inequality for diagrams} If $d<1/2$, then for any $\varepsilon>0$, a.a.s. every reduced van Kampen diagram $D$ of $G_\ell(m,d)$ satisfies the isoperimetric inequality
\[|\partial D|\geq (1-2d-\varepsilon)|D|\ell.\]
In addition, a.a.s. $G_\ell(m,d)$ is torsion free and $\delta$-hyperbolic with 
    \[\delta = \frac{4\ell}{1-2d}.\]
\end{thm}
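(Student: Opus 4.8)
The statement packages a sharp isoperimetric inequality together with its standard structural consequences (hyperbolicity, torsion-freeness, and an explicit hyperbolicity constant); the substance lies in the isoperimetric inequality, for which I would use a first-moment (expected-count) argument over reduced van Kampen diagrams, and I would treat the two "In addition" assertions as corollaries. First I would normalise lengths: since $B_\ell$ is overwhelmingly dominated by words of length exactly $\ell$, a.a.s. every relator of $R_\ell$ has length $(1-o(1))\ell$, so I may treat all faces as having perimeter $\ell$ and absorb the $o(1)$ into $\varepsilon$. Writing $n=|D|$ and $E_{\mathrm{int}}$ for the number of interior edges of a reduced diagram $D$, the face-perimeter accounting $n\ell=|\partial D|+2E_{\mathrm{int}}$ (each interior edge bordering exactly two faces) shows that the target inequality $|\partial D|\geq(1-2d-\varepsilon)n\ell$ is equivalent to $E_{\mathrm{int}}\leq(d+\tfrac{\varepsilon}{2})n\ell$. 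Thus it suffices to prove that a.a.s. no reduced diagram has more than $(d+\tfrac{\varepsilon}{2})n\ell$ interior edges.

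The heart of the argument is a probabilistic upper bound on the chance that a fixed combinatorial diagram shape can be realised (``fulfilled'') by labels from the random set $R_\ell$. I would order the faces $f_1,\dots,f_n$ so that each $f_i$ meets $f_1\cup\dots\cup f_{i-1}$ along a connected arc of length $m_i$ (with $m_1=0$), which is possible since the diagram is connected. When fulfilling $f_i$, its label is prescribed on the length-$m_i$ shared arc, so the expected number of relators of $R_\ell$ compatible with $f_i$ is at most (number of reduced completions)$\times\Pr[\,\cdot\in R_\ell\,]\approx (2m-1)^{\ell-m_i}\cdot(2m-1)^{(d-1)\ell}=(2m-1)^{d\ell-m_i}$. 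Multiplying over $i$ and using $\sum_{i\geq 2}m_i=E_{\mathrm{int}}$ (each interior edge is created once, when its second incident face is attached) gives, for a fixed shape,
\[
\esp[\#\text{ fulfillings of the shape}]\;\lesssim\;(2m-1)^{\,nd\ell-E_{\mathrm{int}}}.
\]
For a shape with $E_{\mathrm{int}}>(d+\tfrac{\varepsilon}{2})n\ell$ this is at most $(2m-1)^{-\frac{\varepsilon}{2}n\ell}$, an exponential gain per unit of total length.

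To conclude I would sum this estimate over all diagram shapes and over $n$ via Markov's inequality. The exponential factor $(2m-1)^{-\frac{\varepsilon}{2}n\ell}$ must dominate the number of combinatorial types of diagrams with $n$ faces; granting that, the expected number of ``bad'' reduced diagrams tends to $0$ as $\ell\to\infty$, so a.a.s. every reduced diagram satisfies $E_{\mathrm{int}}\leq(d+\tfrac{\varepsilon}{2})n\ell$, i.e. the isoperimetric inequality. Once this holds with $1-2d-\varepsilon>0$ (valid precisely because $d<1/2$), the linear bound $|\partial D|\geq(1-2d-\varepsilon)\ell\,|D|$ yields Gromov hyperbolicity by the criterion stated above, and the explicit value $\delta=4\ell/(1-2d)$ is obtained by tracking this isoperimetric constant through the standard conversion from a linear isoperimetric inequality to a thin-triangles constant. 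Torsion-freeness I would obtain by the same first-moment method applied to the (disk or annular) diagrams that a nontrivial torsion relation $u^k=1$ would produce, which a.a.s. do not exist.

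I expect the main obstacle to lie in making the ``heart'' estimate sharp enough to yield the exact constant $1-2d$ rather than merely some positive linear bound, and this has two delicate facets. First, the per-face expectation must correctly handle the possibility that the same relator word labels several faces: the events ``$f_i$ labelled by $r$'' are then dependent, and it is exactly the reducedness hypothesis that forbids the degenerate high-overlap configurations which would otherwise inflate the count. Second, the enumeration of diagram shapes must be kept small enough that it is genuinely beaten by the factor $(2m-1)^{-\frac{\varepsilon}{2}n\ell}$; a crude planar-map bound of the form $C^{n\ell}$ is too large for small $\varepsilon$, so this step requires a careful (non-crude) encoding of shapes, together with a separate treatment or an a priori cap for very large diagrams, to ensure convergence of the first-moment sum.
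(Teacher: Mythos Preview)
The paper does not give its own proof of this theorem; it is quoted from the literature (\cite{Gro93}, \cite{Oll07}) as a preliminary result, so there is nothing in the paper to compare your argument against. Your outline is essentially the standard Ollivier argument: a first-moment count of fulfillings of a fixed diagram shape, yielding the bound $(2m-1)^{nd\ell-E_{\mathrm{int}}}$, followed by a union bound over shapes, with hyperbolicity and torsion-freeness read off as corollaries.

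On your two anticipated obstacles, the paper itself contains the relevant hint. The enumeration problem you raise --- that a crude planar-map count $C^{n\ell}$ would swamp the gain $(2m-1)^{-\frac{\varepsilon}{2}n\ell}$ for small $\varepsilon$ --- is exactly why, as the paragraph immediately following the theorem notes, the inequality was ``first proved with the additional condition that the diagrams have a bounded number of faces''. For $|D|\leq K$ the number of shapes is finite and independent of $\ell$, so the first-moment sum is trivially controlled; the passage from bounded $|D|$ to all diagrams is then handled by a separate local-to-global step (Gromov's mesh argument, or the Papasoglu--Ollivier criterion), not by a sharper shape enumeration. So your instinct to look for a non-crude encoding of shapes is not the route the cited proofs take. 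Your first obstacle (correlated labels when several faces carry the same relator) is genuine and is indeed where reducedness enters; in Ollivier's version it is handled by distinguishing the number of \emph{distinct} relators appearing in the diagram from the number of faces.
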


\quad

The isoperimetric inequality for reduced van Kampen diagrams in random groups was first proved with the additional condition that the diagrams have a bounded number of faces (i.e., $|D|<K$ with some arbitrary $K>0$ before a.a.s.). A version of the inequality for non-reduced 2-complex diagrams was established in \cite{Tsa22b} (see also \cite{GM18}), with the additional condition that the 2-complexes have "bounded complexity."

A vertex in a 2-complex is called \textit{non-singular} if it is a degree $2$ vertex that is not the starting point of a boundary loop. Other vertices are called \textit{singular}. A \textit{maximal arc} in a 2-complex is a reduced path that passes only through non-singular vertices and has singular vertices as its endpoints. Given any finite 2-complex, its 1-skeleton can be partitioned into maximal arcs. The \textit{complexity} of a 2-complex encodes the number of maximal arcs along with the number of faces:

\begin{defi}[Complexity of a 2-complex]\label{complexity}
Let $Y$ be a finite connected 2-complex. Let $K>0$. We say that $Y$ has complexity $K$ if $|Y|\leq K$ and if for any face $f$ of $Y$, the boundary loop $\partial f$ is divided into at most $K$ maximal arcs.
\end{defi}

If $D$ is a planar and simply connected 2-complex with $|D|\leq K$, then the complexity of $D$ is $6K$. Here is the non-reduced 2-complex version of the isoperimetric inequality.

\begin{thm}[\cite{Tsa22b}] \label{inequality for 2-complex} Let $d<1/2$. For any (small) $\varepsilon>0$ and any (large) $K>0$, a.a.s. every diagram $Y$ of complexity $K$ of $G_\ell(m,d)$ satisfies the inequality 
\[|Y^{(1)}|+\Red(Y)\geq (1-d-\varepsilon)|Y|\ell.\]
\end{thm}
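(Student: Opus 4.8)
The plan is to prove the inequality by the first-moment (union-bound) method: bound the expected number of diagrams $Y$ of complexity $K$ that are realizable in $G_\ell(m,d)$ \emph{and} violate the inequality, and show this expectation tends to $0$. Since $|Y|\le K$ and each face boundary is cut into at most $K$ maximal arcs, the purely combinatorial \emph{type} of $Y$ (the 2-complex together with its face/arc gluing pattern, forgetting arc lengths and labels) ranges over a finite set whose size depends only on $K$ and $m$; the lengths of the maximal arcs then contribute only a factor $\ell^{O(K^2)}$, polynomial in $\ell$. This use of the complexity bound is essential: it is what keeps the number of terms in the union bound subexponential in $\ell$, so that it can be defeated by the exponentially small per-type probabilities.

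For each fixed combinatorial type and choice of arc lengths I would refine the count by the \emph{relator-identification structure} $\sigma$: a specification of which faces carry the same relator and with which cyclic alignment. There are only $O_K(1)$ such $\sigma$. Given the type and $\sigma$, the integers $|Y|$, $|Y^{(1)}|$, $\Red(Y)$ are already determined, as is the number $t$ of distinct relators used. A diagram of this description is realizable exactly when its $t$ distinct boundary words all lie in $R_\ell$; since a fixed reduced word lies in $R_\ell$ with probability $\approx |R_\ell|/|B_\ell|=(2m-1)^{-(1-d)\ell}$, and the number of label-consistent choices of the $t$ relators is $(2m-1)^{f}$, where $f$ is the number of \emph{free letter-slots} (letters of the relators not forced equal by a gluing), the expected number of realizable diagrams of this description is
\[
\esp[\#]\;\approx\;(2m-1)^{\,f-(1-d)\ell t}.
\]
Here I treat all relators as having length close to $\ell$; the genuine range of lengths $\le\ell$, cyclic reduction, and the sampling of $R_\ell$ without replacement produce only lower-order corrections absorbed into $\varepsilon$.

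The crux is the combinatorial lemma
\[
f-(1-d)\ell\,t\;\le\;|Y^{(1)}|+\Red(Y)-(1-d)|Y|\ell\;+\;o(|Y|\ell).
\]
The left side rewards reusing a relator (smaller $t$, hence a smaller subtraction), but reuse also forces letters to coincide, lowering $f$ below the naive count $|Y^{(1)}|$; the point is that $\Red(Y)$ exactly books the part of this coincidence coming from two faces meeting the \emph{same} edge at the \emph{same} position in the \emph{same} relator. Concretely, I would show that the number of distinct (edge, relator, position) incidences equals $\sum_f|\partial f|-\Red(Y)$, use this to bound $f$ in terms of $|Y^{(1)}|$ and $\Red(Y)$, and combine it with the elementary bound $t\ell\le\sum_f|\partial f|-\Red(Y)$. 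As a sanity check, when $Y$ is a reduced planar simply connected diagram one has $\Red(Y)=0$, $f=|Y^{(1)}|$, $t=|Y|$, so the exponent is exactly $|Y^{(1)}|-(1-d)|Y|\ell$; the resulting inequality $|Y^{(1)}|\ge(1-d-\varepsilon)|Y|\ell$ unwinds, via $|Y^{(1)}|=\tfrac12\big(\sum_f|\partial f|+|\partial D|\big)$, to the classical isoperimetric inequality $|\partial D|\ge(1-2d-\varepsilon)|D|\ell$ of Theorem \ref{inequality for diagrams}.

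Granting the lemma, the argument closes quickly. For a diagram violating the conclusion one has $|Y^{(1)}|+\Red(Y)<(1-d-\varepsilon)|Y|\ell$, so the per-type exponent is at most $-\varepsilon|Y|\ell+o(|Y|\ell)\le-\tfrac{\varepsilon}{2}\ell$ for $\ell$ large (using $|Y|\ge1$). Summing the bound $(2m-1)^{-\varepsilon\ell/2}$ against the $\ell^{O(K^2)}$ types and the $O_K(1)$ identification structures still tends to $0$, so a.a.s. no violating diagram of complexity $K$ exists. The main obstacle I anticipate is precisely the combinatorial lemma: honestly tracking, across all identification structures, how reused relators simultaneously shrink $f$ and $t$, and certifying that the net effect is dominated by $|Y^{(1)}|+\Red(Y)$ rather than merely by $|Y^{(1)}|$ — the crude bound $f\le|Y^{(1)}|$ alone is too lossy and would leave the exponent possibly positive.
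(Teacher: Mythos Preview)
The paper does not contain a proof of this theorem: it is quoted from \cite{Tsa22b} and used as a black box, so there is nothing in the present paper to compare your attempt against.

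For what it is worth, your plan is the standard route and matches how the result is established in the cited source and in the parallel treatment of \cite{GM18}: a first-moment argument over ``decorated'' abstract diagrams of bounded complexity, with the complexity bound $K$ guaranteeing only polynomially-in-$\ell$ many shapes, and the per-shape probability governed by the number $f$ of free letter-slots and the number $t$ of distinct relators used. The combinatorial inequality you isolate is indeed the crux; it essentially amounts to $f\le |Y^{(1)}|+\Red(Y)-(|Y|-t)\ell$ (from which your displayed lemma follows via $t\le|Y|$), and is obtained by tracking how each reuse of a relator forces a whole face-worth of letters while at the same time making some gluing constraints redundant --- precisely the redundancies counted by $\Red$. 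You correctly identify this step as the one requiring real work, and in your proposal it remains a genuine gap rather than a proof; but as a plan it is on target.
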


\quad

The \textit{cancellation} of a 2-complex $Y $ is the number of cancelled edges when attaching faces. It is defined by
\[\Cancel(Y):=\sum_{e\in Y^{(1)}}(\degg(e)-1)^+\]
where $\degg(e)$ is the number of faces attached on the edge $e$. For any 2-complex $Y$ with face boundary length at most $\ell$, we have
\[|Y^{(1)}|+\Cancel(Y) = \sum_{f\in Y}|\partial f|\leq |Y|\ell.\]
So Theorem \ref{inequality for 2-complex} can be rephrased as

\begin{equation}\label{useful reduced inequality}
    \Cancel(Y)-\Red(Y)\leq (d+\varepsilon)|Y|\ell.
\end{equation}

If $D$ is a reduced van Kampen diagram, we have simply $\Cancel(D) \leq (d+\varepsilon)|D|\ell.$ Together with the isoperimetric inequality (Theorem \ref{inequality for diagrams}), we get

\begin{equation}\label{useful diagram inequality}
    \Cancel(D)\leq \frac{d+\varepsilon}{1-2d-\varepsilon}|\partial D|.
\end{equation}

\section{Small cancellations in random groups}
Throughout this article, in the figures containing diagrams, we will use light gray regions to indicate faces and dark gray regions to indicate subdiagrams. The following lemma will be useful to prove phase transition results in random groups.
\begin{lem}[{\cite[Corollary 1.7]{Tsa22b}}]\label{existence of diagrams} Let $\varepsilon>0$ and $K>0$. Let $(D_\ell)$ be a sequence of finite planar simply connected 2-complexes of the same geometric form (\cite{Tsa22b} Definition 3.1) such that every face boundary length of $D_\ell$ is exactly $\ell$. If every sub-2-complex $D_\ell'$ of $D_\ell$ satisfies
    \[|\partial D'_\ell|\geq (1-2d+\varepsilon)|D'_\ell|\ell,\]
then a.a.s. there exists a reduced van Kampen diagram of $G_\ell(m,d)$ whose underlying 2-complex is $D_\ell$.
\end{lem}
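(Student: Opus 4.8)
The statement asserts the existence, asymptotically almost surely, of a reduced van Kampen diagram whose underlying complex is the prescribed $D_\ell$. My plan is to prove this by a second-moment (Paley--Zygmund) argument applied to the number of fillings of $D_\ell$. Let $N=N_\ell$ be the number of consistent edge-labelings of $D_\ell$ by $X^\pm$ such that every face reads a relator of $R_\ell$ and such that no reducible pair of faces is created; each such labeling is exactly a reduced van Kampen diagram with underlying complex $D_\ell$. The goal is $\Pr[N\geq 1]\to 1$, so it suffices to show $\esp[N]\to\infty$ and $\Var(N)=o(\esp[N]^2)$, after which Chebyshev's inequality gives $\Pr[N\geq 1]\geq \esp[N]^2/\esp[N^2]\to 1$.

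\textbf{First moment.} Since $R_\ell$ is a uniform subset of $B_\ell$ of size $|B_\ell|^d$ with $|B_\ell|\sim(2m-1)^\ell$, a hypergeometric computation shows that for any fixed family of $k=O(K)$ distinct words the probability that all of them lie in $R_\ell$ is $(1+o(1))\,|B_\ell|^{(d-1)k}$, so the independence approximation is valid precisely because $|D_\ell|\leq K$ is bounded. The number of consistent edge-labelings of $D_\ell$ with reduced face-boundaries is, up to a factor subexponential in $\ell$, equal to $(2m-1)^{|D_\ell^{(1)}|}$: along each of the boundedly many maximal arcs a reduced word contributes a factor $(2m-1)$ per edge, and the interior identifications account for exactly $\Cancel(D_\ell)$ of the $|D_\ell|\ell$ a priori free positions via the identity $|D_\ell^{(1)}|+\Cancel(D_\ell)=|D_\ell|\ell$. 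Multiplying the labeling count by $|B_\ell|^{(d-1)|D_\ell|}$ and using $\Cancel(D_\ell)=\tfrac12(|D_\ell|\ell-|\partial D_\ell|)$, the exponent collapses to
\[\big(|D_\ell^{(1)}|-(1-d)|D_\ell|\ell\big)=\tfrac12\big(|\partial D_\ell|-(1-2d)|D_\ell|\ell\big).\]
Applying the hypothesis to $D'=D_\ell$ gives $|\partial D_\ell|-(1-2d)|D_\ell|\ell\geq \varepsilon|D_\ell|\ell$, hence $\esp[N]\geq (1+o(1))(2m-1)^{\varepsilon|D_\ell|\ell/2}\to\infty$. The labelings containing a reducible pair force two adjacent faces to carry the same aligned relator, an event of probability $O((2m-1)^{-\ell})$, and therefore remove only a lower-order term.

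\textbf{Second moment.} Write $\esp[N^2]=\sum_{(L_1,L_2)}\Pr[L_1,L_2\text{ both valid}]$ over ordered pairs of labelings, and decompose the sum by the set of faces on which $L_1$ and $L_2$ use the same relator; in the dominant regime this set spans a sub-2-complex $D'\subseteq D_\ell$ on which the two labelings coincide, while disjoint relators are used elsewhere. For a fixed $D'$ the two fillings involve $2|D_\ell|-|D'|$ distinct relators and the number of such pairs is $(1+o(1))(2m-1)^{2|D_\ell^{(1)}|-|{D'}^{(1)}|}$, so the contribution of $D'$ factors as
\[(1+o(1))\,(2m-1)^{2(|D_\ell^{(1)}|-(1-d)|D_\ell|\ell)}\cdot(2m-1)^{-(|{D'}^{(1)}|-(1-d)|D'|\ell)}.\]
The first factor is exactly $\esp[N]^2$, and the same algebra as above rewrites the second exponent as $-\tfrac12(|\partial D'|-(1-2d)|D'|\ell)$. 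The hypothesis applied to $D'$ bounds this factor by $(2m-1)^{-\varepsilon|D'|\ell/2}$, which is $o(1)$ whenever $D'\neq\varnothing$. The empty overlap contributes $(1+o(1))\esp[N]^2$, and summing the remaining contributions over the boundedly many overlap types (finite because the geometric form is fixed and $|D_\ell|\leq K$) yields $\esp[N^2]=(1+o(1))\esp[N]^2$, i.e.\ $\Var(N)=o(\esp[N]^2)$.

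\textbf{Main obstacle.} The delicate point is the second-moment estimate, and specifically the verification that the correlated contributions are genuinely indexed by sub-2-complexes to which the hypothesis applies: one must show that the dominant coincidences between two fillings occur along a sub-complex $D'$ (so that the full-strength inequality $|\partial D'|\geq(1-2d+\varepsilon)|D'|\ell$ on \emph{every} sub-complex, and not merely on $D_\ell$, is exactly what is needed), and separately bound the accidental coincidences where a relator is reused on non-adjacent faces, which are even rarer. The ``same geometric form'' hypothesis is what keeps the number of overlap types and all subexponential prefactors uniformly bounded in $\ell$, so that the $o(1)$ errors are uniform and Chebyshev's inequality closes the argument.
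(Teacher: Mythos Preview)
The paper does not prove this lemma: it is quoted verbatim as \cite[Corollary 1.7]{Tsa22b} and used as a black box, so there is no ``paper's own proof'' to compare against. Your second-moment argument is the standard route to such existence statements in the density model (this is essentially the method behind the cited result and, earlier, Ollivier's sharp phase-transition work), so strategically you are on the right track.

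That said, two points in your sketch deserve more care before it counts as a proof. First, in the second-moment decomposition you index correlations by ``the sub-2-complex $D'$ on which the two labelings coincide'', but the set of faces where $L_1$ and $L_2$ carry the same relator need not be connected, nor need it be exactly a sub-complex in the sense of the hypothesis; moreover two distinct faces of a single filling may already repeat a relator. The clean bookkeeping is to view a pair $(L_1,L_2)$ as a labeling of the doubled $2$-complex $D_\ell\sqcup D_\ell$ and to stratify by the partition of its $2|D_\ell|$ faces according to which relator they carry; the relevant inequality is then applied to each block, and connectedness of $D'$ is never needed. Second, your formula $\Cancel(D')=\tfrac12(|D'|\ell-|\partial D'|)$ is valid for planar simply connected $D'$, but an arbitrary union of faces of $D_\ell$ need not be simply connected (or even connected), so the identity you use to convert edge counts into boundary lengths can fail for some overlap patterns; this is precisely why the hypothesis is phrased for all sub-$2$-complexes and why the ``same geometric form'' assumption matters. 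These are exactly the technicalities handled in the cited reference; once they are addressed, your argument goes through.
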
\quad

Recall that a group presentation satisfies the $C(p)$ small cancellation condition if any face of a reduced van Kampen diagram is surrounded by at least $p$ faces \cite{LS77}. For random groups, there is a phase transition at density $1/p$ (see \cite{OW11}, \cite{Tsa22b}).

\begin{lem}[{\cite[Proposition 4.2]{Tsa22b}}]\label{C of p} Let $p\geq 2$ be an integer. 
\begin{enumerate}[(i)]
    \item If $d<1/p$, then a.a.s. $G_\ell(m,d)$ satisfies $C(p)$.
    \item If $d>1/p$, then a.a.s. $G_\ell(m,d)$ does not satisfy $C(p)$.
\end{enumerate}
\end{lem}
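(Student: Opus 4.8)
The plan is to prove the two directions separately, exploiting the two complementary isoperimetric tools from the preliminaries: the lower bound of Theorem \ref{inequality for diagrams} (with slack $1-2d-\varepsilon$) rules out the dense configurations that would witness a failure of $C(p)$, while the construction Lemma \ref{existence of diagrams} (with slack $1-2d+\varepsilon$) produces such a configuration on demand. The sharpness of the transition at $d=1/p$ comes precisely from the fact that the offending configuration sits at the threshold density $1-2/p$, so that an arbitrarily small $\varepsilon$ separates the two regimes.

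For part (i) I would argue by contradiction. Suppose $C(p)$ fails; then some reduced van Kampen diagram $D$ contains an interior face $f_0$ adjacent to at most $p-1$ faces $f_1,\dots,f_k$ with $k\leq p-1$. Let $Y\subseteq D$ be the subdiagram consisting of $f_0$ together with these neighbors. Since $f_0$ is interior, its whole boundary is shared, so writing $a_i$ for the length of the boundary $f_0$ shares with $f_i$ we have $\sum_i a_i=\ell$; consequently the outer boundary of $Y$ satisfies $|\partial Y|\leq\sum_i(\ell-a_i)=(k-1)\ell$, with any edge shared between two neighbors only decreasing $|\partial Y|$ further. Being a simply connected subcomplex of a reduced diagram, $Y$ is itself a reduced van Kampen diagram, so Theorem \ref{inequality for diagrams} forces $|\partial Y|\geq(1-2d-\varepsilon)(k+1)\ell$. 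Combining the two bounds gives $(k-1)\geq(1-2d-\varepsilon)(k+1)$, i.e. $2d+\varepsilon\geq 2/(k+1)\geq 2/p$, i.e. $d\geq 1/p-\varepsilon/2$. For $d<1/p$ I choose $\varepsilon$ small enough that $d<1/p-\varepsilon/2$, a contradiction, so a.a.s. $C(p)$ holds.

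For part (ii) I would exhibit an explicit ``flower'' 2-complex $D_\ell$: a central face $f_0$ of boundary length $\ell$ whose boundary is cut into $p-1$ arcs of length $\ell/(p-1)$, with distinct petal faces $f_1,\dots,f_{p-1}$ (each of boundary length $\ell$) glued one per arc, consecutive petals meeting $f_0$ at a common corner vertex so that $D_\ell$ is connected and planar. A direct count gives $|\partial D_\ell|=(p-2)\ell$ and $|D_\ell|=p$, while a sub-2-complex consisting of $f_0$ with $k$ petals has boundary $\frac{p-1+k(p-3)}{p-1}\ell$ and $k+1$ faces. The required inequality $|\partial D'|\geq(1-2d+\varepsilon)|D'|\ell$ is affine in $k$; at $k=0$ it reads $2d\geq\varepsilon$ and at $k=p-1$ it reads $d>1/p+\varepsilon/2$, both of which hold for $d>1/p$ and $\varepsilon$ small, hence it holds for all intermediate $k$ and, by additivity, for petal-only subcomplexes as well. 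Lemma \ref{existence of diagrams} then yields a.a.s. a reduced van Kampen diagram with underlying complex $D_\ell$, in which $f_0$ is surrounded by only $p-1$ faces, violating $C(p)$.

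The main obstacle, and the step deserving genuine care, is the exhaustive check in part (i) that no configuration witnessing the failure of $C(p)$ is ``lighter'' than the flower: one must confirm that letting a single neighbor share several arcs with $f_0$, or letting neighbors share edges among themselves, only lowers $|\partial Y|$ relative to $|Y|$ and thus strengthens the violation, so that the bound $d<1/p$ (after letting $\varepsilon\to 0$) is genuinely sufficient in every case. A secondary, purely technical point is the geometric realizability in part (ii): arranging the arc length $\ell/(p-1)$ to be a positive integer (along a suitable subsequence of $\ell$, or by splitting $\ell$ into $p-1$ nearly equal integer arcs) and checking that the family $(D_\ell)$ has a single geometric form, as demanded by Lemma \ref{existence of diagrams}.
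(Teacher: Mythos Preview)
The paper does not prove this lemma at all; it is imported verbatim from \cite{Tsa22b} and used as a black box. Your argument is nonetheless the expected one, and in fact mirrors the paper's own proofs of the neighbouring Lemmas \ref{C tilde of 2}(ii) and \ref{C tilde of p}(i): part (i) is the isoperimetric count on the flower $Y=f_0\cup f_1\cup\dots\cup f_k$ via Theorem \ref{inequality for diagrams}, and part (ii) is an explicit planar complex fed into Lemma \ref{existence of diagrams}. So there is nothing to compare against in this paper, but your strategy is sound and matches the surrounding material.

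Two small technical points are worth tightening. In (i) you write $\sum_i a_i=\ell$, tacitly assuming $|\partial f_0|=\ell$, whereas in the model used here relators have length \emph{at most} $\ell$; it is cleaner to argue exactly as in the proof of Lemma \ref{C tilde of p}, namely $|\partial Y|+|\partial f_0|\leq\sum_{i=1}^{k}|\partial f_i|\leq k\ell$ together with the isoperimetric inequality applied to both $Y$ and to the single face $f_0$, which yields the same conclusion without that assumption. In (ii) your flower degenerates when $p=2$: the single petal would share all of $\partial f_0$ and the complex becomes a $2$-sphere rather than a planar diagram, so Lemma \ref{existence of diagrams} does not apply. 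That boundary case needs a separate (easy) construction, for instance two faces sharing a piece of length exceeding $\ell/2$, which at $d>1/2$ exists a.a.s.
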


\quad

We introduce here a stronger condition, denoted $\tilde C(p)$.

\begin{defi}\label{def C tilde of p} Let $p\geq 2$ be an integer. A group presentation satisfies the $\tilde C(p)$ small cancellation condition if, for every reduced van Kampen diagram, any simply connected subdiagram is surrounded by at least $p$ faces.
\end{defi}

It is clear that $\tilde C(p)$ implies $C(p)$ for any $p\geq2$. The condition $\tilde C(2)$ is equivalent to the property that every face in a reduced van Kampen diagram has a simple boundary loop. In \cite[Ch.V, Lemma 4.1]{LS77}, it was shown that this property holds for $C(6)-T(3)$ groups, $C(4)-T(4)$ groups, and $C(3)-T(6)$ groups. In random groups, there is a phase transition at density $2/5$.

\begin{lem}\label{C tilde of 2}Let $G_\ell(m,d)$ be a random group at density $d$. 
    \begin{enumerate}[(i)]
        \item If $d<2/5$, then a.a.s. $G_\ell(m,d)$ satisfies $\tilde C(2)$.
        \item If $d>2/5$, then a.a.s. $G_\ell(m,d)$ does not satisfy $\tilde C(2)$.
    \end{enumerate}
\end{lem}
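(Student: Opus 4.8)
\medskip
\noindent\textbf{Proof proposal.} The statement is a two-sided phase transition, and I would treat the two implications separately, the lower bound being the substantial one. I would start from the equivalence recorded just above: $G_\ell(m,d)$ violates $\tilde C(2)$ iff some reduced van Kampen diagram has a face with non-simple boundary loop. Unwinding this and passing to a minimal witness, one gets a reduced disk diagram of the form $Y=D_0\cup\{f\}$, where $D_0$ is a disk subdiagram with $k\ge1$ faces and simple boundary, no face of $D_0$ has non-simple boundary, and $f$ is a single face glued to $D_0$ along the whole loop $\partial D_0$; writing $s=|\partial D_0|$ this means $\partial f$ cyclically reads $(\partial D_0)^{-1}\gamma$ with $|\gamma|\ge1$, so that $s\le\ell-1$. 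The lemma then reduces to locating the density at which such a $Y$ can (a.a.s.) occur.

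\medskip
\noindent\emph{Part (i), $d<2/5$.} First a crude bound on $k$: applying the isoperimetric inequality (Theorem~\ref{inequality for diagrams}) to $D_0$ gives $\ell-1\ge s\ge(1-2d-\varepsilon)k\ell$, so $k\le C_0(d)$ for a constant $C_0$; hence $Y$ has complexity at most $6(C_0+1)$ and Theorem~\ref{inequality for 2-complex} applies to $Y$ and to $D_0$. Now I count edges. The $s$ edges of $\partial D_0$ are exactly those shared by $f$ and $D_0$ in $Y$, so $|Y^{(1)}|=|D_0^{(1)}|+(|\partial f|-s)$. Since no face of $D_0$ has non-simple boundary, every interior edge of $D_0$ borders two distinct faces, whence $\sum_{g\in D_0}|\partial g|=2|D_0^{(1)}|-s\le k\ell$, i.e.\ $|D_0^{(1)}|\le(k\ell+s)/2$. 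Plugging $|\partial f|\le\ell$ and $s\ge(1-2d-\varepsilon)k\ell$ into $|Y^{(1)}|\ge(1-d-\varepsilon)(k+1)\ell$ and simplifying yields $k(1-2d)\le d+O(\varepsilon)$; since $\tfrac{d}{1-2d}<2$ for $d<2/5$, choosing $\varepsilon$ small forces $k=1$. But for $k=1$ one has $|Y^{(1)}|=|\partial f|\le\ell$, while Theorem~\ref{inequality for 2-complex} gives $|Y^{(1)}|\ge2(1-d-\varepsilon)\ell>\ell$ for $d<1/2$ and $\varepsilon$ small enough -- a contradiction. So a.a.s.\ no such $Y$ exists, which is (i).

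\medskip
\noindent\emph{Part (ii), $d>2/5$.} Here I would exhibit an explicit shape and invoke Lemma~\ref{existence of diagrams}. Take two $\ell$-gons $g_1,g_2$ glued along an arc of length $t=\lceil(2-3d+2\varepsilon)\ell\rceil$, set $D_0=g_1\cup g_2$ (a disk with $|\partial D_0|=2\ell-2t<\ell$ and $|D_0|=2$), and glue a third $\ell$-gon $f$ all the way around $\partial D_0$, leaving $f$ a free arc of length $2t-\ell\ge1$; the result $Y$ is planar, simply connected, has all face lengths $\ell$, and the same geometric form for all large $\ell$. One checks that every sub-$2$-complex of $Y$ obeys $|\partial(\cdot)|\ge(1-2d+\varepsilon)|(\cdot)|\ell$: the binding constraints come from $\{g_1,g_2\}$, which needs $t\le(2d-\varepsilon)\ell$, and from $Y$ itself, whose boundary has length $2t-\ell$ and which needs $2t-\ell\ge3(1-2d+\varepsilon)\ell$, i.e.\ $t\ge(2-3d+\tfrac32\varepsilon)\ell$. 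These two are compatible exactly when $d>2/5+O(\varepsilon)$, so for $\varepsilon$ small the chosen $t$ works. Lemma~\ref{existence of diagrams} then produces, a.a.s., a reduced van Kampen diagram with underlying complex $Y$, in which $D_0$ is a proper simply connected subdiagram surrounded by the single face $f$; hence $\tilde C(2)$ fails.

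\medskip
\noindent The crux is the sharpness of part (i): replacing Theorem~\ref{inequality for 2-complex} by the weaker diagram isoperimetric inequality on $Y$ only yields $d<1/3$, and reaching $2/5$ requires both the better constant $1-d-\varepsilon$ of the $2$-complex inequality and the exact edge bookkeeping above -- in particular the observation that minimality forbids non-simple boundaries \emph{inside} $D_0$, which is what makes $\sum|\partial g|=2|D_0^{(1)}|-s$ hold. In part (ii) the only subtlety is verifying the strict inequality over all sub-$2$-complexes of the chosen shape, which is routine once the shape is pinned down, and the crossover at $2/5$ then falls out of the constraint coming from $Y$ itself.
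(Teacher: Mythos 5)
Your part (ii) is correct and is essentially the paper's own construction: the diagram of Figure \ref{not C tilde of 2} is exactly your shape with $t=0.8\ell$ (two $\ell$-gons sharing an arc of length $0.8\ell$, enclosed by a third $\ell$-gon whose free arc has length $0.6\ell$), and the verification of the hypotheses of Lemma \ref{existence of diagrams} is the same computation.

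Part (i) takes a genuinely different route, and it has a gap at the step where you invoke Theorem \ref{inequality for 2-complex} for $Y=D_0\cup\{f\}$. That theorem only applies to diagrams whose complexity is bounded by a constant $K$ fixed before $\ell\to\infty$, and the bound ``complexity $\le 6(C_0+1)$'' is only available for \emph{planar simply connected} complexes. But $f$ is by hypothesis a face whose boundary loop is non-simple: besides the subloop $\beta=(\partial D_0)^{-1}$, the complementary arc $\gamma$ may a priori self-intersect $\Theta(\ell)$ times. Each such self-intersection is a singular vertex of $Y$, so $\partial f$ may be divided into unboundedly many maximal arcs, and the extra subloops enclose regions of the ambient diagram that are not contained in $Y$, so $Y$ need not be simply connected. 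Minimality of $|D_0|$ does not rule this out (the other enclosed regions only need to contain at least $|D_0|$ faces), and the same objection applies to your $k=1$ case. So the key inequality $|Y^{(1)}|\ge(1-d-\varepsilon)(k+1)\ell$ is not justified as written. The gap is avoidable, and in fact your closing claim --- that the plain isoperimetric inequality ``only yields $d<1/3$'' --- is what pushes you toward the problematic tool. The paper applies Theorem \ref{inequality for diagrams} to $D_0$ and to the filled-in (hence genuinely simply connected) diagram containing $D_0$ and $f$, combines this with $|\partial(D_0\cup\{f\})|+|\partial D_0|\le|\partial f|\le\ell$ to get $d\ge\frac{k}{2k+1}-\varepsilon/2$, and excludes $k=1$ (the source of the value $1/3$) not by edge bookkeeping but by the $C(2)$ condition of Lemma \ref{C of p}(i), which holds for all $d<1/2$ and forces $k=|D_0|\ge2$; this already gives the sharp threshold $2/5$. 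If you want to keep your route, you must first bound the number of self-intersections of $\partial f$, or pass to the filled-in diagram and redo the edge count there.
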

\begin{proof}\quad
    \begin{enumerate}[(i)]
        \item Notice that $d<2/5<1/2$ so the group is $C(2)$. We refer to the proof of Lemma \ref{C tilde of p} (i) with $p=2$ and without the inequality $2/(p+3)\geq 1/p$ (which is the reason that we need to separate the case $p=2$).
        \item By Lemma \ref{existence of diagrams} with some $\varepsilon<d-2/5$, there exists a reduced diagram of the following form (Figure \ref{not C tilde of 2}), which denies the $\tilde C(2)$ condition.
        
        \begin{figure}[h]
        \begin{center}
        \begin{tikzpicture}
            \fill [gray!20] circle (1.5);
            \draw [thick] circle (1.5);
            \draw [thick] (-0.5,0) circle (1);
            \draw [thick] (-1.5,0) -- (0.5,0);
            \node at (-0.5,0.2) {\scriptsize{$0.8\ell$}};
            \node at (-0.5,0.8) {\scriptsize{$0.2\ell$}};
            \node at (-0.5,-0.8) {\scriptsize{$0.2\ell$}};
            \node at (1.9,0) {\scriptsize{$0.6\ell$}};
        \end{tikzpicture}
        \end{center}
            \caption{A van Kampen diagram that is not $\tilde C(2)$.}
            \label{not C tilde of 2}
        \end{figure}
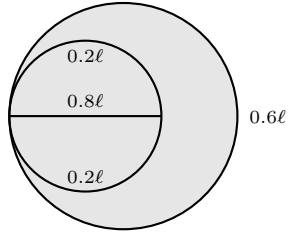
    \end{enumerate}
\end{proof}

There is also a phase transition of $\tilde C(p)$, at density $1/p$, for any $p\geq 3$.

\begin{lem}\label{C tilde of p}
    Let $G_\ell(m,d)$ be a random group at density $d$. Let $p\geq 3$ be an integer.
    \begin{enumerate}[(i)]
        \item If $d<1/p$, then a.a.s. $G_\ell(m,d)$ satisfies $\tilde C(p)$.
        \item If $d>1/p$, then a.a.s. $G_\ell(m,d)$ does not satisfy $\tilde C(p)$.
    \end{enumerate}
\end{lem}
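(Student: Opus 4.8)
The plan is to treat the two parts separately, with part (ii) following the same template as in Lemma \ref{C tilde of 2}(ii) and part (i) being the substantive direction. For part (ii), when $d>1/p$ I would apply Lemma \ref{existence of diagrams} with some $\varepsilon<d-1/p$ to a concrete sequence of planar simply connected 2-complexes: take an ``inner'' simply connected subdiagram $D'$ surrounded by exactly $p-1$ faces arranged in an annular ring, with the overlaps between consecutive faces and between faces and $D'$ chosen so that every sub-2-complex satisfies the required strict isoperimetric inequality $|\partial D''|\geq(1-2d+\varepsilon)|D''|\ell$. The natural choice makes each face share a fraction slightly above $1/(p+3)$ of its length with its neighbours and with the core (this is exactly where the hypothesis $1/(p+3)\geq$ nothing is needed, i.e. where $p\geq 3$ behaves uniformly, unlike $p=2$); one checks the inequality for the whole complex and for each face alone, which suffices by a standard convexity/counting argument. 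The resulting reduced van Kampen diagram has a simply connected subdiagram surrounded by only $p-1$ faces, so $\tilde C(p)$ fails.

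For part (i), suppose $d<1/p$ and, toward a contradiction, that a.a.s.\ there is a reduced van Kampen diagram $D$ containing a simply connected subdiagram $E$ surrounded by at most $p-1$ faces $f_1,\dots,f_{p-1}$. Consider the subdiagram $D_0 = E\cup f_1\cup\dots\cup f_{p-1}$; it is planar and simply connected, with $|D_0| = |E| + (p-1)$ faces. The key point is to bound $\Cancel(D_0)$ from below: the outer boundary of $E$ is entirely covered by edges shared with the $f_i$'s, and since $\partial E$ is a reduced loop of length at least $1$ (in fact each boundary piece of a face has bounded length $\leq\ell$), the total overlap length $\Cancel(D_0)\geq |\partial E| + (\text{overlaps among the }f_i)$. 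Meanwhile $|\partial D_0|\leq (p-1)\ell$ since the outer boundary of $D_0$ is made of arcs of the $f_i$. I would then feed $D_0$ into inequality \eqref{useful reduced inequality} (or directly Theorem \ref{inequality for 2-complex}), which gives $\Cancel(D_0)\leq (d+\varepsilon)|D_0|\ell$ for any complexity-$K$ diagram; combined with the lower bound on $\Cancel(D_0)$ and an Euler-characteristic count relating $|\partial E|$, the overlaps, and $|D_0|$, one derives $|E| + (p-1) \leq$ something forcing $d\geq 1/p - \varepsilon$, a contradiction for $\varepsilon$ small. Concretely the arithmetic should mirror the proof of Lemma \ref{C of p}: a subdiagram surrounded by $\leq p-1$ faces is ``too efficient'' (too much cancellation per face) to exist below density $1/p$.

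The main obstacle I anticipate is the bookkeeping in part (i) when $E$ is not a disk diagram — its boundary loop $\partial E$ need not be simple, so ``surrounded by $p-1$ faces'' must be made precise and the cancellation/Euler count must account for pinch points and for faces that touch $\partial E$ along several disjoint arcs. One clean way around this is to first reduce to the case where $E$ is a disk (passing to an innermost offending subdiagram, or cutting along a separating vertex), and separately invoke $\tilde C(2)$ — valid since $d<1/p\leq 1/3<2/5$ and Lemma \ref{C tilde of 2}(i) applies — to guarantee the individual faces $f_i$ have simple boundary loops, so that the overlaps are genuine arcs. A secondary technical point is ensuring $D_0$ has bounded complexity so that Theorem \ref{inequality for 2-complex} applies with a fixed $K$ before taking the a.a.s.\ limit; since $|D_0|\leq |E|+p-1$ and we may assume $|E|$ is bounded (an innermost counterexample, or an a.a.s.\ a priori bound on minimal offending subdiagrams from $C(p)$), this is routine but must be stated carefully.
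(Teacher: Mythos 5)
Your part (ii) is needlessly complicated: since $\tilde C(p)$ implies $C(p)$, the failure of $\tilde C(p)$ at $d>1/p$ follows in one line from Lemma \ref{C of p}(ii). No explicit diagram construction via Lemma \ref{existence of diagrams} is needed for $p\geq 3$; such a construction is only required for $p=2$, where the threshold ($2/5$) differs from $1/p$. Your sketched construction is in any case left unverified.

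The substantive problem is part (i), where the inequalities you state are quantitatively too weak precisely for $p=3,4$. You propose $\Cancel(D_0)\geq|\partial E|+(\text{overlaps among the }f_i)$ together with $\Cancel(D_0)\leq(d+\varepsilon)|D_0|\ell$. The overlaps among the $f_i$ may be zero (consecutive surrounding faces need only meet at vertices), so your chain reduces to $(1-2d-\varepsilon)|E|\ell\leq|\partial E|\leq\Cancel(D_0)\leq(d+\varepsilon)(|E|+p-1)\ell$, which gives $d\geq\frac{|E|}{3|E|+p-1}-O(\varepsilon)$. With the best available a priori bound $|E|\geq2$ (from $C(p)$), this is $d\geq\frac{2}{p+5}-O(\varepsilon)$, and $\frac{2}{p+5}<\frac{1}{p}$ for $p=3$ and $p=4$, so no contradiction with $d<1/p$ is reached in exactly the hardest cases. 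The paper's proof avoids this by applying the isoperimetric inequality of Theorem \ref{inequality for diagrams} to both $E$ and $D_0=E\cup f_1\cup\dots\cup f_{p-1}$, and observing that every edge of $\partial E$ and every edge of $\partial D_0$ lies on some $\partial f_i$ with no edge counted twice, whence $|\partial E|+|\partial D_0|\leq\sum_i|\partial f_i|\leq(p-1)\ell$; this yields $d\geq\frac{|E|}{2|E|+p-1}-\varepsilon/2\geq\frac{2}{p+3}-\varepsilon/2\geq\frac{1}{p}-\varepsilon/2$, where the last two steps use $|E|\geq2$ and $p\geq3$ --- the two hypotheses whose role in the arithmetic your sketch never pins down. (Your cancellation route could be repaired by strengthening the lower bound to $\Cancel(D_0)\geq|E^{(1)}|\geq(1-d-\varepsilon)|E|\ell$, counting the interior edges of $E$ as well as its boundary edges, but that is not what you wrote.) Your concerns about non-simple boundaries and bounded complexity are reasonable but secondary; the missing piece is the sharper edge count.
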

\begin{proof}\quad
\begin{enumerate}[(i)]
    \item Assume that at density $d<1/p$, a.a.s. there were a reduced diagram $D$ with a simply connected subdiagram $D'$ that is surrounded by $p-1$ faces $f_1,\dots,f_{p-1}$.
    
    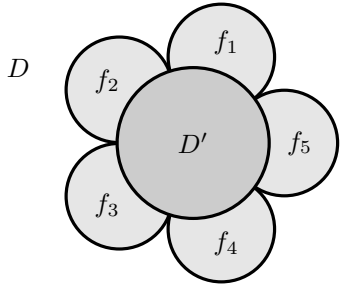
\begin{figure}[h]
    \begin{center}
    \begin{tikzpicture}
    \foreach \x in {1,...,5}{
        \fill[gray!20] (72*\x:1.2) circle (0.7);
        \draw[very thick] (72*\x:1.2) circle (0.7);
        \node at (72*\x:1.4) {$f_\x$};
        \begin{scope}[shift={(72*\x:1.2)}]
        \end{scope}
    }
    \fill[gray!40] (0,0) circle (1);
    \draw[very thick] (0,0) circle (1);
    \node at (0,0) {$D'$};
    \node at (-2.3,1) {$D$};
    \end{tikzpicture} 
    \end{center}
        \caption{A van Kampen diagram that is not $\tilde C(6)$}
        \label{fig small cancellation tilde p}
    \end{figure}
    
    Apply the isoperimetric inequality (Theorem \ref{inequality for diagrams}) on $D'$ and $D$ with some $\varepsilon>0$ to be determined, we have
    \[|\partial D'| \geq (1-2d-\varepsilon)|D'|\ell,\]
    \[|\partial D| \geq (1-2d-\varepsilon)(|D'|+p-1)\ell.\]
    On the other hand, \[|\partial D| + |\partial D'| \leq \sum_{i=1}^{p-1}|\partial f_i|\leq (p-1)\ell.\]
    So $(p-1)\ell\geq (1-2d-\varepsilon)(2|D'|+p-1)\ell$, or
    \[d\geq \frac{|D'|}{2|D'|+p-1}-\varepsilon/2.\]

    At $d<1/p$ the group satisfies $C(p)$ (Lemma\ref{C of p} (i)), so $|D'|\geq 2$. Together with $p\geq 3$, we have $\frac{|D'|}{2|D'|+p-1}\geq \frac{2}{3+p} \geq \frac{1}{p}$. Now we choose $\varepsilon = 1/p-d>0$, which leads to a contradiction.

    \item By Lemma \ref{C of p} (ii) and  the fact that $\tilde C(p)$ implies $C(p)$.
\end{enumerate}
\end{proof}

\begin{lem}\quad\label{connected on boundary}
    If $d<1/4$, then a.a.s. any reduced van Kampen diagram $D$ of $G_\ell(m,d)$ satisfies the following property:
    
    If $\gamma$ is a sub-path of the boundary loop $\partial D$ whose image is a geodesic in the group, then for any face $f$ of $D$, the intersection $\partial f\cap \gamma$ is connected.
\end{lem}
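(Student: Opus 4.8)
The plan is to argue by contradiction: suppose $\partial f \cap \gamma$ is disconnected, so it contains (at least) two maximal arcs $\alpha_1, \alpha_2$ that lie on $\gamma$, separated along $\partial f$ by an arc $\beta \subseteq \partial f$ that is not on $\gamma$, and separated along $\gamma$ by a sub-path $\gamma_0 \subseteq \gamma$ with interior disjoint from $\partial f$. Write $a_i$ for the endpoints of $\alpha_i$ that are joined by $\gamma_0$; then $\beta' := \gamma_0$ and the complementary boundary arc of $f$ between $a_1$ and $a_2$ (the one passing through the ``far side'' of $\partial f$, i.e.\ not through $\beta$, or the one through $\beta$ — whichever gives the cleaner estimate) bound a subdiagram $E$ of $D$ that, together with $f$, forms a disk subdiagram $E'$ of $D$ with $f$ on its boundary. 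I would set up $E'$ as the subdiagram of $D$ ``enclosed'' by $\gamma_0$ and the corresponding sub-arc of $\partial f$; since $D$ is planar and simply connected and $\gamma_0 \cup (\text{arc of }\partial f)$ is an embedded loop, such an $E'$ exists, is simply connected, and contains $f$.

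Next I would exploit that $\gamma$ is a geodesic in the group. Apply the isoperimetric inequality (Theorem \ref{inequality for diagrams}) to $E'$: with $n := |E'|\geq 1$ faces, $|\partial E'| \geq (1-2d-\varepsilon)n\ell$. On the other hand, $\partial E'$ decomposes as $\gamma_0$ plus an arc $\sigma$ of $\partial f$, and $|\sigma| \leq \ell$ (it's part of a single relator boundary), so $(1-2d-\varepsilon)n\ell \leq |\gamma_0| + \ell$. I also use geodesy of $\gamma$: the path $\gamma_0$ runs between two points of $\gamma$ that are connected inside $f$ by an arc of length $\leq \ell$, hence $|\gamma_0| \leq \ell$ (a geodesic sub-path cannot be longer than any path with the same endpoints, and the complementary arc of $\partial f$ has length $<\ell$). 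Combining, $(1-2d-\varepsilon)n\ell \leq 2\ell$, forcing $n \leq 2/(1-2d-\varepsilon)$; since $d<1/4$ we may pick $\varepsilon$ small enough that $1-2d-\varepsilon > 1/2$, giving $n \leq 3$, and a more careful version of the same bookkeeping (using that $f$ itself is one of the $n$ faces and that an $n$-face disk diagram of a $C(p)$ group for suitable $p$ has a long boundary) should push $n$ down to a genuine contradiction — in particular $E'$ would have to be a single face ($n=1$, i.e.\ $E'=f$), but then $\alpha_1$ and $\alpha_2$ are already connected along $\partial f$ outside $\gamma$ only if $\gamma$ backtracks, contradicting that $\gamma$ is reduced as a sub-path of $\partial D$.

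The cleanest route is probably to avoid the case analysis on ``which side'' by choosing, among all maximal arcs of $\partial f \cap \gamma$, two that are \emph{consecutive} along $\partial f$, so that the arc $\sigma$ of $\partial f$ between them has length $<\ell$ and meets $\gamma$ only at its endpoints; then the loop $\gamma_0 \sigma^{-1}$ bounds a genuine disk subdiagram $E'$ of $D$ (planarity), $|\partial E'| = |\gamma_0| + |\sigma| < 2\ell$ using geodesy of $\gamma$ to bound $|\gamma_0| \leq |\sigma| < \ell$, and Theorem \ref{inequality for diagrams} gives $|E'| < 2\ell/((1-2d-\varepsilon)\ell) < 4$ for $\varepsilon$ small, so $|E'| \leq 3$. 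Finally, invoking $\tilde C(4)$ (Lemma \ref{C tilde of p}(i), valid since $d<1/4$): the simply connected subdiagram $E'$ — or rather the subdiagram obtained by deleting the ``outer'' faces — would need to be surrounded by at least $4$ faces, which is incompatible with $|E'| \leq 3$ sitting against the boundary of $D$ along $\sigma$; this is the contradiction.

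I expect the main obstacle to be the topological setup rather than the estimates: making precise which subdiagram $E'$ one cuts out, ensuring it is a disk (or at least simply connected so that Theorem \ref{inequality for diagrams} and $\tilde C(4)$ apply), and handling the degenerate configurations where $\gamma_0$ has length $0$ or shares more than a point with $\partial f$, or where $f$ wraps around so that $\partial f$ is not simple a priori (here one uses $\tilde C(2)$, valid since $d<1/4<2/5$, to know $\partial f$ is simple). Once the disk subdiagram is correctly identified, the inequality $(1-2d-\varepsilon)|E'|\ell \leq |\partial E'| < 2\ell$ together with $d<1/4$ does the rest.
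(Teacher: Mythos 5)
Your overall strategy (cut out the subdiagram between two adjacent components of $\partial f\cap\gamma$ and hit it with the isoperimetric inequality) is the paper's strategy, but the quantitative step that actually produces the contradiction is missing, and neither of your proposed ways to finish closes the gap. You bound the boundary of your subdiagram by $|\gamma_0|+|\sigma|<2\ell$, which only yields $|E'|\leq 3$; that is not a contradiction, since the configuration with $2$ or $3$ faces is not excluded by anything you prove. Your fallback via $\tilde C(4)$ does not apply: the region between $\gamma_0$ and $\sigma$ is \emph{not} an interior subdiagram surrounded by faces in the sense of Definition \ref{def C tilde of p} — it is glued to $\partial D$ along $\gamma_0\subset\gamma\subset\partial D$ — so the small cancellation condition says nothing about it. Your other suggestion (``more careful bookkeeping should push $n$ down'') is exactly the step that needs an idea, and as written you only rule out $n=1$.

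The missing idea in the paper's proof is a substitution trick that gains a factor of $2$. Take $D'$ to be the face $f$ \emph{together with} the region it encloses against $\gamma$; write $\partial f = r\, v_1\, q\, v_2$ where $v_1,v_2$ are the two adjacent components of $\partial f\cap\gamma$, $q$ is the arc of $\partial f$ facing the enclosed region, and $r$ is the outer arc. Then $\partial D' = r\, v_1\, p\, v_2$ where $p\subset\gamma$ is the geodesic segment between the components, and geodesy gives $|p|\leq|q|$, hence
\[|\partial D'| = |r|+|v_1|+|p|+|v_2| \leq |r|+|v_1|+|q|+|v_2| = |\partial f|\leq \ell,\]
i.e.\ a bound of $\ell$ rather than your $2\ell$. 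Since $q$ does not meet $\gamma$, the enclosed region contains at least one face, so $|D'|\geq 2$, and the isoperimetric inequality $(1-2d-\varepsilon)|D'|\ell\leq|\partial D'|\leq\ell$ with $1-2d-\varepsilon>1/2$ forces $|D'|<2$ — the contradiction. Without replacing $p$ by $q$ \emph{inside the expression for} $|\partial D'|$ (rather than just bounding $|p|\leq\ell$ separately), the estimate is too weak at density $d<1/4$, which is precisely where your argument stalls.
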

\begin{proof}
    Suppose that there were a face $f$ of $D$ such that $\partial f \cap \gamma$ has at least two connected components. Let $v_1$ be one of the components and $v_2$ a component next to it. Let $p$ be the segment on $\gamma$ between $v_1$ and $v_2$. The face $f$ together with the segment $p$ bounds a disk subdiagram $D'$ of $D$, in which the path $v_1pv_2$ is a geodesic. In $D'$, let $q$ be the interior segment of $\partial f$ and $r$ be the exterior segment of $\partial f$ excluding $v_1$ and $v_2$ (Figure \ref{figure not connected boundary}). The path $r$ may intersect $\gamma$ but it does not matter.
\begin{figure}[h]
\begin{center}
\begin{tikzpicture}
\begin{scope}
    \clip (-1.55,0) rectangle (1.55,1.55);
    \fill [gray!20] circle (1.5);
    \fill [gray!40] (0.1,0) circle (0.7);
    \draw [thick] (0,0) circle (1.5);
    \draw [thick] (0.1,0) circle (0.7);
\end{scope}
\draw [thick] (-2,0) -- (2,0);
\node at (-1.3,1.3) {$D'$};
\node at (0,1) {$f$};
\node at (-1,-0.2) {\scriptsize{$v_1$}};
\node at (1.2,-0.2) {\scriptsize{$v_2$}};
\node at (0.7,0.6) {\scriptsize{$q$}};
\node at (1.2,1.2) {\scriptsize{$r$}};
\node at (0.1,-0.2) {\scriptsize{$p$}};
\node at (2.2,0) {\scriptsize{$\gamma$}};
\end{tikzpicture}
\end{center}
    \caption{A face having two connected components on a geodesic.}
    \label{figure not connected boundary}
\end{figure}
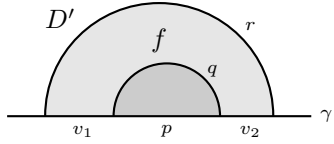

    Note that $q$ does not intersect $\gamma$, so the two paths $p$ and $q$ bounds a subdiagram, and $|D'|\geq 2$. Since $\gamma$ is a geodesic, we have $|p|\leq |q|$, so \[|\partial D'| = |r|+|v_1|+|p|+|v_2| \leq |r|+|v_1|+|q|+|v_2| =  |\partial f|\leq \ell.\] Apply the isoperimetric inequality on $D'$ with $d<1/4$ and some small $\varepsilon$, we get
    \[|\partial D'| \geq (1-2d-\varepsilon)|D'|\ell > \frac{1}{2}|D'|\ell.\]
    The above two inequalities give $|D'|<2$, which is impossible.
\end{proof}

\section{Diagrams between parallel geodesics (Band diagrams)}

Let $x$ and $y$ be vertices in a 2-complex $Y$. A \textit{geodesic} from $x$ to $y$, denote by $[x, y]\subset Y$, is a path $\gamma:[0,t]\to Y$ such that $\gamma(0)=x$, $\gamma(t)=y$ and no other path can be shorter. Denote by $|x - y|$ the distance between $x$ and $y$, which is the length of any geodesic path joining $x$ and $y$. 

A \textit{bi-infinite geodesic} is a bi-infinite path such that every sub-path is a geodesic. If $x$ and $y$ are already on some given geodesic $\gamma$, then $[x, y]$ refers to the sub-geodesic of $\gamma$ joining $x$ and $y$, unless otherwise specified.

\begin{defi} Let $G = \Pres{X}{R}$ be a $\delta$-hyperbolic group. Two geodesics $\gamma_1:[0,t_1]\to\Cay(X,R)$ and $\gamma_2:[0,t_2]\to\Cay(X,R)$ are called \textit{parallel} if they do not intersect and their endpoints are respectively $10\delta$-close. That is, $|\gamma_1(0)-\gamma_2(0)|\leq 10\delta$ and $|\gamma_1(t_1)-\gamma_2(t_2)|\leq 10\delta$.
\end{defi} 

By convention, two bi-infinite geodesics are parallel if they do not intersect and they have the same pair of boundary points on $\partial\Cay(X,R)$ (refer \cite{CDP90}).\\

\begin{defilem}[Band diagrams]\label{band diagram} Let $G = \langle X|R \rangle$ be a $\delta$-hyperbolic space. Let $\gamma_1:[0,t_1]\to\Cay(X,R)$ and $\gamma_2:[0,t_2]\to\Cay(X,R)$ be two parallel geodesics of lengths at least $20\delta$ in $G$. There exists a \textup{reduced disk diagram} $D$ together with an associated map $\varphi:D\to \Cay(G,X)$ such that:
\begin{itemize}
    \item There are four distinguished vertices $x_1,y_1,y_2,x_2$ on $\partial D$ that divide $\partial D$ into four paths $\overline{x_1y_1}$, $\overline{y_1y_2}$, $\overline{y_2x_2}$ and $\overline{x_2x_1}$,
    
    \item $\varphi(\overline{x_iy_i})$ is a sub-geodesic of $\gamma_i$ for $i = 1,2$,

    \item $\varphi(\overline{x_1x_2})$ is a sub-geodesic of some $[\gamma_1(0),\gamma_2(0)]$ and $\overline{y_1y_2}$ is a sub-geodesic of some $[\gamma_1(t_1),\gamma_2(t_2)]$.
\end{itemize}
The diagram $D$ (together with the map $\varphi$) constructed above is called a \textup{band diagram} between the two geodesics $\gamma_1$ and $\gamma_2$. The vertices $x_1,x_2,y_1,y_2$ on $\partial D$ are called the \textup{corners} of $D$.
\end{defilem}

\begin{figure}[h]
\begin{center}
\begin{tikzpicture}
\fill [gray!40] (-3,0) -- (3,0) -- (3,2) -- (-3,2);
\node at (0,1) {$\varphi(D)$};

\draw [very thick] (-3.5,0) -- (3.5,0);
\draw [very thick] (-3.5,2) -- (3.5,2);
\node at (3.8,0) {$\gamma_2$};
\node at (3.8,2) {$\gamma_1$};

\draw [thick] (-3,0) -- (-3,2);
\draw [thick] (3,0) -- (3,2);

\fill (-3,0) circle (0.05);
\fill (3,0) circle (0.05);
\fill (-3,2) circle (0.05);
\fill (3,2) circle (0.05);
\fill (-3.5,2) circle (0.05);
\fill (3.5,2) circle (0.05);
\fill (-3.5,0) circle (0.05);
\fill (3.5,0) circle (0.05);
\node at (-3,2.2) {\scriptsize{$\varphi(x_1)$}};
\node at (3,2.2) {\scriptsize{$\varphi(y_1)$}};
\node at (-3,-0.2) {\scriptsize{$\varphi(x_2)$}};
\node at (3,-0.2) {\scriptsize{$\varphi(y_2)$}};

\end{tikzpicture}
\end{center}
    \caption{A band diagram.}
    \label{diagram between geodesics}
\end{figure}
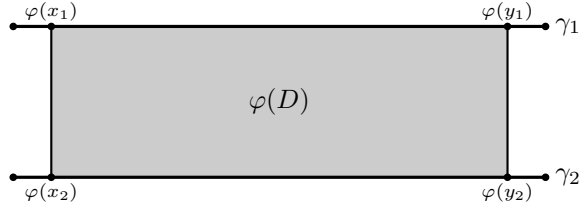

\begin{proof} 
    Choose geodesics $[\gamma_1(0), \gamma_2(0)]$ and $[\gamma_1(t_1), \gamma_2(t_2)]$ such that they intersect $\gamma_1$ and $\gamma_2$, respectively, with only one connected component.
    
    Let $[x_1',x_2']$ be the sub-geodesic of $[\gamma_1(0), \gamma_2(0)]$ that intersect $\gamma_1$ and $\gamma_2$ respectively on one single vertex. Similarly, obtain $[y_1',y_2']\subset [\gamma_1(t_1), \gamma_2(t_2)]$. Since the lengths of $\gamma_1$ and $\gamma_2$ are at least $20\delta$, the geodesics $[x_1',x_2']$ and $[y_1',y_2']$ do not intersect, and the loop $p = [x_1',y_1'] \cup [y_1',y_2'] \cup [y_2',x_2'] \cup [x_2',x_1']$ has no proper subloop.
    
    By van Kampen's lemma, there exists a reduced van Kampen diagram $D$ bounded by the labeling word of $p$, and a natural map $\varphi:D\to \Cay(X,R)$ that maps $\partial D$ isomorphically on the loop. Since the word of $p$ has no proper subword that is trivial in $G$, $D$ is homeomorphic to a disk. The four distinguished vertices $x_1,x_2,y_1,y_2$ on $\partial D$ are the pre-images of $x_1',x_2',y_1',y_2'$.
\end{proof}

\quad

\begin{thm}\label{parallel diagram in one forth}
    If $d<1/4$, then a.a.s. for any pair of parallel geodesics $\gamma_1$, $\gamma_2$ in $G_\ell(m,d)$ of lengths at least $\frac{1200}{1-4d}\ell$ and any band diagram $D$ between them, there is a face of $D$ that intersects the two boundary geodesics.
\end{thm}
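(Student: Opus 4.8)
The plan is to argue by contradiction. Suppose there exist parallel geodesics $\gamma_1,\gamma_2$ of lengths at least $\frac{1200}{1-4d}\ell$ and a band diagram $D$ between them, with corners $x_1,y_1,y_2,x_2$ as in Definition-Lemma~\ref{band diagram}, such that no face of $D$ shares an edge with both $\overline{x_1y_1}$ and $\overline{x_2y_2}$; this is implied by the negation of the conclusion. I fix once and for all a small $\varepsilon>0$ depending only on $d$ --- concretely $\varepsilon=\frac{1-4d}{4}$ works --- and work on the a.a.s.\ event on which both the isoperimetric inequality of Theorem~\ref{inequality for diagrams} (for this $\varepsilon$) and Lemma~\ref{connected on boundary} hold. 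The goal is a numerical contradiction with the length hypothesis.

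The first step collects the metric estimates that come for free from the band diagram structure. Since $d<1/4$ we have $\delta=\frac{4\ell}{1-2d}<8\ell$, hence $20\delta<160\ell$. The paths $\varphi(\overline{x_1x_2})$ and $\varphi(\overline{y_1y_2})$ are sub-geodesics of geodesics joining pairs of $10\delta$-close points, so each has length at most $10\delta$; writing $a_i:=|\overline{x_iy_i}|$ this gives $|\partial D|\leq a_1+a_2+20\delta$. Moreover each endpoint of $\varphi(\overline{x_iy_i})$ lies both on $\gamma_i$ and on one of those short geodesics, hence at $\gamma_i$-distance at most $10\delta$ from the corresponding extremity of $\gamma_i$; therefore $a_i\geq|\gamma_i|-20\delta\geq\frac{1200}{1-4d}\ell-20\delta$.

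The heart of the argument is a face count. Because $D$ is a disk diagram, each edge of $\overline{x_1y_1}$ lies on the boundary of exactly one face of $D$; by Lemma~\ref{connected on boundary} each face meets $\overline{x_1y_1}$ along a single (possibly empty) arc, so the faces meeting $\overline{x_1y_1}$ in at least one edge are pairwise distinct and their arcs partition $\overline{x_1y_1}$. Each such arc is a sub-path of the geodesic $\gamma_1$, hence a geodesic, while the complementary part of that face's boundary loop is a path of length at most $\ell$ with the same endpoints; so the arc has length at most $\ell/2$, and there are at least $2a_1/\ell$ faces meeting $\overline{x_1y_1}$. Likewise at least $2a_2/\ell$ faces meet $\overline{x_2y_2}$, and by the contradiction hypothesis these two families are disjoint, whence $|D|\geq\frac{2}{\ell}(a_1+a_2)$.

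Finally I would feed this into the isoperimetric inequality: Theorem~\ref{inequality for diagrams} gives $|\partial D|\geq(1-2d-\varepsilon)|D|\ell\geq 2(1-2d-\varepsilon)(a_1+a_2)$, and combined with $|\partial D|\leq a_1+a_2+20\delta$ this yields $(1-4d-2\varepsilon)(a_1+a_2)\leq 20\delta$. With $\varepsilon=\frac{1-4d}{4}$ the left-hand side is at least $\frac{1-4d}{2}\cdot 2\big(\frac{1200}{1-4d}\ell-20\delta\big)=1200\ell-20(1-4d)\delta$, so $1200\ell\leq 20(2-4d)\delta\leq 40\delta<320\ell$, which is absurd. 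I expect the genuinely delicate points to be purely structural: justifying that every boundary edge of a disk diagram lies on exactly one face and combining this with Lemma~\ref{connected on boundary} to make the bound $|D|\geq\frac{2}{\ell}(a_1+a_2)$ fully rigorous (in particular handling faces whose boundary loop is not embedded), together with the bookkeeping $a_i\geq|\gamma_i|-20\delta$ from the band diagram construction; the isoperimetric step itself and the arithmetic are then routine, and the constant $1200$ leaves a comfortable margin.
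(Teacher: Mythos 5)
Your proposal is correct and follows essentially the same route as the paper: the same contradiction setup, the same use of Lemma~\ref{connected on boundary} to ensure each face contributes a single geodesic arc of length at most $\ell/2$ (so $|D|\gtrsim 2(a_1+a_2)/\ell$), and the same application of the isoperimetric inequality with $\varepsilon=(1-4d)/4$. The only cosmetic difference is that you count faces via their arcs directly while the paper counts maximal arcs, and your bookkeeping of the corner offsets is slightly sharper, yielding a contradiction with extra margin.
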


\begin{proof}
    Recall that a.a.s. $G_\ell(m,d)$ is $\delta$-hyperbolic with $\delta = \frac{4\ell}{1-2d}$ (Theorem \ref{inequality for diagrams}). At $d<1/4$, assume simply $\delta\leq 10\ell$.

    Since $\frac{1200}{1-4d}\ell\geq 200\ell\geq 20\delta$, there is a band diagram $D$ between $\gamma_1,\gamma_2$. Let $L$ be the length of $\gamma_1$. As the length of $\gamma_2$ is at most $L+20\delta$, we have \[|\partial D|\leq 2L+400\ell.\]
   
    Let $x_1,x_2,y_1,y_2$ be the corners of $D$. Let $k_i$ be the number of maximal arcs on the boundary geodesic $[x_i,y_i]$ for $i=1,2$. Suppose, by contradiction, that there were no face of $D$ that intersects $[x_1,y_1]$ and $[x_2,y_2]$ simultaneously. Given that every face can intersect a boundary geodesic at most once (Lemma \ref{connected on boundary}), every face of $D$ contributes at most one maximal arc on the two geodesics. Thus, we have $k_1+k_2\leq |D|$.
     
\begin{figure}[h]
\begin{center}
\begin{tikzpicture}
\fill [gray!40] (-3,0) -- (3,0) -- (3,2) -- (-3,2);
\node at (0,1) {$D$};

\draw [thick] (-3,0) -- (3,0);
\draw [thick] (-3,2) -- (3,2);
\draw [thick] (-3,0) -- (-3,2);
\draw [thick] (3,0) -- (3,2);

\fill (-3,0) circle (0.05);
\fill (3,0) circle (0.05);
\fill (-3,2) circle (0.05);
\fill (3,2) circle (0.05);
\node at (-3,2.2) {\scriptsize{$x_1$}};
\node at (3,2.2) {\scriptsize{$y_1$}};
\node at (-3,-0.2) {\scriptsize{$x_2$}};
\node at (3,-0.2) {\scriptsize{$y_2$}};
\node at (0,2.3) {\scriptsize{$k_1$ maximal arcs}};
\node at (0,-0.3) {\scriptsize{$k_2$ maximal arcs}};

\end{tikzpicture}
\end{center}
    \caption{A diagram with $k_i$ maximal arcs in $[x_i,y_i]$.}
\end{figure}
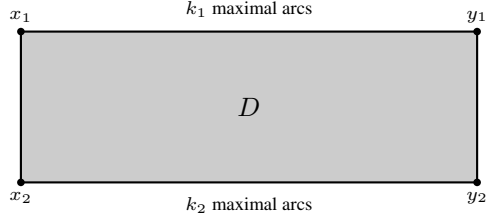
    
    On the other hand, for $i=1,2$, since $[x_i,y_i]$ is a geodesic word, any of its maximal arcs cannot be longer than $\ell/2$, which gives $k_i\times\frac{\ell}{2}\geq |[x_i,y_i]|\geq L-200\ell$. Therefore, we get \[|D|\geq k_1+k_2\geq \frac{4L}{\ell}-800.\]

    Apply the isoperimetric inequality $|\partial D| \geq (1-2d-\varepsilon)\ell|D|$ (Theorem \ref{inequality for diagrams}), we get
    \[2L+400\ell \geq (1-2d-\varepsilon)(4L-800\ell),\]
    or
    \[L\leq \frac{200(3-4d-2\varepsilon)\ell}{1-4d-2\varepsilon}.\]   
    
    With $\varepsilon = (1-4d)/4$, we obtain $L<\frac{1200}{1-4d}\ell$, which leads to a contradiction.

\end{proof}

\quad

A face in a band diagram $D$ that intersects the two boundary geodesics is called a \textit{1-layer face}. If $D$ has only 1-layer faces, it is called a \textit{1-layer band diagram}.

\begin{lem}\label{one layer diagram} Let $G_\ell(m,d)$ be a random group at density $d<1/6$. If $D$ be a band diagram of $G_\ell(m,d)$ consisting of at least two 1-layer faces, then every face of $D$ between these two faces is a 1-layer face.
\end{lem}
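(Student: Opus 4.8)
The plan is to reduce the statement to the case of two \emph{consecutive} 1-layer faces. Since $d<1/6<2/5$, a.a.s.\ $G_\ell(m,d)$ satisfies $\tilde C(2)$ (Lemma \ref{C tilde of 2}), so every face of the reduced disk diagram $D$ has a simple boundary loop; combined with Lemma \ref{connected on boundary}, a 1-layer face $f$ meets $\partial D$ in exactly two disjoint sub-arcs (one on each of $\gamma_1,\gamma_2$), hence separates the disk $D$. Thus the 1-layer faces of $D$ are linearly ordered from the $x$-side to the $y$-side, and it suffices to prove: if $f$ (left) and $f'$ (right) are 1-layer faces of $D$ with no 1-layer face of $D$ strictly between them, then \emph{no} face of $D$ lies strictly between $f$ and $f'$. (Given this, if some non-1-layer face $g$ were squeezed between two 1-layer faces, choosing $f,f'$ to be the 1-layer faces of $D$ nearest $g$ on either side yields the contradiction.)

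So let $E\subseteq D$ be the subdiagram strictly between such $f$ and $f'$, and suppose $|E|\ge 1$. Then $E$ is a reduced disk diagram whose boundary loop reads $a\cdot\sigma'\cdot b^{-1}\cdot\sigma^{-1}$, with $a\subseteq\gamma_1$, $b\subseteq\gamma_2$ sub-geodesics and $\sigma\subseteq\partial f$, $\sigma'\subseteq\partial f'$ transversal sub-arcs, so $|\sigma|,|\sigma'|<\ell$. No face of $E$ is 1-layer, so by Lemma \ref{connected on boundary} each face $g$ of $E$ meets $\gamma_1\cup\gamma_2$ in at most one sub-arc, lying inside a single $\gamma_i$; being a geodesic sub-arc of the relator boundary $\partial g$ of length $\le\ell$, that arc has length $\le\ell/2$ (its complement in $\partial g$ joins the same two points, hence is at least as long). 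Since the faces of $E$ meeting $\gamma_i$ tile $a$ and $b$, we get $|a|+|b|\le\tfrac12|E|\ell$, so $|\partial E|\le\tfrac12|E|\ell+2\ell$. Applying the isoperimetric inequality (Theorem \ref{inequality for diagrams}) to $E$ with a small $\varepsilon=\varepsilon(d)$ gives $|\partial E|\ge(1-2d-\varepsilon)|E|\ell$, hence $(\tfrac12-2d-\varepsilon)|E|\le 2$; since $d<1/6$ this bounds $|E|$ by an absolute constant $C_0$ (e.g.\ $C_0=24$), and also $|a|,|b|\le\tfrac12 C_0\ell$. Moreover the face of $E$ incident to the corner where $\sigma$ meets $\gamma_1$ touches $\gamma_1$ (and, being non-1-layer, misses $\gamma_2$), while the face incident to the corner where $\sigma$ meets $\gamma_2$ touches $\gamma_2$ and misses $\gamma_1$; these are distinct, so $|E|\ge 2$ and $E$ is at least two layers thick somewhere.

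It then remains to rule out the finitely many shapes with $2\le|E|\le C_0$. I would argue by small cancellation: every face of $E$ meeting neither $\gamma_1$ nor $\gamma_2$ is an interior face of $D$, hence surrounded by $\ge6$ faces, all among $E\cup\{f,f'\}$ (Lemma \ref{C of p}, valid since $d<1/6$); and every edge-arc shared by two faces — the pieces composing $\sigma,\sigma'$, the internal edges of $E$, and the ``seams'' between consecutive layers — has length $<2(d+\varepsilon)\ell<\ell/3$ by \eqref{useful diagram inequality}. Feeding these bounds back into the isoperimetric inequality applied to $E$ and to its natural sub-subdiagrams (the layer of $E$ adjacent to $\gamma_1$ and what lies below it) should show that the short boundary geodesics $a,b$ are too short to meet $|\partial E|\ge(1-2d-\varepsilon)|E|\ell$. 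Reconciling this length accounting along $a,b$ with the $C(6)$ incidence bound on the interior faces and the piece-length bounds — possibly needing the stronger $\tilde C(6)$ condition of Lemma \ref{C tilde of p} on a suitable sub-subdiagram, or an argument that genuinely uses that $\gamma_1,\gamma_2$ are \emph{parallel} rather than two arbitrary geodesics — is the step I expect to be the main obstacle.
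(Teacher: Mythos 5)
Your reduction to consecutive 1-layer faces, the separation argument via $\tilde C(2)$, the bound $|a|+|b|\le\frac12|E|\ell$ from Lemma \ref{connected on boundary}, and the observation that $|E|\ge 2$ are all sound and parallel what the paper does. But the proof has a genuine gap exactly where you say it does: applying the isoperimetric inequality to $E$ alone, with $|\partial E|\le \frac12|E|\ell+2\ell$, only yields $(\frac12-2d-\varepsilon)|E|\le 2$, i.e.\ $|E|=O(1)$, and none of the machinery you then invoke ($C(6)$ incidence counts, piece-length bounds, a hypothetical use of parallelism) is actually needed or carried out to kill the finitely many remaining cases.

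The missing idea is to apply the isoperimetric inequality a \emph{second} time, to the enlarged diagram $E^{+}:=E\cup\{f,f'\}$, and add the two inequalities. Writing $u,u'$ for the arcs $\partial f\cap\partial E$, $\partial f'\cap\partial E$ (your $\sigma,\sigma'$) and $s_1,s_2$ for $a,b$, one has
\[
|\partial E|+|\partial E^{+}|=\bigl(|u|+|u'|+|s_1|+|s_2|\bigr)+\bigl(|\partial f|-|u|+|\partial f'|-|u'|+|s_1|+|s_2|\bigr)=|\partial f|+|\partial f'|+2|s_1|+2|s_2|,
\]
so the transversal arcs cancel and the geodesic segments are counted \emph{twice}, while the two bounding faces cost only $|\partial f|+|\partial f'|\le 2\ell$ in total. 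With $2|s_1|+2|s_2|\le|E|\ell$ and $|E^{+}|=|E|+2$, summing the two isoperimetric inequalities gives $(1-2d-\varepsilon)(2|E|+2)\le 2+|E|$, which for $|E|\ge 2$ forces $d\ge 1/6-\varepsilon/2$ — the desired contradiction, with no case analysis on shapes. Your single application of the inequality loses precisely this factor-of-two gain on $s_1,s_2$ and wastes $\partial f,\partial f'$ by charging only their inner arcs. A secondary omission: your boundary word $a\cdot\sigma'\cdot b^{-1}\cdot\sigma^{-1}$ presupposes that $f$ and $f'$ are disjoint; the paper also treats the degenerate configurations where they share edges on one or both geodesics (handled by $\tilde C(3)$, resp.\ by a one-diagram isoperimetric estimate with $|\partial E^{+}|\le 2\ell$), and these need to be addressed separately.
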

\begin{proof}
    
Suppose, by contradiction, that there were 1-layer faces $f_1,f_2$ of $D$ that bound a non-empty subdiagram $D_{int}$ having no 1-layer faces. Denote by $D_{ext}$ the union of $D_{int}$ with $f_1$ and $f_2$. There are three cases:

\begin{enumerate}[\textbf{Case} 1.]
    \item $f_1$ and $f_2$ intersect at the two boundary geodesics.
    
    \begin{figure}[h]
\begin{center}
\begin{tikzpicture}
\fill [gray!20] (-2,0) -- (2,0) -- (2,2) -- (-2,2);
\draw (0,0) -- (0,2);
\fill [gray!40] (0,1) circle (0.7);
\node at (-1.2,1) {$f_1$};
\node at (1.2,1) {$f_2$};
\node at (-2.5,1.5) {$D_{ext}$};
\node at (0,1) {$D_{int}$};
\draw [thick] (-2,0) -- (2,0);
\draw [thick] (-2,2) -- (2,2);
\draw [thick] (-2,0) -- (-2,2);
\draw [thick] (2,0) -- (2,2);
\draw (0,1) circle (0.7);
\end{tikzpicture}
\end{center}
    \caption{Case 1 of $D_{int}$}
    \end{figure}
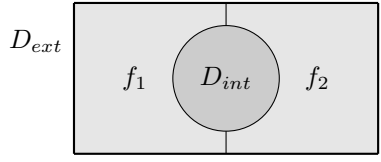

    By Lemma \ref{C tilde of p}, at density $d<1/3$ the group satisfies the $\tilde C(3)$ condition, but the subdiagram $D_{int}$ is surrounded by 2 faces. Contradiction.
    
    \item $f_1$ and $f_2$ intersect at one of the two boundary geodesics. Other intersections would lead to Case 1.
    
    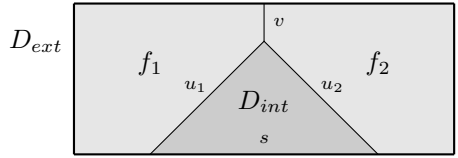
\begin{figure}[h]
\begin{center}
\begin{tikzpicture}
\fill [gray!20] (-2.5,0) -- (2.5,0) -- (2.5,2) -- (-2.5,2);
\fill [gray!40] (-1.5,0) -- (0,1.5) -- (1.5,0);
\node at (-1.5,1.2) {$f_1$};
\node at (1.5,1.2) {$f_2$};
\node at (-0.9,0.9) {\scriptsize{$u_1$}};
\node at (0.9,0.9) {\scriptsize{$u_2$}};
\node at (0.2,1.75) {\scriptsize{$v$}};
\node at (0,0.2) {\scriptsize{$s$}};
\node at (0,0.7) {$D_{int}$};
\node at (-3,1.5) {$D_{ext}$};
\draw [thick] (-2.5,0) -- (2.5,0);
\draw [thick] (-2.5,2) -- (2.5,2);
\draw [thick] (-2.5,0) -- (-2.5,2);
\draw [thick] (2.5,0) -- (2.5,2);
\draw (-1.5,0) -- (0,1.5) -- (1.5,0);
\draw (0,1.5) -- (0,2);
\end{tikzpicture}
\end{center}
    \caption{Case 2 of $D_{int}$}
    \end{figure}
    
    Let $u_i$ be the intersection path of $f_1$ and $D_{int}$ for $i=1,2$, $s$ be the remaining path on $\partial D_{int}$ and $v$ be the intersection path of $f_1,f_2$ (which may be empty). 
    
    Let us estimate the boundary length of $D_{ext}$. First, $|\partial D_{ext}| = |\partial f_1|+|\partial f_2|+|s|-|u_1|-|u_2|-2|v|$. Since $s$ is a geodesic path, it is shorter than $|v_1|+|v_2|$. We have
    \[|\partial D_{ext}|\leq |\partial f_1|+|\partial f_2|\leq 2\ell.\]
    
    Together with the isoperimetric inequality 
    \[|\partial D_{ext}|\geq (1-2d-\varepsilon)|D_{ext}|\ell,\] 
    
    we get
    \[d\geq \frac{|D_{ext}|-2}{2|D_{ext}|}-\varepsilon/2.\]

    As $|D_{ext}|\geq 3$, we have $d\geq 1/6-\varepsilon/2$, contradiction with any $\varepsilon<1/6-d$.

    \item $f_1$ and $f_2$ do not intersect at boundary geodesics. Other intersections would lead to Case 2.
    
    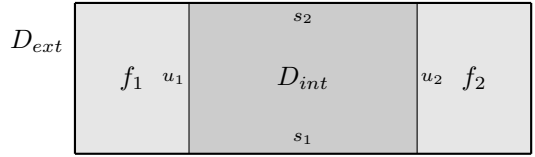
\begin{figure}[h]
\begin{center}
\begin{tikzpicture}
\fill [gray!20] (-3,0) -- (3,0) -- (3,2) -- (-3,2);
\fill [gray!40] (-1.5,0) -- (1.5,0) -- (1.5,2) -- (-1.5,2);
\node at (-2.25,1) {$f_1$};
\node at (2.25,1) {$f_2$};
\node at (-1.7,1) {\scriptsize{$u_1$}};
\node at (1.7,1) {\scriptsize{$u_2$}};
\node at (0,0.2) {\scriptsize{$s_1$}};
\node at (0,1.8) {\scriptsize{$s_2$}};
\node at (0,1) {$D_{int}$};
\node at (-3.5,1.5) {$D_{ext}$};
\draw [thick] (-3,0) -- (3,0);
\draw [thick] (-3,2) -- (3,2);
\draw [thick] (-3,0) -- (-3,2);
\draw [thick] (3,0) -- (3,2);
\draw (-1.5,0) -- (-1.5,2);
\draw (1.5,0) -- (1.5,2);
\end{tikzpicture}
\end{center}
    \caption{Case 3 of $D_{int}$}
    \end{figure}
    
    Let $u_i$ be the intersection path of $f_i$ and $D_{int}$ for $i=1,2$, and let $s_i$ be the intersection path of $\gamma_i$ and $D_{int}$ for $i=1,2$.
    
    By Lemma \ref{connected on boundary}, if a face $f$ in $D_{int}$ intersects $s_1$, then the intersection is a connected maximal arc, and we have $|\partial f\cap s_1|\leq \frac{1}{2}|\partial f|$. So
    \[|s_1| \leq \sum_{f\in D_{int}, f\cap s_1\neq \vide} |\partial f\cap s_1|\leq \frac{1}{2}\sum_{f\in D_{int}, f\cap s_1\neq \vide} |\partial f|.\]

    By the same inequality on $s_2$ and the hypothesis that $D_{int}$ has no 1-layer face, we get
    \[|s_1|+|s_2| \leq \frac{1}{2}\sum_{f\in D_{int}} |\partial f|\leq \frac{1}{2}|D_{int}|\ell.\]

    So \[|\partial D_{int}|+|\partial D_{ext}| = |\partial f_1|+|\partial f_2|+2|s_1|+2|s_2|\leq 2\ell+|D_{int}|\ell.\]
    Together with the isoperimetric inequalities on $D_{int}$ and on $D_{ext}$: 
    \[(1-2d-\varepsilon)|D_{int}|\ell \leq |\partial D_{int}|,\]
    \[(1-2d-\varepsilon)|D_{ext}|\ell \leq |\partial D_{ext}|;\]
    we get
    \[(1-2d-\varepsilon)(|D_{int}|+|D_{ext}|)\ell\leq 2\ell+|D_{int}|\ell.\]
    With the fact that $|D_{ext}|=|D_{int}|+2$, we obtain
    \[d\geq \frac{|D_{int}|}{|D_{int}|+4}-\varepsilon/2.\]
    Since $|D_{int}|\geq 2$, we have $d\geq 1/6-\varepsilon/2$, contradiction with any $\varepsilon<1/6-d$.
\end{enumerate}
\end{proof}

\begin{rem}
    The estimations in Case 2 and Case 3 hold for any density $d<1/4$. We actually have an upper bound on the number of faces $|D_{int}|$ between two 1-layer faces that depends only on the density $d$. In Case 2 we have $|D_{int}|< \frac{4d}{1-2d}$; and in Case 3 we have $|D_{int}|< \frac{4d}{1-4d}$.
\end{rem}

\begin{thm}\label{parallel diagram in one sixth}
     Let $L\geq10000\ell$. If $d<1/6$, then a.a.s. for any pair of parallel geodesics $\gamma_1, \gamma_2$ of $G_\ell(m,d)$ of lengths at least $L$, there exists a pair of parallel sub-geodesics $\gamma_1',\gamma_2'$ of lengths at least $L-7200\ell$ having a 1-layer band diagram.

     In addition, the endpoints of $\gamma_i'$ are $3600\ell$-close to the endpoints of $\gamma_i$ for $i = 1,2$; and any pair of parallel sub-geodesics $\gamma_1'', \gamma_2''$ of $\gamma_1', \gamma_2'$ has a 1-layer band diagram by restricting a 1-layer band diagram of $\gamma_1', \gamma_2'$.
\end{thm}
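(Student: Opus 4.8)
The plan is to take any band diagram $D$ between $\gamma_1$ and $\gamma_2$, use Theorem \ref{parallel diagram in one forth} to find a $1$-layer face of $D$, isolate the leftmost and rightmost $1$-layer faces, bound the pieces of $D$ that they cut off at the two ends, and then apply Lemma \ref{one layer diagram} to the part of $D$ between them. Concretely, I would fix a small $\varepsilon>0$ and work on the a.a.s.\ event that $G_\ell(m,d)$ is $\delta$-hyperbolic with $\delta\le 10\ell$, that the isoperimetric inequality (Theorem \ref{inequality for diagrams}) holds with this $\varepsilon$, and that the conclusions of Theorem \ref{parallel diagram in one forth}, Lemma \ref{connected on boundary} and Lemma \ref{one layer diagram} all hold (available since $d<1/6<1/4$). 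Given parallel $\gamma_1,\gamma_2$ of length $\ge L\ge 10000\ell$, I form a band diagram $D$ with corners $x_1,y_1,y_2,x_2$. As $|\gamma_i|\ge L\ge 10000\ell\ge\frac{1200}{1-4d}\ell$ (using $\frac{1200}{1-4d}<3600$), Theorem \ref{parallel diagram in one forth} guarantees that $D$ has at least one $1$-layer face; and each $\varphi(\overline{x_iy_i})$ is a sub-geodesic of $\gamma_i$ obtained by removing at most $10\delta$ at each end, a fact I will use for the length bookkeeping below.

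Next I would organize the $1$-layer faces. By Lemma \ref{connected on boundary} a $1$-layer face $f$ meets $\gamma_1$ and $\gamma_2$ in single arcs, and the sub-path $\lambda_f\subseteq\partial f$ joining these two arcs on the $\overline{x_1x_2}$ side is simple, of length $\le\ell$, and separates $D$ into a region $A_f$ containing $\overline{x_1x_2}$ and its complement. Since the $1$-layer faces of a band diagram are linearly ordered from left to right this way, I let $f_{\mathrm{left}},f_{\mathrm{right}}$ be the first and last, with cut-off regions $D_{\mathrm{left}}:=A_{f_{\mathrm{left}}}$ and $D_{\mathrm{right}}$. By extremality $D_{\mathrm{left}}$ contains no $1$-layer face, hence no face meeting both boundary geodesics; writing $\partial D_{\mathrm{left}}=\overline{x_1x_2}\cup\alpha_1\cup\lambda_{f_{\mathrm{left}}}\cup\alpha_2$ with $\alpha_i\subseteq\gamma_i$ geodesic, the estimate of Case 3 of Lemma \ref{one layer diagram} (Lemma \ref{connected on boundary} together with $|\partial f\cap\alpha_i|\le\tfrac12|\partial f|$ for each face, no face meeting both $\alpha_1$ and $\alpha_2$) gives $|\alpha_1|+|\alpha_2|\le\tfrac12|D_{\mathrm{left}}|\ell$; combined with $|\partial D_{\mathrm{left}}|\le 10\delta+\ell+|\alpha_1|+|\alpha_2|$ and the isoperimetric inequality, this forces $(\tfrac12-2d-\varepsilon)|D_{\mathrm{left}}|\ell\le 10\delta+\ell$, and since $\tfrac12-2d>\tfrac16$ it bounds $|D_{\mathrm{left}}|$, hence $|\alpha_1|,|\alpha_2|$, by an absolute constant comfortably below $3600\ell$. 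The symmetric argument bounds $D_{\mathrm{right}}$. (This also shows $f_{\mathrm{left}}\ne f_{\mathrm{right}}$, since otherwise $\gamma_1$ would be the concatenation of two such short arcs, a length-$\le\ell/2$ arc on $f_{\mathrm{left}}$, and two pieces of length $\le 10\delta$ outside $D$, contradicting $|\gamma_1|\ge 10000\ell$.)

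To conclude, I let $\gamma_i'$ be the sub-geodesic of $\gamma_i$ from the $f_{\mathrm{left}}$-endpoint of $\alpha_i$ to the symmetric $f_{\mathrm{right}}$-endpoint. Then $\gamma_i'$ is obtained from $\gamma_i$ by deleting at most $10\delta+|\alpha_i|\le 3600\ell$ on the left and a symmetric amount on the right, so $|\gamma_i'|\ge L-7200\ell$ and the endpoints of $\gamma_i'$ are $3600\ell$-close to those of $\gamma_i$; and $\gamma_1',\gamma_2'$ are parallel, their endpoints on each side being joined by $\lambda_{f_{\mathrm{left}}}$ resp.\ the right side of $f_{\mathrm{right}}$, of length $\le\ell\le 10\delta$, and the geodesics being disjoint. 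The sub-diagram $D'$ of $D$ lying between $\lambda_{f_{\mathrm{left}}}$ and the right side of $f_{\mathrm{right}}$ is a reduced disk diagram of band type between $\gamma_1'$ and $\gamma_2'$ consisting of $f_{\mathrm{left}}$, $f_{\mathrm{right}}$, and the faces between them, all of which are $1$-layer by Lemma \ref{one layer diagram}; so $D'$ is a $1$-layer band diagram. For the final ``in addition'' clause, a $1$-layer band diagram is a linear chain of $1$-layer faces glued along consecutive rungs, so given parallel sub-geodesics $\gamma_1'',\gamma_2''$ of $\gamma_1',\gamma_2'$, the faces of $D'$ lying over $\gamma_1''\cup\gamma_2''$ form a sub-chain of it, which is a $1$-layer band diagram between $\gamma_1''$ and $\gamma_2''$ after trimming its extreme rungs to the endpoints.

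The step I expect to be the main obstacle is the middle one: pinning down the linear order of $1$-layer faces of a band diagram, and above all extracting the clean uniform $O(\ell)$ bound on the outer regions $D_{\mathrm{left}},D_{\mathrm{right}}$ with the honest bookkeeping — the $\le 10\delta$ of slack from the band-diagram construction and the $\le\ell$ sides of $f_{\mathrm{left}},f_{\mathrm{right}}$ — that makes the trimmed amount genuinely at most $3600\ell$ per end. A minor attendant point is that $D_{\mathrm{left}}$, though band-shaped, is not literally a band diagram (its side $\lambda_{f_{\mathrm{left}}}$ is a side of a face, not a geodesic); one uses here only that the isoperimetric estimates invoked from Theorem \ref{parallel diagram in one forth} and Lemma \ref{one layer diagram} require that side to be short, not geodesic.
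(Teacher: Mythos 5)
Your proposal is correct and its overall architecture coincides with the paper's: produce a band diagram, invoke Theorem \ref{parallel diagram in one forth} to get 1-layer faces, locate the extreme ones, bound the two outer pieces they cut off, and apply Lemma \ref{one layer diagram} to everything in between. The one step where you genuinely diverge is the bound on the outer regions $D_{\mathrm{left}}, D_{\mathrm{right}}$. The paper handles this by re-applying Theorem \ref{parallel diagram in one forth} to the truncated piece: if the component of $D\setminus f_x$ on the $\gamma_1(0)$ side were longer than $\frac{1200}{1-4d}\ell\leq 3600\ell$, it would be a band diagram with no 1-layer face, a contradiction — this is where the constants $3600\ell$ and $7200\ell$ come from. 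You instead run the isoperimetric count of Case 3 of Lemma \ref{one layer diagram} directly on $D_{\mathrm{left}}$ ($|\alpha_1|+|\alpha_2|\leq\tfrac12|D_{\mathrm{left}}|\ell$ since no face of $D_{\mathrm{left}}$ is 1-layer, hence $(\tfrac12-2d-\varepsilon)|D_{\mathrm{left}}|\ell\leq 10\delta+\ell$), which yields a sharper bound of roughly $400\ell$ per end and so comfortably recovers the stated $3600\ell$ and $L-7200\ell$. Your route also sidesteps the small imprecision you flagged — the truncated piece is not literally a band diagram, since its inner side is a face-boundary arc of length $\leq\ell$ rather than a geodesic of length $\leq 10\delta$; the paper's re-invocation of Theorem \ref{parallel diagram in one forth} implicitly relies on the fact that its proof only needs that side to be short. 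Both arguments are sound; yours is marginally more self-contained at that step and gives better constants, while the paper's is shorter on the page because it recycles the $d<1/4$ theorem verbatim.
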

\begin{proof} Let $D$ be a band diagram between $\gamma_1,\gamma_2$ with associated map $\varphi : D\to\Cay(G,X)$. By Theorem \ref{parallel diagram in one forth}, there are 1-layer faces in $D$. 

Note that when $d<1/6$, we have $\frac{1200}{1-4d}\ell\leq 3600\ell$. Let $f_x$ be the 1-layer face of $D$ such that $\varphi(f_x)$ is closest to $\gamma_1(0)$. The face $\varphi(f_x)$ must be $3600\ell$-close to $\gamma_1(0)$. Otherwise, since $|\partial f_x|\leq \ell\leq 20\delta$, the connected component of $D\backslash f_x$ at the side of $\gamma_1(0)$ is a subdiagram of $D$ that can be identified, via a restriction of $\varphi$, as a band diagram between some sub-geodesics of $\gamma_1,\gamma_2$ of length at least $ \frac{1200}{1-4d}\ell$ that has no 1-layer faces, contradicting Theorem \ref{parallel diagram in one forth}. Let $f_y$ be the 1-layer face of $D$ such that $\varphi(f_y)$ is closest to $\gamma_1(t_1)$. By the same arguments $\varphi(f_y)$ is $3600\ell$-close to $\gamma_1(t_1)$. Since $L\geq 10000\ell$, the two faces $f_x$ and $f_y$ are different.

Let $D'$ be the subdiagram of $D$ consisting of $f_x,f_y$ and the faces between them. According to Lemma \ref{one layer diagram}, $D'$ is a one layer diagram. Now let $\gamma_1' = \varphi(D')\cap\gamma_1$ and $\gamma_2' = \varphi(D')\cap\gamma_2$. It is a pair of parallel geodesics of lengths at least $L-7200\ell$. We then identify $D'$, via a restriction of $\varphi$, as a one layer band diagram between $\gamma_1'$ and $\gamma_2'$.

\begin{figure}[h]
\begin{center}
\begin{tikzpicture}
\fill [gray!40] (-3.5,0) -- (3.5,0) -- (3.5,1) -- (-3.5,1);
\fill [gray!20] (-1.5,0) -- (-2.5,0) -- (-2.5,1) -- (-1.5,1);
\fill [gray!20] (1.5,0) -- (2.5,0) -- (2.5,1) -- (1.5,1);
\node at (0,0.5) {$\varphi(D')$};
\node at (-2,0.5) {\scriptsize{$\varphi(f_x)$}};
\node at (2,0.5) {\scriptsize{$\varphi(f_y)$}};

\draw [very thick] (-4,0) -- (4,0);
\draw [very thick] (-4,1) -- (4,1);
\node at (4.3,0) {$\gamma_2$};
\node at (4.3,1) {$\gamma_1$};

\draw [thick] (-3.5,0) -- (-3.5,1);
\draw [thick] (3.5,0) -- (3.5,1);
\draw (2.5,0) -- (2.5,1);
\draw (1.5,0) -- (1.5,1);
\draw (-2.5,0) -- (-2.5,1);
\draw (-1.5,0) -- (-1.5,1);

\end{tikzpicture}
\end{center}
    \caption{The 1-layer faces $f_x$ and $f_y$ in the diagram $D$.}
\end{figure}
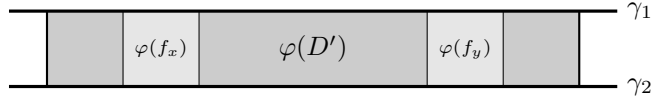

\end{proof}

\section{Number of parallel geodesics}

\begin{thm}\label{number of parallel geodesics} Let $k\geq 1$ be an integer. Let $G_\ell(m,d)$ be a random group at density $d<1/6$. There exists $C=C(d,k  )$ such that, if $\gamma_1,\dots,\gamma_k$ is a set of $k$ parallel geodesics of lengths at least $C\ell$, then a.a.s.
\[k\leq 2+\frac{4d}{1-6d}.\]
In particular, if $d<1/10$, then a.a.s. $k\leq 2$.
\end{thm}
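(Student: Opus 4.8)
The plan is to take $k$ pairwise parallel geodesics $\gamma_1,\dots,\gamma_k$ of length at least $C\ell$, restrict them to a long common sub-segment, arrange them into a stack of $k-1$ adjacent band diagrams, and count faces layer by layer. More precisely, first I would apply Theorem \ref{parallel diagram in one sixth} successively to each consecutive pair $\gamma_i,\gamma_{i+1}$: each application costs at most $7200\ell$ of length and moves the endpoints by at most $3600\ell$, so after $k-1$ applications we obtain sub-geodesics $\gamma_1',\dots,\gamma_k'$ (choosing $C = C(d,k)$ large enough, say $C \geq 10000 + 7200k$, to keep all intermediate lengths above the threshold $10000\ell$) together with a $1$-layer band diagram $D_i$ between $\gamma_i'$ and $\gamma_{i+1}'$ for each $i = 1,\dots,k-1$. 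Here I must be slightly careful that restricting the $1$-layer structure of $D_i$ to a shorter pair of sub-geodesics still gives a $1$-layer band diagram, which is exactly the last clause of Theorem \ref{parallel diagram in one sixth}.

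Next I would glue the diagrams $D_1,\dots,D_{k-1}$ along their shared boundary geodesics $\gamma_2',\dots,\gamma_{k-1}'$ to form one disk diagram $D$ between $\gamma_1'$ and $\gamma_k'$, and note that $D$ may fail to be reduced only at the gluing seams; passing to a reduced diagram (or using the $2$-complex isoperimetric inequality \eqref{useful reduced inequality} directly, which tolerates some cancellation) keeps the estimates intact up to the $\varepsilon$-terms. The key geometric observation is that in each $1$-layer diagram $D_i$, every face touches both $\gamma_i'$ and $\gamma_{i+1}'$, and by Lemma \ref{connected on boundary} such a face meets each of these geodesics in a single maximal arc of length at most $\ell/2$; hence, writing $L'$ for the common length of the restricted geodesics (at least $C\ell - (k-1)(3600\ell)$, still $\geq$ a large multiple of $\ell$), we get $|D_i| \geq 2L'/\ell - O(1)$, so $|D| = \sum_{i=1}^{k-1}|D_i| \geq (k-1)(2L'/\ell) - O(k)$.

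On the other hand, the boundary of $D$ has length $|\partial D| \leq 2L' + O(\delta) = 2L' + O(\ell)$, since $\partial D$ consists of $\gamma_1'$, $\gamma_k'$, and two side paths of length $O(\delta)$. Applying the isoperimetric inequality $|\partial D| \geq (1 - 2d - \varepsilon)|D|\ell$ (Theorem \ref{inequality for diagrams}, or its reduced-diagram form after cleaning up the seams) gives
\[
2L' + O(\ell) \;\geq\; (1 - 2d - \varepsilon)\bigl((k-1)\,2L'/\ell - O(k)\bigr)\ell.
\]
Dividing by $L'$ and letting $L'/\ell \to \infty$ (which is forced by taking $C$ large, hence $C = C(d,k)$) kills the additive $O(\ell)$ and $O(k\ell)$ terms, leaving $2 \geq (1 - 2d - \varepsilon)\,2(k-1)$, i.e. $k - 1 \leq \tfrac{1}{1 - 2d - \varepsilon}$. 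Then I would optimize the constants: a more careful bookkeeping — rather than lumping every $D_i$ into the same crude count — replaces the factor $1-2d$ by an effective density that accounts for the shared boundary geodesics being counted once rather than twice, which is what upgrades the bound to the stated $k \leq 2 + \tfrac{4d}{1-6d}$. Concretely, the shared geodesics $\gamma_2',\dots,\gamma_{k-1}'$ contribute to the perimeter of two adjacent sub-diagrams but only once to $|\partial D|$; tracking this, the inequality becomes $2L' \gtrsim (1-2d-\varepsilon)\bigl(2(k-1)L' - 2(k-2)L'/\text{(something)}\bigr)$, and solving for $k$ yields the claimed formula. The main obstacle I anticipate is precisely this last optimization: getting the sharp constant $\tfrac{4d}{1-6d}$ requires counting the faces adjacent to each interior geodesic $\gamma_i'$ with the right multiplicity (a face in $D_{i-1}$ and a face in $D_i$ both abut $\gamma_i'$, and $\gamma_i'$ being a geodesic forces a lower bound on the combined arc count there), rather than the loose $|D_i| \geq 2L'/\ell$ bound used above; this is the same style of argument as the Remark following Lemma \ref{one layer diagram}, pushed through the whole stack. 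The final clause ($d < 1/10 \Rightarrow k \leq 2$) is then immediate since $\tfrac{4d}{1-6d} < 1$ exactly when $d < 1/10$.
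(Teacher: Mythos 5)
There is a genuine gap, and it sits exactly where you wave your hands: the non-reducedness of the glued complex. Your stacked diagram $D$ (or $Y$) is not a reduced van Kampen diagram, because faces of $D_{i-1}$ and $D_i$ meeting along the shared geodesic $\gamma_i'$ can form reducible pairs, so Theorem \ref{inequality for diagrams} does not apply to it; and ``passing to a reduced diagram'' destroys the face count you need (reduction deletes faces, possibly almost all of them), while using Inequality \eqref{useful reduced inequality} requires an upper bound on $\Red(Y)$, which a priori can be as large as $\Cancel(Y)$ and render that inequality vacuous. Controlling $\Red(Y)$ is the entire technical content of the paper's proof (the ``close to / far from the other side'' dichotomy of Lemmas \ref{lem estimation far from the other side}, \ref{lem reducible can not be close} and \ref{inequality reduction degree}, which bound $\Red(Y)$ by $4\sum\Cancel(D_{ij})$ plus lower-order terms), and your proposal contains no substitute for it. A symptom that something is unsound: taken at face value, your first-pass computation $2L'+O(\ell)\geq(1-2d-\varepsilon)\cdot 2(k-1)L'$ yields $k\leq 1+\frac{1}{1-2d}<2.5$, i.e.\ $k\leq 2$ for \emph{all} $d<1/6$ --- strictly stronger than the theorem, whose bound blows up as $d\to 1/6$. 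The subsequent ``optimization'' paragraph that is supposed to recover the correct constant $\frac{4d}{1-6d}$ is not an argument; the formula is asserted, not derived.

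There are two further structural problems. First, gluing only the $k-1$ consecutive band diagrams presupposes a linear order on the parallel geodesics in which consecutive ones bound adjacent planar strips; nothing justifies this, and the paper avoids it entirely by gluing all $\binom{k}{2}$ pairwise diagrams $D_{ij}$ into a (non-planar) $2$-complex $Y$ and never invoking planarity of the union. Second, even granting the ordering and importing the paper's reduction-degree machinery, the consecutive-only gluing cannot give the stated bound: the decisive term in the paper is the cancellation $k(k-2)(C-8000)\ell$ of Lemma \ref{cancel of Y}, quadratic in $k$ because each geodesic $\gamma_j'$ carries $k-1$ attached diagrams and hence multiplicity $k-2$ per edge; with only consecutive diagrams each interior geodesic carries just two, the cancellation is only linear in $k$, and the resulting inequality is strictly weaker (it degenerates before $d=1/6$). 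So the all-pairs gluing is not an optional refinement but the mechanism that produces $k\leq 2+\frac{4d}{1-6d}$. Your closing observation that $\frac{4d}{1-6d}<1$ iff $d<1/10$ is correct, but everything upstream of it needs to be replaced.
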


Recall, from Theorem \ref{inequality for diagrams}, that a.a.s. $G_\ell(m,d)$ is $\delta$-hyperbolic with $\delta = \frac{4\ell}{1-2d}$. At $d<1/4$, we assume simply $\delta\leq 10\ell$. We shall construction a 2-complex diagram based on the $k$ parallel geodesics $\gamma_1,\dots,\gamma_k$ of $G_\ell(m,d)$, similar to \cite{GM18}. Every statement in this section is considered under the assumption that it holds asymptotically almost surely (a.a.s.) as $\ell\to\infty$.\\

Suppose that $C\geq 10000$. By Theorem \ref{parallel diagram in one sixth}, there are parallel sub-geodesics $\gamma_1',\dots,\gamma_k'$ of lengths at least $C\ell-7200\ell$ such that, for any $i<j$ there is a one layer band diagram $D_{ij}$ between $\gamma_i'$ and $\gamma_j'$. Denote by $\varphi_{ij}: D_{ij}\to \Cay(X,R_\ell)$ the associated map of $D_{ij}$. If $i>j$, then $D_{ij}$ refers to the diagram $D_{ji}$. Recall that
\[|\partial D_{ij}| \leq 2C\ell+200\ell.\]

By the construction of a band diagram (Definition-Lemma \ref{band diagram}), the restriction of $\varphi_{ij}$ to the boundary $\partial D_{ij}$ is injective. Let $Y$ be the diagram obtained by attaching the boundaries of the $\binom{k}{2}$ reduced disk diagrams $(D_{ij})_{1\leq i<j\leq k}$ along the $k$ geodesics via their associated maps. By the isoperimetric inequality (Theorem \ref{inequality for diagrams}), 
\[|D_{ij}|\leq \frac{|\partial D_{ij}|}{(1-2d-\varepsilon)\ell} \leq \frac{2C+200}{1-2d-\varepsilon}.\] Thus,
\begin{equation}\label{inequality number of faces}
    |Y| = \sum_{1\leq i<j\leq k}|D_{ij}|\leq \binom{k}{2}\frac{2C+200}{1-2d-\varepsilon}.
\end{equation}

With a simple estimation of the number of maximal arcs in $Y$, the complexity of $Y$ is at most \[6\binom{k}{2}\frac{2C+200}{1-2d-\varepsilon}.\]

Hence, with Theorem \ref{inequality for 2-complex}, we can apply Inequality (\ref{useful reduced inequality}):
\begin{equation*}\tag{\ref{useful reduced inequality}}
    \Cancel(Y)-\Red(Y)\leq (d+\varepsilon)|Y|\ell.
\end{equation*}

\quad

\begin{lem}\label{cancel of Y}
    \[\Cancel(Y)\geq \sum_{1\leq i<j\leq k}\Cancel(D_{ij}) + k(k-2)(C-8000)\ell.\]
\end{lem}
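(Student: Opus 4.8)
The plan is to compare cancellation in the glued complex $Y$ with the cancellation already present in each band diagram $D_{ij}$, and to show that the gluing operation itself creates a controlled amount of extra cancellation along the $k$ geodesics. The edges of $Y$ fall into two types: edges coming from the interior of some $D_{ij}$ (these carry exactly the faces they carried in $D_{ij}$, so they contribute $\sum_{i<j}\Cancel(D_{ij})$ in total after summing), and edges lying on one of the geodesics $\gamma_1',\dots,\gamma_k'$, which may have received faces from several different diagrams $D_{ij}$ simultaneously. The inequality to prove is therefore really a statement about the latter: the gluing along the geodesics forces at least $k(k-2)(C-8000)\ell$ units of extra cancellation.

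First I would fix a geodesic, say $\gamma_i'$, and count how many faces of $Y$ are glued along its edges. Since each $D_{ij}$ (for $j\neq i$) is a \emph{1-layer} band diagram, every one of its faces touches $\gamma_i'$; more precisely, the portion of $\gamma_i'$ that lies in $\varphi_{ij}(D_{ij})$ is covered by the 1-layer faces of $D_{ij}$, and by Lemma \ref{connected on boundary} each such face meets $\gamma_i'$ in a single maximal arc of length at most $\ell/2$. Hence the number of faces of $D_{ij}$ touching any given sub-segment of $\gamma_i'$ of length $t$ is at least $2t/\ell$ (roughly), minus a bounded error for the two end faces. Taking $t$ to be the common overlap length — all the $\gamma_j'$ are pairwise parallel sub-geodesics of length at least $(C-7200)\ell$ whose endpoints are within $3600\ell$, and one checks the mutual overlaps along $\gamma_i'$ all contain a central sub-segment of length at least $(C-8000)\ell$ — each of the $k-1$ diagrams $D_{ij}$ contributes at least about $2(C-8000)$ faces stacked along that central segment of $\gamma_i'$.

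Now I would run the cancellation count: along the edges of that central segment of $\gamma_i'$, the total number of (face, edge) incidences coming from the $k-1$ diagrams $D_{ij}$, $j\neq i$, is at least $(k-1)\cdot(C-8000)\ell$ edge-incidences (summing the $2t/\ell$ arc-lengths back up to edge-counts along a segment of $\ell$-fraction length $(C-8000)$), while the number of \emph{edges} on that segment is only $(C-8000)\ell$. Since $\Cancel$ counts $\sum_e(\deg(e)-1)^+$, each such edge already carried faces from $D_{ij}$ for at least — after being careful — $k-1$ distinct values of $j$, and an edge of $\gamma_i'$ is shared by $Y$-faces coming from each of those diagrams plus possibly interior incidences; the excess degree at each such edge is at least $k-2$. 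Summing over the $(C-8000)\ell$ edges of the central segment of $\gamma_i'$ gives at least $(k-2)(C-8000)\ell$ extra cancellation attributable to $\gamma_i'$; summing over the $k$ geodesics and adding back the interior contribution $\sum_{i<j}\Cancel(D_{ij})$ (which involves disjoint edge sets from the geodesic edges) yields the claimed bound. The constant $8000$ absorbs the $7200\ell$ length loss from Theorem \ref{parallel diagram in one sixth}, the $3600\ell$ endpoint displacements, the $10\delta\le 100\ell$ parallelism slack, and the bounded end-face errors.

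The main obstacle I anticipate is bookkeeping the geodesic edges honestly: an edge of $\gamma_i'$ in $Y$ is a single edge, but it is the image under $k-1$ different maps $\varphi_{ij}$ of one boundary edge of each $D_{ij}$, so I must verify that these really are identified to one edge of $Y$ (they are, by construction of $Y$ as the quotient gluing along the geodesics) and that the degree of that edge in $Y$ is at least $k-2$ \emph{above} the interior degrees already counted in $\sum\Cancel(D_{ij})$ — i.e. that I am not double-counting. The cleanest way is to partition $Y^{(1)}$ into interior edges of the $D_{ij}$'s and geodesic edges, write $\Cancel(Y)=\sum_{e\text{ interior}}(\deg_Y(e)-1)^+ + \sum_{e\text{ geodesic}}(\deg_Y(e)-1)^+$, observe that the first sum equals $\sum_{i<j}\Cancel(D_{ij})$ since interior edges keep their $D_{ij}$-degree, and lower-bound the second sum by the arc-counting argument above. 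I expect no genuine difficulty beyond making the overlap-length estimate $(C-8000)\ell$ precise from the data in Theorem \ref{parallel diagram in one sixth}.
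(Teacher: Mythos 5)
Your proposal is correct and follows essentially the same route as the paper: split $\Cancel(Y)$ into the interior-edge contribution (which recovers $\sum_{i<j}\Cancel(D_{ij})$) and the geodesic-edge contribution, then observe that each edge in the common overlap segment of $\gamma_j'$ (of length at least $(C-8000)\ell$ after absorbing the endpoint slack) carries one face from each of the $k-1$ diagrams $D_{ij}$ and hence contributes at least $k-2$ to the cancellation. The face-counting detour in your middle paragraph is unnecessary — the direct edge-degree count in your final paragraph is exactly the paper's argument.
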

\begin{proof} 
    The cancellation of $Y$ is the sum of the cancellations of the $\binom{k}{2}$ sub-diagrams $(D_{ij})_{1\leq i\leq j}$ plus the cancellations on the $k$ geodesics $(\gamma_i')_{1 \leq i \leq k}$.

    Recall that the length of $\gamma_j'$ is at least $C\ell-7200\ell$. For any $i$, the corner of $D_{ij}$ on $\gamma_j'$ at the side of $\gamma_j'(0)$ is $100\ell$-close to $\gamma_j'(0)$. Thus, for any distinct $i,i',j$, there are at least $C\ell-8000\ell$ cancellations when attaching $D_{ij}$ and $D_{i'j}$. Consequently, there are at least $(k-2)(C-8000)\ell$ cancellations on the geodesic $\gamma_j$. Hence the result.
\end{proof}

\quad\\

Now we need an estimation on the reduction degree $\Red(Y)$. Since every $D_{ij}$ is reduced, the reductions in $Y$ may only happen on the $k$ geodesics.

For any face $f$ of $D_{ij}$, denote by $\Int(f)$ the set of \textit{intermediate edges} of $f$, which are the edges of $\partial f$ that do not intersect the two boundary geodesics. Note that $|\partial f| = |\partial f\cap \gamma_i| + |\partial f\cap \gamma_j| + |\Int(f)|$.

\begin{defi}
    An edge $e$ on $\partial D_{ij}\cap \gamma_i$ is said to be "close to the other side" in $D_{ij}$ if 
    \begin{enumerate}
        \item The face $f$ of $D_{ij}$ containing $e$ intersects $\gamma_j$, and
        \item $|\Int(f)|\leq |\partial f\cap \gamma_j|$.
    \end{enumerate}
    Otherwise, $e$ is said to be "far from the other side".
\end{defi}

\begin{lem} \label{lem estimation far from the other side}
    Let $E_{ij}$ be the set of edges on $\partial D_{ij}$ that are far from the other side in $D_{ij}$. We have
    \[|E_{ij}|\leq 4\Cancel(D_{ij}) + 400\ell.\]
\end{lem}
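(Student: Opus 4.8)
The plan is to estimate $|E_{ij}|$ face by face. Since $D_{ij}$ is a $1$-layer band diagram, every face $f$ of $D_{ij}$ meets both boundary geodesics; hence an edge of $\partial D_{ij}\cap\gamma_i$ lying in a face $f$ is far from the other side precisely when $|\Int(f)|>|\partial f\cap\gamma_j|$, and symmetrically for edges on $\gamma_j$. Put $a_f=|\partial f\cap\gamma_i|$, $b_f=|\partial f\cap\gamma_j|$ and $c_f=|\Int(f)|$, so $a_f+b_f+c_f=|\partial f|$. Because $D_{ij}$ is a disk diagram, each edge of $\partial D_{ij}$ lies in the boundary of a unique face, so $|E_{ij}|$ is the sum over all faces $f$ of the quantity $a_f+b_f$ when $c_f>\max(a_f,b_f)$, of $a_f$ when $b_f<c_f\le a_f$, of $b_f$ when $a_f<c_f\le b_f$, and of $0$ otherwise.

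The first ingredient is a triangle inequality for faces: $a_f\le b_f+c_f$ and $b_f\le a_f+c_f$. Indeed, as $d<1/6<1/4$ and $D_{ij}$ is reduced, Lemma \ref{connected on boundary} gives that $\partial f\cap\gamma_i$ is a single sub-arc of the loop $\partial f$; its complementary sub-arc has length $b_f+c_f$ and joins the two endpoints of $\partial f\cap\gamma_i$, while $\varphi_{ij}$ maps $\partial f\cap\gamma_i$ to a sub-geodesic of $\gamma_i$, so those endpoints lie at distance $a_f$. The second ingredient is that each face contributes at most $2c_f$ to $|E_{ij}|$: when the contribution is $a_f+b_f$ one has $c_f>a_f$ and $c_f>b_f$, so $a_f+b_f<2c_f$; when the contribution is $a_f$ one has $b_f<c_f$, hence $a_f\le b_f+c_f<2c_f$ by the triangle inequality; the $b_f$ case is symmetric. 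Summing over faces, $|E_{ij}|\le 2\sum_{f\in D_{ij}}|\Int(f)|$.

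Finally I would bound $\sum_{f}|\Int(f)|$ by $2\Cancel(D_{ij})+200\ell$. The edges lying in $\Int(f)$ for some face $f$ are exactly the edges of $D_{ij}$ that are off $\gamma_i\cup\gamma_j$: the interior edges, each of which lies in exactly two face boundaries, and the edges of the two side paths $\overline{x_1x_2}$, $\overline{y_1y_2}$ of the band diagram, each lying in exactly one. In the reduced disk diagram $D_{ij}$ the number of interior edges equals $\Cancel(D_{ij})$, and each side path is a sub-geodesic of length at most $10\delta\le100\ell$, so $\sum_{f}|\Int(f)|\le 2\Cancel(D_{ij})+200\ell$. Combining the three steps gives $|E_{ij}|\le 2\bigl(2\Cancel(D_{ij})+200\ell\bigr)=4\Cancel(D_{ij})+400\ell$. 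I expect the main obstacle to be the bookkeeping of this last step — making sure that no edge of $\gamma_i$ or $\gamma_j$ is ever an intermediate edge, that interior edges are counted with multiplicity exactly two and side edges with multiplicity one, and that the number of interior edges of a reduced disk diagram is indeed $\Cancel(D_{ij})$. The triangle-inequality step is short but essential: it is exactly what forbids a face with a long $\gamma_i$-arc and a short $\Int(f)$, and it cannot be dispensed with since it uses the connectedness of $\partial f\cap\gamma_i$ from Lemma \ref{connected on boundary}.
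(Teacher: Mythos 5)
Your proof is correct and follows essentially the same route as the paper's: bound each face's contribution to $E_{ij}$ by $2|\Int(f)|$ using the geodesic inequality $|\partial f\cap\gamma_i|\le|\partial f\cap\gamma_j|+|\Int(f)|$ (which the paper also derives from the connectedness of $\partial f\cap\gamma_i$), then sum $\sum_f|\Int(f)|=2\Cancel(D_{ij})+|\partial D_{ij}\setminus(\gamma_i\cup\gamma_j)|\le 2\Cancel(D_{ij})+200\ell$. The only cosmetic difference is your case split by comparing $a_f,b_f,c_f$ versus the paper's split on whether $\partial f\cap\gamma_j$ is empty or close/far, and your (harmless, unneeded) invocation of the $1$-layer property.
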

\begin{proof}
    Let $f$ be a face of $D_{ij}$. If $f$ does not intersect the two boundary geodesics, or it intersects but only on edges that are close to the other side, it does not contribute to the number $|E_{ij}|$.
    
    Suppose that $f$ intersects $\gamma_i$ and the edges of $\partial f \cap \gamma_i$ are far from the other side. There are three cases.
    
    \begin{enumerate}[{Case} 1.]
        \item $\partial f\cap \gamma_j$ has no edges.
        
        Since $\gamma_i$ is a geodesic path, $|\partial f \cap \gamma_i| \leq |\Int(f)|$. So $f$ contributes at most $|\Int(f)|$ edges to $|E_{ij}|$.

        \item $\partial f\cap \gamma_j$ contains edges, but these edges are close to the other side.

        By definition, $|\partial f \cap \gamma_j|<|\Int (f)|$. Since $\gamma_i$ is a geodesic path, $|\partial f \cap \gamma_i| \leq |\Int(f)| + |\partial f \cap \gamma_j| < 2|\Int(f)|$. So $f$ contributes at most $2|\Int(f)|$ edges to $|E_{ij}|$.
        
        \item $\partial f\cap \gamma_j$ contains edges, and these edges are far from the other side.

        $|\partial f \cap \gamma_j|<|\Int f|$ and $|\partial f \cap \gamma_i|<|\Int f|$. So $f$ contributes at most $2|\Int(f)|$ edges to $|E_{ij}|$.
    \end{enumerate}
    By summing the contributions of all faces $f\in D_{ij}$, we get \[|E_{ij}|\leq \sum_{f\in D_{ij}}2|\Int(f)|.\]
    
    Recall that the number of edges on $\partial D_{ij}$ that are not on the two boundary geodesics is at most $200\ell$. Therefore, we have
    \[\sum_{f\in D_{ij}}|\Int(f)| = 2\Cancel(D_{ij})+|\partial D_{ij}\setminus (\gamma_i\cup\gamma_j)| \leq 2\Cancel(D_{ij})+200\ell .\]
    Hence the result.

\end{proof}

\begin{lem}\label{lem reducible can not be close}
    Let $(f, f')$ be a reducible pair of faces of $Y$, and let $e$ be a common edge of $f, f'$ that contributes to the reduction degree $\Red(Y)$. Then $e$ cannot be simultaneously close to the other side in the two diagrams containing $f$ and $f'$ respectively. 
\end{lem}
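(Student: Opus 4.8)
The idea is to transport the configuration into the Cayley complex, where reducibility forces $f$ and $f'$ to carry a single boundary loop, and then to play the two ``close to the other side'' hypotheses against the disjointness of the two geodesics that $f$ and $f'$ reach across to.

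First I would fix notation. Since each $D_{ij}$ is reduced, no reduction of $Y$ occurs inside a single $D_{ij}$, so $e$ lies on one of the $k$ geodesics, say $\gamma_a$. Each of $f$ and $f'$ belongs to a unique summand of $Y$; write them $D_{ij}$ and $D_{i'j'}$. They are distinct, since a reduced diagram contains no reducible pair of distinct faces; and $e$, lying on $\gamma_a$, lies on a boundary geodesic of each, so $a\in\{i,j\}\cap\{i',j'\}$. Hence I may write $f\in D_{ab}$ and $f'\in D_{ac}$ with $b,c\neq a$ and $b\neq c$. Suppose, for a contradiction, that $e$ is close to the other side in both $D_{ab}$ and $D_{ac}$.

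Both $\partial f$ and $\partial f'$ carry the same relator $r$ and meet along $e$ at the same position and with the same orientation, so $\varphi_{ab}(\partial f)=\varphi_{ac}(\partial f')$; denote this loop by $\lambda$ and set $n:=|r|$. View $\partial f$ as the cyclic word $r$. Since $D_{ab}$ is a one-layer band diagram, $f$ meets both $\gamma_a$ and $\gamma_b$, and by Lemma \ref{connected on boundary} each intersection is an arc; thus $r$ splits into four consecutive blocks --- a $\gamma_a$-block $I$ of length $|\partial f\cap\gamma_a|$ containing the position of $e$, an intermediate block, a $\gamma_b$-block $J$ of length $|\partial f\cap\gamma_b|>0$, an intermediate block. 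Likewise $D_{ac}$ splits the same cyclic word $r$ as $I'$ (the $\gamma_a$-block of $f'$, also containing the position of $e$), an intermediate block, $J'$ (the $\gamma_c$-block of $f'$), an intermediate block. Because $\gamma_a,\gamma_b,\gamma_c$ are pairwise disjoint, no edge of $\lambda$ lies on two of them; therefore, as position sets in $r$, the blocks $I,J,J'$ are pairwise disjoint and $I',J,J'$ are pairwise disjoint, while $I$ and $I'$ overlap (both contain the position of $e$). I would then run the count: from $|\Int(f)|\leq|\partial f\cap\gamma_b|$ and $|\partial f|=|\partial f\cap\gamma_a|+|\partial f\cap\gamma_b|+|\Int(f)|$ one gets $|J|\geq(n-|I|)/2$, and symmetrically $|J'|\geq(n-|I'|)/2$. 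As $I\cup I'$, $J$, $J'$ are pairwise disjoint, $|I\cup I'|+|J|+|J'|\leq n$; substituting the two bounds forces $|I\cup I'|\leq(|I|+|I'|)/2$, and since $|I\cup I'|\geq\max(|I|,|I'|)$ this yields $|I|=|I'|$ and then $I=I'$. Feeding this back, $|I|+|J|+|J'|\leq n$ and $|J|+|J'|\geq n-|I|$ give equality, so $I,J,J'$ partition $r$ into three consecutive blocks, nonempty because $|I|<n$ (the block $J$ is nonempty). Hence $J$ and $J'$ are adjacent; the vertex of $\lambda$ at their junction is, through $D_{ab}$, an endpoint of the $\gamma_b$-arc of $f$ and so lies on $\gamma_b$, and through $D_{ac}$ an endpoint of the $\gamma_c$-arc of $f'$ and so lies on $\gamma_c$. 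Thus $\gamma_b\cap\gamma_c\neq\varnothing$, contradicting that the parallel geodesics $\gamma_b,\gamma_c$ do not intersect.

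The main obstacle I anticipate is the middle step: verifying cleanly that the two band diagrams present the common relator $r$ as blocks lying on \emph{three} pairwise disjoint geodesics, and that the two $\gamma_a$-blocks $I$ and $I'$ genuinely overlap at the position of $e$ --- so that the estimate $|I\cup I'|+|J|+|J'|\leq n$ uses the union $|I\cup I'|$ and not $|I|+|I'|$. Once this block picture is secured the arithmetic is rigid and the geometric conclusion $\gamma_b\cap\gamma_c\neq\varnothing$ is immediate.
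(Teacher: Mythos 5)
Your proof is correct and follows essentially the same route as the paper: both hinge on the observation that, since $\gamma_a,\gamma_b,\gamma_c$ are pairwise disjoint, the $\gamma_b$-block of $f$ must sit inside $\Int(f')$ and the $\gamma_c$-block of $f'$ inside $\Int(f)$, which clashes with the two closeness inequalities. The paper finishes with a strict four-term inequality chain ($|\Int(f)|\leq|\partial f\cap\gamma_b|<|\Int(f')|\leq|\partial f'\cap\gamma_c|<|\Int(f)|$), whereas you carry out the equality-case analysis via the block partition of $r$ and a forced common vertex of $\gamma_b$ and $\gamma_c$ --- a slightly longer but equally valid, and arguably more careful, endgame.
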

\begin{proof}
    Suppose that $e\in \gamma_1$, $f\in D_{12}$, $f'\in D_{13}$ and $e$ is close to the other side both in $D_{12}$ and in $D_{13}$. Since $f$ and $f'$ form a reducible pair, their images under the associated map $Y\to\Cay(X,R_\ell)$ must coincide. Because the three geodesics $\gamma_1, \gamma_2$ and $\gamma_3$ do not intersect, the image of $f\cap \gamma_2$ does not intersect the images of $f'\cap \gamma_1$ and $f'\cap\gamma_3$, which gives $|f\cap \gamma_2|<|\Int (f')|$. For the same reason, $|f'\cap \gamma_3|<|\Int (f)|$.

    However, since $e$ is close to the other side in both diagrams, by definition, $|\Int(f)| \leq |\partial f \cap \gamma_2|$ and $|\Int(f')| \leq |\partial f' \cap \gamma_3|$. This results in a contradiction when comparing $|\Int (f)|$ and $|\Int (f')|$.
\end{proof}

\begin{lem}\label{inequality reduction degree}
    \[\Red(Y)\leq 4\sum_{1\leq i<j\leq k}\Cancel(D_{ij})+\binom{k}{2}400\ell.\]
\end{lem}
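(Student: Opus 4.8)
The plan is to localize all reductions of $Y$ onto the $k$ geodesics and then to bound, one boundary edge at a time, how much a reducible pair can contribute, charging each unit of reduction to an edge that is ``far from the other side.'' First I would note that each $D_{ij}$ is a reduced disk diagram, so no reducible pair of $Y$ has both of its faces inside a single $D_{ij}$; and since the $D_{ij}$ are glued together only along the geodesics $\gamma_1',\dots,\gamma_k'$, the common edge of any reducible pair contributing to $\Red(Y)$ is an edge $e$ lying on some $\gamma_i'$, with the two faces coming from two distinct diagrams $D_{ij}$, $D_{ij'}$ that both carry $\gamma_i'$ on their boundary. Hence $\Red(Y)$ is a sum over the edges of the geodesics of a local contribution.

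Fix an edge $e$ on $\gamma_i'$. Each diagram $D_{ij}$ ($j\neq i$) contributes at most one face of $Y$ incident to $e$, namely the unique face of $D_{ij}$ on that boundary edge, so there are at most $k-1$ incident faces. Grouping the incident faces by their relator and the position of $e$ in their boundary loop, the local contribution of $e$ to $\Red(Y)$ (by Definition \ref{reduction degree}) equals $n_e-m_e$, where $n_e$ is the number of incident faces and $m_e$ the number of groups. Call an incident face \emph{far at $e$} if $e$ is far from the other side in the diagram containing it, and let $b_e$ be the number of such faces. By Lemma \ref{lem reducible can not be close}, two incident faces that are both \emph{not} far at $e$ cannot form a reducible pair, hence lie in different groups; so the $n_e-b_e$ non-far incident faces inject into the set of groups, giving $m_e\geq n_e-b_e$ and therefore a local contribution at $e$ of at most $b_e$.

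Summing over all edges $e$ of $\gamma_1',\dots,\gamma_k'$ yields $\Red(Y)\leq\sum_e b_e$. An incident face of $e\in\gamma_i'$ that is far at $e$ is, by definition, witnessed by $e$ belonging to $E_{ij}$ for the corresponding $j$, and conversely; reorganizing the sum over the $\binom{k}{2}$ diagrams therefore gives $\sum_e b_e=\sum_{1\leq i<j\leq k}|E_{ij}|$. Finally I would substitute the estimate $|E_{ij}|\leq 4\Cancel(D_{ij})+400\ell$ of Lemma \ref{lem estimation far from the other side} and sum over the pairs, obtaining $\Red(Y)\leq 4\sum_{1\leq i<j\leq k}\Cancel(D_{ij})+\binom{k}{2}400\ell$.

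I expect the only genuine difficulty to be the bookkeeping in the middle step: aligning the combinatorial notion of a reducible pair (same relator, same boundary position, same orientation) with the grouping used to evaluate $\Red(Y)$, handling degenerate cases (a single face meeting $e$ more than once, or a $D_{ij}$ consisting of a single face), and verifying that the passage from $\sum_e b_e$ to $\sum_{i<j}|E_{ij}|$ is an exact count with no double-counting. Everything else is a mechanical combination of Lemmas \ref{lem reducible can not be close} and \ref{lem estimation far from the other side} with the definition of the reduction degree.
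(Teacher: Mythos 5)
Your proposal is correct and follows essentially the same route as the paper: localize all reductions to the geodesics (since each $D_{ij}$ is reduced), use Lemma \ref{lem reducible can not be close} to show that each reducibility class at a given edge contains at most one face that is close to the other side, thereby charging each unit of $\Red(Y)$ to an edge of some $E_{ij}$, and conclude with Lemma \ref{lem estimation far from the other side}. Your edge-by-edge grouping by relator and boundary position is just a slightly more explicit bookkeeping of the paper's count.
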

\begin{proof}
    Recall that the reduction degree of $Y$ is
    \[\Red(Y) = \sum_{e\in Y^{(1)}}\sum_{r\in R}\sum_{1\leq j\leq \ell}\Big(\big|\{f\in Y \,|\, \varphi_2(f) = r, e \textup{ is the $j$-th edge of } \partial f\}\big| - 1 \Big)^+.\]

    Let $e$ be an edge of $Y$, and let $f_1,\dots,f_t$ be a set of $t\geq 2$ faces such that each pair $f_i,f_j$ is reducible, with the reduction happening at $e$. Since every sub-diagram $D_{ij}$, $1\leq i<j\leq k$, is reduced, $e$ is on one of the geodesics, and each face $f_s$, $1\leq s\leq t$, lies on a different sub-diagram.
    
    By Lemma \ref{lem reducible can not be close}, if $e$ is close to the other side in the sub-diagram containing $f_1$, then it must be far from the other side in the other $t-1$ sub-diagrams containing respectively $f_2,\dots,f_t$. This means that every time we add $t-1$ to the reduction degree $\Red(Y)$, we also add at least $t-1$ to the sum $\displaystyle{\sum_{1\leq i<j \leq k}|E_{ij}|}$. Thus, we have
    \[\Red(Y)\leq \sum_{1\leq i<j \leq k}|E_{ij}|.\]
    The result follows from Lemma \ref{lem estimation far from the other side}.
\end{proof}\quad\\

\begin{proof}[\textbf{Proof of Theorem \ref{number of parallel geodesics}}]\quad

Combining Lemma \ref{cancel of Y}, Lemma \ref{inequality reduction degree} and Inequality (\ref{inequality number of faces}), Inequality (\ref{useful reduced inequality}) becomes
\[k(k-2)(C-8000)\ell - 3\sum_{1\leq i<j\leq k}\Cancel(D_{ij}) - \binom{k}{2}400\ell \leq (d+\varepsilon)\binom{k}{2}\frac{2C+800}{1-2d-\varepsilon}\ell.\]

Recall that $|\partial D_{ij}| \leq 2C\ell+800\ell$, so by Inequality (\ref{useful diagram inequality}),

\[\Cancel(D_{ij}) \leq \frac{d+\varepsilon}{1-2d-\varepsilon}|\partial D_{ij}| \leq \frac{2C+800}{1-2d-\varepsilon}(d+\varepsilon)\ell.\]
Thus, we get
\[k(k-2)(C-8000)\ell \leq 4\binom{k}{2}\frac{d+\varepsilon}{1-2d-\varepsilon}(2C+800)\ell+ \binom{k}{2}400\ell,\]
or
\[k(k-2) - 8\binom{k}{2}\frac{d}{1-2d} \leq 8\binom{k}{2}\frac{d+\varepsilon}{1-2d-\varepsilon} \frac{C+400}{C-8000} + \binom{k}{2}\frac{400}{C-8000} - 8\binom{k}{2}\frac{d}{1-2d}.\]

Remind that for any large $C$ and any small $\varepsilon$, the above inequality holds (a.a.s. as $\ell\to\infty$) in $G_\ell(m,d)$. Note that the right hand side of the inequality converges to $0$ as $C\to\infty$ and $\varepsilon\to 0$. If the left hand side were strictly positive, we could find a small enough $\varepsilon=\varepsilon(d)$ and a large enough $C = C(d,k)$ that contradict the inequality. Therefore,
\[k(k-2) \leq 8\binom{k}{2}\frac{d}{1-2d}.\]
Since $k\geq 1$ and $1-2d>0$, we have
\[(1-2d)(k-2)\leq 4d(k-1),\]
or
\[k(1-6d)\leq 2-8d.\]
As $d<1/6$, we obtain
\[k \leq 2 + \frac{4d}{1-6d}.\]

\end{proof}

\section{Minimal stable length at density d<1/6}

Let $G = \Pres{X}{R}$ be a hyperbolic group. The \textit{translation length} of an element $u\in G$ (acting on $\Cay(X,R)$) is defined by \[[u]:=\inf_{x\in G}|ux-x|,\]
and the \textit{stable length} is \[ [u]^\infty:=\lim_{n\to\infty}\frac{|u^nx-x|}{n}.\]

According to \cite[Ch.10 \S 6]{CDP90}, such limit exists and does not depend on the choice of $x$. Recall that $[u]\geq [u]^\infty\geq [u]-16\delta$. Recall also that $[u]^\infty>0$ if and only if $u$ is loxodromic. The \textit{minimal stable length} of the group $G = \Pres{X}{R}$ is then defined by \[\inf\SetCond{[u]^\infty}{u\in G, u \textup{ is loxodromic}}.\]

\quad

We will need the following two lemmas.

\begin{lem}\label{stable length = translation length}
    If $u\in G$ fixes a geodesic $\gamma$ and $x\in \gamma$, then $[u]^\infty = [u] = |ux-x|$.
\end{lem}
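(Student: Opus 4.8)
The plan is to exploit the fact that a group element fixing a geodesic must act on it by a translation, so that the orbit of $x$ sits in monotone order along that geodesic and distances add up exactly.

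First I would dispose of the trivial case $u=1$ (all three quantities are $0$) and assume $u\neq 1$. Since $G_\ell(m,d)$ is a.a.s.\ torsion free (Theorem \ref{inequality for diagrams}), the action of $u$ on $\Cay(X,R_\ell)$ by left multiplication is free; in particular $u^nx\neq x$ for every $n\geq 1$. The hypothesis $u\gamma=\gamma$ means $u$ restricts to an isometry of $\gamma$, and having no fixed point on $\gamma$ it cannot be an orientation-reversing involution, so $u|_\gamma$ is a genuine translation of the line $\gamma$ by the amount $t:=|ux-x|>0$. Consequently $\gamma$ is bi-infinite and the points $\dots,u^{-1}x,x,ux,u^2x,\dots$ occur consecutively along $\gamma$, each at distance $t$ from the next.

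Next, because $\gamma$ is a geodesic and $x,ux,\dots,u^nx$ lie on it in this linear order, the sub-arc of $\gamma$ from $x$ to $u^nx$ has length exactly $nt$, hence $|u^nx-x|=nt$ for all $n\geq 0$. Dividing by $n$ and letting $n\to\infty$ yields $[u]^\infty=t=|ux-x|$. Finally, the definition gives $[u]=\inf_{y\in G}|uy-y|\leq |ux-x|=t$, while the general inequality $[u]\geq [u]^\infty$ recalled before the lemma gives $[u]\geq t$; combining, $[u]=[u]^\infty=|ux-x|$.

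The only step that needs genuine care is ruling out that $u$ reverses the orientation of $\gamma$ (which would make the orbit of $x$ bounded and the statement false); this is precisely where torsion-freeness of $G_\ell(m,d)$ is used. Everything else is a direct length computation on the geodesic line, so I expect no further obstacle.
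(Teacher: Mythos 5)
Your proof is correct, and it diverges from the paper's in one substantive way. Both arguments hinge on the same key fact, namely that $u$ translates along $\gamma$ so that $|u^nx-x|=n|ux-x|$ for all $n$, which immediately gives $[u]^\infty=|ux-x|$. Where you differ is in establishing $[u]=|ux-x|$: you simply sandwich, using $[u]\leq|ux-x|$ from the definition of the infimum and $[u]\geq[u]^\infty$ from the inequality recalled just before the lemma. The paper instead gives a self-contained contradiction argument: assuming some $y$ satisfies $|uy-y|\leq|ux-x|-1$, it bounds $|u^ny-y|$ above by $n|uy-y|$ and below by $|u^nx-x|-2|x-y|=n|ux-x|-2|x-y|$ via the triangle inequality, forcing $n\leq 2|x-y|$ for all $n$. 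Your route is shorter at the cost of invoking the black-box inequality $[u]\geq[u]^\infty$; the paper's is elementary but needs the same translation identity anyway. A second point in your favour: you explicitly address the possibility that $u$ reverses the orientation of $\gamma$ (in which case $|u^nx-x|$ would be bounded and the statement false, as it is for a reflection in the infinite dihedral group), ruling it out via torsion-freeness and the freeness of the left action on vertices; the paper's proof silently assumes $u$ translates. Your phrasing ``having no fixed point on $\gamma$ it cannot be an orientation-reversing involution'' is slightly off, since an orientation-reversing isometry of a combinatorial line may have no fixed vertex (it can flip an edge) --- the correct argument is that such a $u$ would satisfy $u^2=1$ after applying freeness of the action to the vertices fixed by $u^2$, contradicting torsion-freeness --- but you clearly have this in mind, so the gap is cosmetic.
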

\begin{proof}
    Assume that there were some $y\not\in \gamma$ such that $|uy-y|\leq |ux-x|-1$ for some $x\in \gamma$. For any integer $n\in \mathbb{N}$, we have \[|u^n-y|\leq n|uy-y|\leq n|ux-x|-n.\]
    On the other hand, by the triangular inequality, \[|u^y-y|\geq |u^nx-x|-|u^nx-u^ny|-|x-y| = n|ux-x|-2|x-y|.\]
    So for any $n\in \mathbb{N}$, $2|x-y|\geq n$, which leads to a contradiction. Hence $[u] = |ux-x|$. 

    Since $|u^nx-x| = n|ux-x|$ for any $n$, we have $[u]^\infty = [u]$.
\end{proof}

\begin{lem}[{\cite[Proposition 3.1]{Del96}}]\label{existence of parallel geodesics} Let $u\in G$ be a loxodromic element. Let $u^\infty$ and $u^{-\infty}$ be two points on the boundary of $G$ that are the limit points of $u^n$ and $u^{-n}$ as $n\to\infty$. Then there exists a finite set of parallel geodesics connecting $u^\infty$ and $u^{-\infty}$ that are permuted by $u$.
\end{lem}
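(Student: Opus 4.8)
The plan is to work in the Cayley graph $\Cay(X,R)$ with its word metric, which is a proper geodesic $\delta$-hyperbolic space, and to analyse the set of all geodesics realising the two fixed points:
\[\Gamma = \SetCond{\gamma}{\gamma \textup{ is a bi-infinite geodesic from } u^{-\infty} \textup{ to } u^{\infty}}.\]
First I would record the two standard facts of hyperbolic geometry that drive everything. Since $\Cay(X,R)$ is proper and $u^{\infty}\neq u^{-\infty}$, the two boundary points are joined by at least one bi-infinite geodesic, so $\Gamma$ is nonempty; and any two elements of $\Gamma$, sharing the same ordered pair of endpoints, remain within Hausdorff distance $2\delta$ of one another by thinness of ideal bigons. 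Because $u$ acts on $\Cay(X,R)$ by an isometry fixing both $u^{\infty}$ and $u^{-\infty}$, it carries geodesics between these points to geodesics between these points, so $u\Gamma=\Gamma$; that is, $u$ permutes $\Gamma$. Under the convention of the paper, two elements of $\Gamma$ are \emph{parallel} precisely when they are disjoint, so it remains to extract from $\Gamma$ a nonempty, finite, $u$-invariant subfamily whose members are pairwise disjoint.

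The finiteness that makes such a family possible comes from a transverse counting argument. Fix $\gamma_0\in\Gamma$ and a vertex $p$ on $\gamma_0$. By the fellow-traveling bound every $\gamma\in\Gamma$ meets the ball $B(p,2\delta)$, and the members of any pairwise-disjoint subfamily of $\Gamma$ meet this ball in pairwise-disjoint vertex sets. As $G$ is finitely generated, $B(p,2\delta)$ contains only finitely many vertices, say $M$ of them, so every pairwise-disjoint subfamily of $\Gamma$ has at most $M$ members. Let $N\leq M$ be the largest size of a pairwise-disjoint subfamily of $\Gamma$; this $N$ is the bound on the number of parallel geodesics.

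The crux is to make the family $u$-invariant, and this is the step I expect to be the main obstacle. My approach is to produce a single geodesic that is periodic under the action and then take its orbit. The union $U=\bigcup_{\gamma\in\Gamma}\gamma$ lies in the $2\delta$-neighbourhood of $\gamma_0$, is $u$-invariant, and is $\langle u\rangle$-cocompact, so it is a quasi-line on which $u$ acts by a translation with finite quotient $U/\langle u\rangle$. Choosing a shortest essential closed path in this finite quotient and lifting it gives a $u^{T}$-periodic bi-infinite local geodesic of $U$; by the local-to-global principle for quasi-geodesics in a hyperbolic space, straightening it yields an honest bi-infinite geodesic $\alpha\in\Gamma$ invariant under a power $u^{T}$, with $T$ chosen minimal, whose orbit $\alpha,u\alpha,\dots,u^{T-1}\alpha$ is a finite $u$-invariant subfamily of $\Gamma$. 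The remaining difficulty, which is exactly where the hyperbolic geometry must be used carefully, is to guarantee that the translates in this orbit are pairwise disjoint: I would arrange this by selecting $\alpha$ to be outermost among the $u^{T}$-periodic geodesics, so that two distinct translates coming within $2\delta$ of each other would bound a bigon of diameter $O(\delta)$ and could be shortened, contradicting the minimality used to choose $\alpha$. Granting disjointness, the orbit of $\alpha$ is the asserted finite set of pairwise parallel geodesics connecting $u^{\infty}$ and $u^{-\infty}$ that is permuted by $u$.
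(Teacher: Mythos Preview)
The paper does not prove this lemma; it is quoted from \cite{Del96} as an external input, so there is no in-paper argument to compare against and your proposal must be judged on its own terms.

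Your outline has two genuine gaps. First, in producing a periodic geodesic: you lift a shortest essential closed loop in $U/\langle u\rangle$ to a $u^{T}$-periodic path and then ``straighten'' it to a geodesic $\alpha\in\Gamma$, but the lift is a local geodesic only for the intrinsic metric on the subgraph $U$, not for $\Cay(X,R)$, and in any case straightening (passing from a quasi-geodesic to a nearby geodesic via the Morse lemma) is not $u^{T}$-equivariant---it produces \emph{some} nearby geodesic, not a $u^{T}$-invariant one. The honest route is to show that $\Gamma$ has only finitely many $\langle u\rangle$-orbits, so that every $\gamma\in\Gamma$ is automatically periodic; this uses local finiteness together with cocompactness of the $\langle u\rangle$-action on the tube around $\gamma_0$, but it needs a real argument rather than a straightening trick. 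Second, disjointness: you appeal to an ``outermost'' choice of $\alpha$ and a bigon-shortening contradiction, but no order making ``outermost'' meaningful has been defined, and two distinct geodesics in $\Gamma$ meeting at a vertex do not bound a shortenable bigon (both are already geodesics). Distinct translates $u^{i}\alpha$ and $u^{j}\alpha$ can genuinely share vertices, and obtaining a pairwise disjoint $u$-invariant family from a periodic $\alpha$ requires an explicit $u$-equivariant surgery at the crossings, not the extremality sketch you give.
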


\quad

\begin{thm} \label{inj rad at 1/6} If $d<1/6$, then a.a.s. the minimal stable length of $G_\ell(m,d)$ is $1$.
\end{thm}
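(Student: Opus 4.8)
The plan is to establish separately that $\inf_u [u]^\infty\le 1$ and $\inf_u [u]^\infty\ge 1$, the infimum running over loxodromic $u$ in $G:=G_\ell(m,d)$. Everything takes place in the a.a.s.\ regime where $G$ is torsion-free, non-elementary and $\delta$-hyperbolic (Theorem~\ref{inequality for diagrams}), and where $\ell$ exceeds any quantity that has been fixed in terms of $d$ alone --- legitimate since the final claim is an a.a.s.\ statement as $\ell\to\infty$. Let $k=k(d)$ be the integer bound furnished by Theorem~\ref{number of parallel geodesics}, so $k\le 2+\tfrac{4d}{1-6d}<k+1$, and let $C=C(d)$ be large enough that Theorem~\ref{number of parallel geodesics}, applied with the parameter $k+1$, rules out the existence of $k+1$ pairwise parallel geodesics of length $\ge C\ell$; hence a.a.s.\ every family of pairwise parallel geodesics of length $\ge C\ell$ has at most $k$ members.

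For the upper bound I would test $u=x_1$. One computes $[x_1]=\inf_{g\in G}|g^{-1}x_1g|_G=1$: it is $\le 1$ by taking $g=e$, and $\ge 1$ since $x_1\neq e$ in $G$ (immediate from the isoperimetric inequality of Theorem~\ref{inequality for diagrams}) while every nontrivial element has word length $\ge 1$. Being nontorsion, $x_1$ is loxodromic. For $\ell$ large one has $[x_1]=1<\tfrac{1-3d-3\varepsilon}{2k}\ell$, so Remark~\ref{rem stable length} gives $[x_1]^\infty=[x_1]=1$ (alternatively, apply Lemma~\ref{stable length = translation length} to the $x_1$-invariant line $\dots x_1x_1x_1\dots$, which is a.a.s.\ geodesic). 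Thus the minimal stable length is $\le 1$.

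For the lower bound, fix a loxodromic $u\in G$; then $u\neq e$, so $[u]\ge 1$, and I would distinguish two cases. If $[u]\ge k$: Lemma~\ref{existence of parallel geodesics} yields a finite family of $N$ pairwise parallel bi-infinite geodesics joining $u^{\infty}$ and $u^{-\infty}$; since bi-infinite geodesics sharing their endpoints fellow-travel, one may cut out synchronised sub-geodesics of length $\ge C\ell$ that are pairwise parallel in the finite sense, and Theorem~\ref{number of parallel geodesics} then forces $N\le k$. By \cite[Proposition~3.1]{Del96} this gives $[u]^\infty\ge [u]/N\ge [u]/k\ge 1$. If $[u]<k$: for $\ell$ large we have $k\le\tfrac{1-3d-3\varepsilon}{2k}\ell$, so $[u]<\tfrac{1-3d-3\varepsilon}{2k}\ell$ and Remark~\ref{rem stable length} gives $[u]^\infty=[u]\ge 1$. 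Either way $[u]^\infty\ge 1$, so the minimal stable length is $\ge 1$, and combined with the previous paragraph it equals $1$ a.a.s.

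The substantive input is entirely contained in Theorem~\ref{number of parallel geodesics} and in the estimate behind Remark~\ref{rem stable length}; what remains here is bookkeeping with the a.a.s.\ qualifiers and the $d$-dependent constants. I expect the two points needing genuine care to be: that the threshold $\tfrac{1-3d-3\varepsilon}{2k}\ell$ of Remark~\ref{rem stable length} eventually dominates the fixed constant $k$ (so the two cases above are exhaustive), and that the bound on finite parallel geodesics really does transfer to the bi-infinite family of Lemma~\ref{existence of parallel geodesics}, i.e.\ that the fellow-traveling sub-arcs of length $\ge C\ell$ can be taken with $10\delta$-close endpoints so as to count as parallel for Theorem~\ref{number of parallel geodesics}.
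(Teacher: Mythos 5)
There is a genuine gap, and it is a circularity. Both places where your argument has to do real work in the regime $[u]<k$ (and also your first route to the upper bound) invoke Remark~\ref{rem stable length}. But that remark is not an independent input: the paper derives it \emph{from the proof of Theorem~\ref{inj rad at 1/6}} (it begins ``According to the proof above\dots''), and its content --- that a loxodromic $u$ with $[u]<\frac{1-3d-3\varepsilon}{2k}\ell$ actually \emph{fixes} one of the geodesics joining $u^{\pm\infty}$ rather than merely permuting a family of $N\ge 2$ of them --- is precisely the hard step. Concretely, the dangerous scenario for the lower bound is an element $u$ with $[u]=1$ that cyclically permutes $N\ge 2$ parallel axes, so that $u^N$ fixes each axis and $[u]^\infty=[u^N]/N$ could a priori be $1/2$ or $1/3$. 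Your case $[u]\ge k$ (Delzant's $[u]^\infty\ge[u]/N$ together with Theorem~\ref{number of parallel geodesics}) does not touch this scenario, and your case $[u]<k$ assumes it away by citing the remark. Nothing in your proposal rules it out.

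What the paper does at this point, and what is missing from your proposal, is a separate geometric argument: for $u$ with $[u]\le 20\delta$ permuting $N\ge 2$ parallel geodesics, it builds a band diagram $D$ between $\gamma$ and $u\gamma$ of width about $C\ell$, glues $D$ to its translate $u^N D$ along $\gamma$ and $u\gamma$ to form an annular $2$-complex $Y$, and shows via the isoperimetric inequality that $Y$ cannot be reduced; a reducible pair $f,g$ then forces two disjoint translates $[s,s']$ and $[u^k s, u^k s']$ of a long arc (length $\ge(1-3d-3\varepsilon)\ell/2$, using the $1$-layer structure from Theorem~\ref{parallel diagram in one sixth}) to sit on the same geodesic, whence $[u^N]\ge(1-3d-3\varepsilon)\ell/2$ and $[u]^\infty\ge\frac{1-3d-3\varepsilon}{2N}\ell\gg 1$. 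Only after this does the dichotomy ``either $u$ fixes an axis and $[u]^\infty=[u]\ge 1$, or $[u]^\infty$ is linear in $\ell$'' become available, which is what Remark~\ref{rem stable length} records. Your remaining ingredients are fine and close to the paper's (the upper bound needs nothing more than $[x_1]^\infty\le[x_1]\le 1$, and the transfer of Theorem~\ref{number of parallel geodesics} to the bi-infinite family is the standard fellow-travelling point you already flagged), but as written the proof assumes its key step.
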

\begin{proof}
    Recall that for any $d<1/2$, a.a.s. $G_\ell(m,d)$ is hyperbolic and torsion free, so any non-trivial element is loxodromic. At $d<1/6$, the hyperbolicity constant is $\delta\leq 10\ell$. We shall prove that for any $u\in G_\ell(m,d)$ with $u\neq 1$, we have $[u]^\infty\geq 1$. 
    
    Since $[u]^\infty$ is invariant by conjugations, we can replace $u$ within its conjugates so that $[u] = |u|$. If $[u]\geq 20\delta$, then $[u]^\infty\geq [u]-16\delta\geq 1$. Suppose in the following that $|u| = [u]\leq 20\delta\leq 200\ell$. 
    
    Consider a finite set of parallel geodesics connecting $u^\infty$ and $u^{-\infty}$ that are permuted by $u$ (Lemma \ref{existence of parallel geodesics}). Let $\gamma$ be one such geodesic, there is a positive integer $k$ such that $u^k\gamma = \gamma$. So $\gamma, u\gamma,\dots, u^{k-1}\gamma$ is a set of parallel geodesics permuted by $u$, each fixed by $u^k$. By Theorem \ref{number of parallel geodesics}, when $d<1/6$, we have $k\leq 2+\frac{4d}{1-6d}$. In the case $k=1$, we have $[u]^\infty = [u] \geq 1$ by Lemma \ref{stable length = translation length}.
    
    Suppose that $k\geq 2$. Let $x,y$ be vertices on $\gamma$ with $|x-y| = C\ell$ where $C = C(m,d)$ is a large number to be determined. Choose geodesics $[x,ux]$ and $[y,uy]$ so that their intersections with $\gamma$ and with $u\gamma$ are connected. Let $D$ be the van Kampen diagram bounded by the geodesics $[x,y]$, $[y,uy]$, $[uy,ux]$ and $[ux,x]$, with the associated map $\varphi : D\to \Cay(G_\ell(m,d))$. It is a disk diagram with eventually $4$ spurs. Denote by $\mathring D$ the disk part of $D$. We have $2C\ell-2|u|\leq |\partial \mathring D|\leq 2|u|+2C\ell$, so $|D|\geq |\partial \mathring D|/\ell\geq 2C-800$. Together with the isoperimetric inequality $\partial \mathring D\leq (1-2d-\varepsilon)|D|\ell$ with $d<1/6$ and some small enough $\varepsilon$, we get    
    \[2C-800\leq |D|\leq 4C+800.\]
    
    Let $D'$ be the copy of $D$ that is bounded by the geodesics $u^k[x,y]$, $u^k[y,uy]$, $u^k[uy,ux]$ and $u^k[ux,x]$, with the associated map $\varphi': D'\to \Cay(G_\ell(m,d))$, so that $\varphi'(D') = u^k \varphi(D)$. For any face $f\in D$, there is a corresponding face $f'\in D'$ such that $\varphi'(f') = u^k \varphi (f)$. Since $u^k$ fixes $\gamma$ and $u\gamma$, $D'$ is also bounded by $\gamma$ and $u\gamma$.

    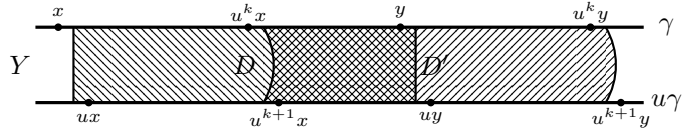
\begin{figure}[h]
    \begin{center}
    \begin{tikzpicture}
    \fill [gray!40, pattern=north east lines] (3.5,0) -- (-1,0) -- (-1,1) -- (3.5,1);
    \fill [gray!40, pattern=north east lines] (3.5,1) arc [start angle=30, end angle=-30, radius=1];
    \fill [white] (-1,1) arc [start angle=30, end angle=-30, radius=1];
    \fill [gray!40, pattern=north west lines] (-3.5,0) -- (1,0) -- (1,1) -- (-3.5,1);

    \node at (-1.25,0.5) {$D$};
    \node at (1.25,0.5) {$D'$};
    \node at (-4.2,0.5) {$Y$};

\draw [very thick] (-4,0) -- (4,0);
\draw [very thick] (-4,1) -- (4,1);
\node at (4.3,0) {$u\gamma$};
\node at (4.3,1) {$\gamma$};

\draw [thick] (-3.5,1) -- (-3.5,0);
\draw [thick] (-1,1) arc [start angle=30, end angle=-30, radius=1];
\draw [thick] (1,0) -- (1,1);
\draw [thick] (3.5,1) arc [start angle=30, end angle=-30, radius=1];

\fill (-0.8,0) circle (0.05);
\fill (1.2,0) circle (0.05);
\fill (-3.3,0) circle (0.05);
\fill (3.7,0) circle (0.05);
\fill (-1.2,1) circle (0.05);
\fill (0.8,1) circle (0.05);
\fill (-3.7,1) circle (0.05);
\fill (3.3,1) circle (0.05);
\node at (-3.7,1.2) {\scriptsize{$x$}};
\node at (3.3,1.2) {\scriptsize{$u^ky$}};
\node at (-1.2,1.2) {\scriptsize{$u^kx$}};
\node at (0.8,1.2) {\scriptsize{$y$}};
\node at (-3.3,-0.2) {\scriptsize{$ux$}};
\node at (3.7,-0.2) {\scriptsize{$u^{k+1}y$}};
\node at (-0.8,-0.2) {\scriptsize{$u^{k+1}x$}};
\node at (1.2,-0.2) {\scriptsize{$uy$}};
\end{tikzpicture}
\end{center}
    \caption{The diagram $Y$ obtained by attaching $D$ and $D'$ along $\gamma$ and $u\gamma$.}
\end{figure}

    We choose $C\geq 200k$, so that $|x-y| \geq 200k\ell\geq |u^k|$, and that $u^kx$ lie between $x$ and $y$. Let $Y$ be the diagram obtained by attaching $D$ and $D'$ by their associated maps, along the two geodesics $\gamma$ and $u\gamma$. The 2-complex $Y$ is a annular diagram with $|Y|=2|D|\geq 4C-1600$ and $|\partial Y|\leq 4|u|+4|u^k|\leq 800(k+1)\ell$. If $Y$ were reduced, then by the isoperimetric inequality, 
    \[800(k+1)\ell\geq (1-2d-\varepsilon)(4C-1600)\ell.\]
    
    We choose $C\geq\frac{1000k}{1-2d}$ so that this inequality does not hold, so $Y$ must be non-reduced. Since $D$ and $D'$ are reduced, the reductions of $Y$ must happen on the two geodesics $\gamma$ and $u\gamma$.

    Let $f\in D, g'\in D'$ be a reducible pair of faces in $Y$, we have $\varphi(f) = \varphi'(g')$ in the Cayley complex. Let $g$ be the face in $D$ that corresponds to $g'$ in $D'$, which gives $u^k\varphi(g) = \varphi'(g') = \varphi(f)$. Since $G_\ell(m,d)$ is torsion free, the faces $f$ and $g$ must be different in $D$.
    
    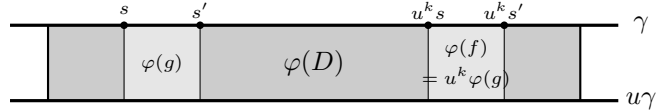
\begin{figure}[h]
\begin{center}
\begin{tikzpicture}
\fill [gray!40] (-3.5,0) -- (3.5,0) -- (3.5,1) -- (-3.5,1);
\fill [gray!20] (-1.5,0) -- (-2.5,0) -- (-2.5,1) -- (-1.5,1);
\fill [gray!20] (1.5,0) -- (2.5,0) -- (2.5,1) -- (1.5,1);
\node at (0,0.5) {$\varphi(D)$};
\node at (-2,0.5) {\scriptsize{$\varphi(g)$}};
\node at (2,0.7) {\scriptsize{$\varphi(f)$}};
\node at (2,0.3) {\scriptsize{$= u^k\varphi(g)$}};

\draw [very thick] (-4,0) -- (4,0);
\draw [very thick] (-4,1) -- (4,1);
\node at (4.3,0) {$u\gamma$};
\node at (4.3,1) {$\gamma$};

\node at (-2.5,1.2) {\scriptsize{$s$}};
\node at (-1.5,1.2) {\scriptsize{$s'$}};
\node at (1.5,1.2) {\scriptsize{$u^ks$}};
\node at (2.5,1.2) {\scriptsize{$u^ks'$}};
\fill (-2.5,1) circle (0.05);
\fill (-1.5,1) circle (0.05);
\fill (1.5,1) circle (0.05);
\fill (2.5,1) circle (0.05);

\draw [thick] (-3.5,0) -- (-3.5,1);
\draw [thick] (3.5,0) -- (3.5,1);
\draw (2.5,0) -- (2.5,1);
\draw (1.5,0) -- (1.5,1);
\draw (-2.5,0) -- (-2.5,1);
\draw (-1.5,0) -- (-1.5,1);

\end{tikzpicture}
\end{center}
    \caption{The image of the diagram $D$.}
\end{figure}

    By Lemma \ref{parallel diagram in one sixth}, as $d<1/6$, $D$ has only one layer of faces. Apply the isoperimetric inequality on the subdiagram of $D$ containing $g$ and its two neighbouring faces, we have $|\Int(g)|\leq 3(d+\varepsilon)\ell$. So either $|\varphi(g)\cap \gamma| = |g\cap \gamma|\geq(1-3d-3\varepsilon)\ell/2$, or $|g\cap u\gamma| = |\varphi(g)\cap u\gamma|\geq(1-3d-3\varepsilon)\ell/2$. Suppose that we are in the first case.
    
    Denote the segment $\varphi(g)\cap \gamma$ by $[s, s']$. Since $u^k$ fixes $\gamma$ and $u^k\varphi(g)= \varphi(f)$, we have $\varphi(f)\cap \gamma = [u^ks,u^ks']$. Because $f\neq g$, these two sub-geodesics have no common edges. So $|s-u^ks|\geq |s-s'|\geq (1-3d-3\varepsilon)\ell/2$. Hence $[u^k]\geq (1-3d-3\varepsilon)\ell/2$. In addition, as $u^k$ fixes $\gamma$, for any $i\geq 1$, $[u^{ik}] = i[u^{k}]\geq \frac{(1-3d-3\varepsilon)i}{2}\ell$. So we have $[u]^\infty \geq \frac{1-3d-3\varepsilon}{2k}\ell$, which is a lot bigger than $1$ when $\ell$ is large enough.
\end{proof}

\quad

\begin{rem}\label{rem stable length}
    According to the proof above, at $d<1/6$, any loxodromic element $u$ of $G_\ell(m,d)$ with $[u]<\frac{1-3d-3\varepsilon}{2k}\ell$ fixes a geodesic connecting $u^\infty$ and $u^{-\infty}$. Therefore, by Lemma \ref{stable length = translation length}, we have $[u]=[u]^\infty$.

    We do not have $[u]=[u]^\infty$ for long enough elements. For instance, at $d<1/4$, we can construct an element $u$ with $|u|=\frac{\ell}{2}$ and $[u]^\infty\leq (1/2-d+\varepsilon)\ell$: By the intersection formula, a.a.s. there is a relation $r$ of $G_\ell(m,d)$ written as $r = svsw$ where $|s|=(d-\varepsilon)\ell$ and $|v|=|w|=(1/2-d-\varepsilon)\ell$. Since $d<1/4$, every trivial word of $G_\ell(m,d)$ has length at least $\ell$. So $u = sv$ is a geodesic word of length $\ell/2$.

    Construct a reduced van Kampen diagram with $2$ faces, both labeled by $r$ and share a common segment of word $u$ (but at different positions). This proves that $[u^2]\leq |v|+|w|\leq (1-2d+2\varepsilon)\ell$, which gives $[u]^\infty\leq (1/2-d+\varepsilon)\ell$.
    
    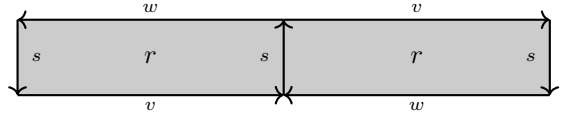
\begin{figure}[h]
\begin{center}
\begin{tikzpicture}
\fill [gray!40] (-3.5,0) -- (3.5,0) -- (3.5,1) -- (-3.5,1);
\node at (-1.75,0.5) {$r$};
\node at (1.75,0.5) {$r$};
\node at (-3.25,0.5) {\scriptsize{$s$}};
\node at (-0.25,0.5) {\scriptsize{$s$}};
\node at (3.25,0.5) {\scriptsize{$s$}};
\node at (-1.75,1.15) {\scriptsize{$w$}};
\node at (-1.75,-0.15) {\scriptsize{$v$}};
\node at (1.75,1.15) {\scriptsize{$v$}};
\node at (1.75,-0.15) {\scriptsize{$w$}};

\draw [->, thick] (-3.5,0) -- (0,0);
\draw [<-, thick] (0,0) -- (3.5,0);
\draw [<-, thick] (-3.5,1) -- (0,1);
\draw [->, thick] (0,1) -- (3.5,1);
\draw [<-, thick] (-3.5,0) -- (-3.5,1);
\draw [<-, thick] (3.5,0) -- (3.5,1);
\draw [->, thick] (0,0) -- (0,1);

\end{tikzpicture}
\end{center}
    \caption{An element $u = sv$ with $[u]^\infty < [u]$.}
\end{figure}
\end{rem}

\printbibliography[heading=bibintoc]

@book{CDP90,
	title = {Géométrie et théorie des groupes : les groupes hyperboliques de Gromov},
	author = {Coornaert, Michel and Delzant, Thomas and Papadopoulos, Athanase},
    series = {Lecture Notes in Mathematics 1441},
    publisher = {Springer Berlin Heidelberg},
    year = {1990},
    doi = {10.1007/BFb0084913},
}

@inbook{Gro87,
    title={Hyperbolic groups},
    author={Gromov, Mikhael},
    editors = {Gersten, S.M.},
    booktitle = {Essays in Group Theory},
    year = {1987},
    publisher = {Springer New York},
    series = {Mathematical Sciences Research Institute Publications, vol 8.},
    pages = {75--263},
    doi = {10.1007/978-1-4613-9586-7_3},
}

@book{LS77,
	title = {Combinatorial Group Theory},
	author = {Lyndon, Roger C. and Schupp, Paul E.},
    publisher = {Springer-Verlag},
    year = {1977},
    adress = {New York},
}

@article{vK33,
    title = {On Some Lemmas in the Theory of Groups},
    author = {van Kampen, Egbert R.},
    journaltitle = {American Journal of Mathematics},
    volume = {55},
    number = {1},
    pages = {268-273},
    year = {1933},
    doi = {10.2307/2371129},
}

@article{GM18,
    title = {Random triangular Burnside groups},
    author = {Gruber, Dominik and Mackay, John},
    journal = {Israel Journal of Mathematics},
    volume = {244},
    pages = {75-94},
    doi = {10.1007/s11856-021-2170-9},
    year = {2021},
}

@article{Oll07,
	title = {Some small cancellation properties of random groups},
	volume = {17},
	doi = {10.1142/S021819670700338X},
	pages = {37--51},
	number = {1},
	journaltitle = {International Journal of Algebra and Computation},
	shortjournal = {Int. J. Algebra Comput.},
	author = {Ollivier, Yann},
	date = {2007},
	note = {Publisher: World Scientific Publishing Co.},
}

@article{OW11,
    author = {Ollivier, Yann and Wise, Daniel T.},
    journal = {Transactions of the American Mathematical Society},
    number = {9},
    pages = {4701--4733},
    publisher = {American Mathematical Society},
    title = {Cubulating random groups at density less than 1/6},
    volume = {363},
    year = {2011},
    doi = {10.1090/S0002-9947-2011-05197-4},
}

@inbook{Gro93,
	title = {Finitely presented groups},
	pages = {269--282},
	booktitle = {Asymptotic invariants of infinite groups. Geometric Group Theory},
	volume = {2},
	publisher = {London Math. Soc. Lecture Note Ser. 182},
	author = {Gromov, Mikhael},
	date = {1993},
	doi = {10.1017/CBO9780511629273},
}

@unpublished{Tsa22b,
    author = {Tsai, Tsung-Hsuan},
    date = {2022},
    title = {Phase transition for the existence of van Kampen 2-complexes in random groups},
}

@article{Del96,
	title = {Sous-groupes distingués et quotients des groupes hyperboliques},
	author = {Delzant, Thomas},
    journaltitle = {Duke Mathematical Journal},
    volume = {83},
    number = {3},
    year = {1996},
    pages = {661-682},
    doi = {DOI: 10.1215/S0012-7094-96-08321-0},
}

@article{Cou18,
 author = {Coulon, Rémi},
 eprint = {1810.08372v2},
 eprintclass = {math.GR},
 eprinttype = {arxiv},
 journal = {arXiv.org},
 title = {Infinite periodic groups of even exponents},
 url = {http://arxiv.org/abs/1810.08372v2},
 year = {2018}
}

@article{RS95,
    title = {Canonical representatives and equations in hyperbolic groups},
    author = {Rips, E. and Sela, Z.},
    journaltitle = {Inventiones mathematicae},
    volume = {120},
    pages = {489–512},
    year = {1995},
    doi = {10.1007/BF01241140},
}

@article{LS24,
    author = {Lazarovich, Nir and Sageev, Michah},
    title = {Globally stable cylinders for hyperbolic CAT(0) cube complexes},
    journal = {Groups Geom. Dyn.},
    number = {1},
    volume = {18},
    pages = {203–211},
    year = {2024},
    doi = {DOI 10.4171/GGD/744},
}

\end{document}